\definecolor{darkblue}{rgb}{0.0, 0.0, 0.55}
\renewcommand{\qedsymbol}{\rule[.12ex]{1.2ex}{1.2ex}}
\renewcommand{\subset}{\subseteq}
\renewcommand{\emptyset}{\varnothing}
\DeclareMathOperator{\tr}{tr}
\newcommand{\simu}{\stackrel u{\sim}}
\newcommand{\mycontentsbox}{%
{\centerline{NOT FOR PUBLICATION}
\addtolength{\parskip}{-2.3pt}
\small\tableofcontents}}
\def\enddoc@text{\ifx\@empty\@translators \else\@settranslators\fi
\ifx\@empty\addresses \else\@setaddresses\fi
\newpage\mycontentsbox\newpage\printindex}
\newtheorem{theorem}{Theorem}[section]
\newtheorem{cor}[theorem]{Corollary}
\newtheorem{lemma}[theorem]{Lemma}
\newtheorem{prop}[theorem]{Proposition}
\newtheorem{remark}[theorem]{Remark}
\newtheorem{example}[theorem]{Example}
\newtheorem{thm}[theorem]{Theorem}
\newtheorem{lem}[theorem]{Lemma}
\newtheorem*{lemma*}{Lemma}
\def\beq{\begin{equation}}
\def\eeq{\end{equation}}
\def\ben{\begin{enumerate}}
\def\een{\end{enumerate}}
\newcommand{\pre}[1]{{#1}^{\rm re}}
\newcommand{\pim}[1]{{#1}^{\rm im}}
\numberwithin{equation}{section}
\def\cD{ {{\mathcal D}}}
\def\cN{ {{\mathcal N}}}
\def\cP{ {{\mathcal P}}}
\def\cX{ {{\mathcal X}}}
\def\cEE{ {{\mathcal E}}}
 \def\bbF{ {\mathbb C}}
\def\R{ {\mathbb{R}} }
\def\C{ {\mathbb{C}} }
\def\F{\mathbb{C}}
\def\N{ {\mathbb{N}} }
\def\cA{ {\mathcal A} }
\def\cB{ {\mathcal B} }
\def\cC{ {\mathcal C} }
\def\cD{ {\mathcal D} }
\def\cL{ {\mathcal L} }
\def\cN{ {\mathcal N} }
\def\cP{{\mathcal P}}
\def\cS{{\mathcal S} }
\def\cT{{\mathcal T}}
\def\scriptsize{}
 \def\la{{\lambda}}
\def\mfT{\mathfrak{T}}
\def\cS{{\mathcal S}}
\newcommand{\df}[1]{{\bf{#1}}{\index{#1}}}
\def\La{\Lambda}
\def\ot{{\otimes}}
\def\Cr{C_1}
\def\Cl{C_2}
\def\oX{X^{\mbox{\rm{b}}}}
\def\be{\mathbf{e}}
\def\hh{T}
\def\hhs{\mathfrak{T}}
\def\size{d}
\def\Fdd{M_d(\mathbb C)}
\def\cs{\stackrel{\rm c.s.}{\sim}}
\title{Circular Free Spectrahedra}
\author[E. Evert]{Eric Evert}
\address{Eric Evert, Department of Mathematics\\
  University of California \\
  San Diego
   }
   \email{eevert@ucsd.edu}
\author[J.W. Helton]{J. William Helton${}^1$}
\address{J. William Helton, Department of Mathematics\\
  University of California \\
  San Diego}
\email{helton@math.ucsd.edu}
\thanks{${}^1$Research supported by the NSF grant
DMS 1201498, and the Ford Motor Co.}
\author[I. Klep]{Igor Klep${}^{2}$}
\address{Igor Klep, Department of Mathematics, 
The University of Auckland, New Zealand}
\email{igor.klep@auckland.ac.nz}
\thanks{${}^2$Supported by the Marsden Fund Council of the Royal Society of New Zealand. Partially supported by the Slovenian Research Agency grants P1-0222 and L1-6722. }
\author[S. McCullough]{Scott McCullough${}^3$}
\address{Scott McCullough, Department of Mathematics\\
  University of Florida\\ Gainesville %\\
   % Box 118105\\
   %  Gainesville, FL 32611-8105\\
   %  USA
   }
   \email{sam@math.ufl.edu}
\thanks{${}^3$Research supported by the NSF grant DMS-1361501}
\subjclass[2010]{47L07, 52A05 (Primary); 46N10, 46L07, 32F17 (Secondary)}
\keywords{linear matrix inequality, LMI, spectrahedron, matrix convex set, free convexity, circular domain, invariant polynomial, free analysis}
\begin{document}

\setcounter{tocdepth}{3}
\contentsmargin{2.55em} 
\dottedcontents{section}[3.8em]{}{2.3em}{.4pc} 
\dottedcontents{subsection}[6.1em]{}{3.2em}{.4pc}
\dottedcontents{subsubsection}[8.4em]{}{4.1em}{.4pc}

\begin{abstract}
This paper considers matrix convex sets   invariant under several types of rotations. 
It is known that matrix convex sets that are free semialgebraic 
are solution sets of Linear Matrix Inequalities (LMIs); they are called free spectrahedra.
We  classify all free spectrahedra that are circular, that is, closed under multiplication  by $e^{i t}$: 
up to unitary equivalence, the coefficients of a minimal LMI defining a circular free spectrahedron have a common block decomposition in which the only nonzero blocks are on the superdiagonal.\looseness=-1

A matrix convex set is called free circular if it is closed under left multiplication by unitary matrices. 
As a consequence of 
a Hahn-Banach  separation theorem for free circular matrix convex sets,
we show  the coefficients of a minimal LMI defining a free circular free spectrahedron  have, up to unitary equivalence, a block decomposition as above  with only two
blocks. 

This paper also gives a classification of those  noncommutative polynomials invariant
 under conjugating each coordinate  by a different unitary matrix. 
Up to unitary equivalence 
such a polynomial must be a direct sum of univariate polynomials.
\end{abstract}

%%%%%%%%%%%%%%% text only abstract
\iffalse
This paper considers matrix convex sets   invariant under several types of rotations. It is known that matrix convex sets that are free semialgebraic are solution sets of Linear Matrix Inequalities (LMIs); they are called free spectrahedra. We  classify all free spectrahedra that are circular, that is, closed under multiplication by exp(i t): up to unitary equivalence, the coefficients of a minimal LMI defining a circular free spectrahedron have a common block decomposition in which the only nonzero blocks are on the superdiagonal.

A matrix convex set is called free circular if it is closed under left multiplication by unitary matrices. As a consequence of a Hahn-Banach separation theorem for free circular matrix convex sets, we show  the coefficients of a minimal LMI defining a free circular free spectrahedron  have, up to unitary equivalence, a block decomposition as above  with only two blocks.

This paper also gives a classification of those  noncommutative polynomials invariant under conjugating each coordinate  by a different unitary matrix. Up to unitary equivalence such a polynomial must be a direct sum of univariate polynomials.
\fi
%%%%%%%%%%%%

\maketitle

\section{Introduction}
 For square matrices $A,B $,  write
 $A\preceq B$ (resp. $A \prec B$) to express that $B-A$
  is positive semidefinite (resp. positive definite). 
Given a $g$-tuple $A=(A_1, \dots A_g) \in M_d(\C)^g$, let $\Lambda_A(x)$ denote the linear matrix polynomial
\beq\label{eq:homPencil}
 \Lambda_A(x) =\sum_{j=1}^g A_j x_j
\eeq
 and let $L_A$ denote  the \df{(symmetric monic) linear pencil} 
\beq\label{eq:monPencil}
L_A (x)=I_d- \sum_{j=1}^g A_j x_j - \sum_{j=1}^g A_j^* x_j^* =I_d -\Lambda_A(x)-\Lambda_A(x)^* .
\eeq
The \df{spectrahedron} $\mathscr S_{A}$ is the set of all
 $x \in \C^g$ satisfying 
the \df{linear matrix inequality} (\df{LMI})
 $L_A (x) \succeq 0$. 
 Spectrahedra and LMIs are ubiquitous 
 in control theory \cite{SIG97,BGFB94}
 and optimization \cite{BPR13}.
 Indeed LMIs are at the heart of the subject called semidefinite programming.
 
 This article investigates spectrahedra from the perspective of the emerging areas of
free convexity
\cite{DDSS,Eff09,EW,Far12,HKM16,WW99,Wit84,Zal}
 and free analysis
 \cite{AM14,BVM,HKM12,KVV14,KS,Pop08,Tay72,Voi10}.
In free analysis we are interested in matrix variables 
and evaluate a linear pencil on 
$g$-tuples 
 $X=(X_1, \dots, X_g) \in M_n(\C)^g$ according to the formula
\beq\label{eq:evalPencil}
L(X)=I_d \ot I_n - \sum_{j=1}^g A_j \otimes X_j 
- \sum_{j=1}^g A_j^* \otimes X_j^*.
\eeq
For positive integers $n$, let
\beq\label{eq:LMIdomain}
\cD_{A}(n)=\big\{X\in M_n(\C)^g : L_A(X)\succeq0 \big\}.
\eeq
The sequence  $\cD_A = (\cD_A(n))_n$ is called  a \df{free spectrahedron}. It is the set of all solutions to the 
ampliated
LMI corresponding to
$L_A$. In particular, $\cD_A(1)=\mathscr S_A$.
Free spectrahedra are closely connected with operator systems for which
 \cite{FP12,KPTT,Arv08} are a few recent references.  
In a different direction they provide a model for convexity phenomena
in  linear system engineering problems described entirely by 
signal flow diagrams \cite{dOHMP09}.

 The main results of this article characterize 
free spectrahedra   and free polynomials  that are invariant under various natural types of circular symmetry.
  A core motivation for this article 
 comes from
 classical several complex variables where the  study of maps on various
 types of domains is a major theme.  There
 an important  class is the {\it circular 
 domains}. These behave very well under bianalytic mappings
 as described e.g.~by  Braun-Kaup-Upmeier \cite{BKU78}.\looseness=-1

\subsection{Main Results} 
\label{sec:mainpreview}
This subsection contains a  summary of the main results of the paper. 
  Let $M(\bbF)^g$ denote the sequence $(M_n(\bbF)^g)_{n\in\N}$ of $g$-tuples of $n\times n$ matrices with entries from $\bbF$. A subset $\Gamma \subset M(\bbF)^g$ is a sequence $(\Gamma(n))_n$ where $\Gamma(n) \subset M_n(\bbF)^g$. 

\subsubsection{Rotationally invariant free spectrahedra}
A subset $\cD \subset  M(\C)^g$ is \df{circular} 
 if $Z \in \cD$ implies $e^{i t} Z \in \cD$ for all $t\in\R$ and  is \df{free circular}  if $UZ \in \cD$ for each $n$, each $Z \in \cD(n)$, and each $n \times n$ unitary matrix $U \in M_n(\C)$. Here $U Z= ( U Z_1, \dots, U Z_g)$.
 Geometric and analytic properties of 
circular subsets of $\C^n$ and their generalizations, such as Reinhardt domains, are heavily investigated in several complex variables \cite{Kra01}, cf.~\cite{BKU78}.

Given a tuple $A\in M_d(\C)^g$, if there is an orthogonal decomposition of $\C^d$ such that with respect to this decomposition $A=A^1 \oplus A^2,$ then $L_A(x)=(L_{A^1} \oplus L_{A^2}) (x).$ In this case each  $L_{A^i}$ is a \df{subpencil} of $L_A$. If $\cD_A=\cD_{A^i},$ then $L_{A^i}$ is a \df{defining subpencil} for $\cD_A$. Say the pencil $L_A$ is a \df{minimal defining pencil} for $\cD_A$ if no proper subpencil of $L_A$ is a defining subpencil for $D_A$.

Theorem \ref{theorem:circularmain}  below
says
the  tuple $A$ in a minimal defining pencil $L_A$ of a circular free spectrahedron is
(up to unitary equivalence)
block superdiagonal. It also says, if the domain is free circular, then there are just two blocks. We refer to such a domain as a \df{matrix pencil ball}. 

\begin{theorem} 
\label{theorem:circularmain}
 Let  $A\in M_d(\C)^g$  and 
 suppose  $L_A$ is a minimal defining pencil for $\cD_A$.
 \ben[label={\rm(\arabic*)}]
 \item
 \label{it:circ} 
    Assume $A$ has no reducing subspace. 
   The free spectrahedron
  $\cD_{A}$ is circular if and only if 
  there is an orthogonal decomposition
  of $\mathbb C^d$ such that, with respect to this decomposition, the $A_s$ have the block decomposition
\beq
\label{eq:1offdiag}
  A_s =\begin{pmatrix} 0 & A_s(1) & 0 &  \cdots  & 0 \\
       0 & 0 & A_s(2) & \ddots & 0\\
       \vdots & \vdots & \ddots & \ddots &\vdots \\
       0 & 0 & \ddots &\ddots & A_s(k) \\ 
       0 & 0 & 0 & \cdots & 0 \end{pmatrix}, 
\eeq
  where the $A_s(j)$ are matrices of appropriate sizes and for each $j$ there exists at least one $s_j$ such that $A_{s_j} (j) \neq 0$.

   In any case,  $\cD_A$ is circular if and only if  the $A_s$ can be written as a direct sum of 
block superdiagonal
   matrices of the form \eqref{eq:1offdiag}.
   
   \item 
   \label{it:freecirc} 
The free spectrahedron  $\cD_A$ is  free circular
if and only if
 there exist $s,t\in\N$ with $s+t=d$ and a tuple $F$ of $s\times t$ matrices  such that 
$A$ is unitarily equivalent to 
\beq\label{eq:pball}
E= \begin{pmatrix} 0 & F \\ 0 & 0\end{pmatrix}.
\eeq
 \een
\end{theorem}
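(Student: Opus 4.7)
The plan for Part (1) is as follows. For the ``if'' direction, introduce the diagonal unitary $U_t$ that acts as $e^{-i\ell t}I_{d_\ell}$ on the $\ell$-th block of the decomposition in \eqref{eq:1offdiag}. A direct computation gives $U_tA_sU_t^*=e^{it}A_s$, hence $L_A(e^{it}X)=(U_t\otimes I_n)L_A(X)(U_t\otimes I_n)^*$, and $\cD_A$ is circular. For the ``only if'' direction, circularity of $\cD_A$ is equivalent to the equality $\cD_A=\cD_{e^{it}A}$ of free spectrahedra for all $t\in\R$. Invoking uniqueness of the minimal defining pencil up to unitary equivalence (a result expected to be established earlier in the paper), one obtains a unitary $U_t$ with $U_tA_sU_t^*=e^{it}A_s$ for every $s$. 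A standard selection/continuity argument upgrades $t\mapsto U_t$ to a strongly continuous one-parameter unitary group, so Stone's theorem gives $U_t=e^{itH}$ for a self-adjoint $H$. Differentiating at $t=0$ yields $[H,A_s]=A_s$, which in an orthonormal eigenbasis of $H$ forces the nonzero entries of $A_s$ to connect only eigenvalues of $H$ differing by exactly $1$. The no-reducing-subspace hypothesis implies the eigenvalues of $H$ form a single string of $k+1$ consecutive integers; grouping basis vectors by eigenvalue then produces \eqref{eq:1offdiag}. Dropping this irreducibility hypothesis, decompose $A$ into indecomposable orthogonal summands and apply the above to each.

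For the ``if'' direction of Part (2), assume $A$ has the form \eqref{eq:pball}. A Schur complement computation gives
\[
L_A(X)\succeq 0 \iff \Big\|\sum\nolimits_s F_s\otimes X_s\Big\|\leq 1,
\]
which is manifestly invariant under $X\mapsto UX$ because $I\otimes U$ is unitary. Hence $\cD_A$ is free circular.

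The ``only if'' direction is the substance of Part (2). Free circularity with $U=e^{it}I_n$ specializes to ordinary circularity, so Part (1) produces a block superdiagonal representative of $A$ with, say, $k$ superdiagonal blocks, and the task is to force $k=1$. The proposed route uses the Hahn-Banach separation theorem for free circular matrix convex sets promised in the abstract: this theorem should assert that every closed free circular matrix convex set containing the origin in its interior is an intersection of ``matrix pencil balls'' $\{X:\|\sum_s G_s\otimes X_s\|\leq 1\}$, i.e., of free spectrahedra whose coefficient tuples are two-block tuples of the form \eqref{eq:pball}. Applying this to $\cD_A$ realizes $\cD_A$ as a free spectrahedron whose coefficient tuple is a direct sum of two-block tuples, and by uniqueness of the minimal defining pencil, $A$ itself is unitarily equivalent to a single two-block summand, giving \eqref{eq:pball}. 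The main obstacle is this Hahn-Banach step: producing separators that honor the free circular symmetry is the technical heart of the argument. Once it is in place, the deduction that $k=1$ reduces to the minimal-pencil uniqueness already used in Part (1).
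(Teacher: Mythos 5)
Your overall strategy for Part~(1) diverges from the paper's, and the divergence is where the gap lies.  The paper's proof of the ``only if'' direction does \emph{not} attempt to build a one-parameter unitary group.  It invokes the Gleichstellensatz once, fixes a \emph{single} $t$ with $t/\pi$ irrational, diagonalizes the resulting unitary $U_t$, and then runs a purely combinatorial argument (Lemmas~\ref{lem:atmostone}--\ref{lem:irred}) on the block pattern forced by the eigenvalue relation $\bar\lambda_j\lambda_\ell = e^{it}$ or $A_s(j,\ell)=0$.  The irrationality of $t/\pi$ is used to rule out cycles in the resulting ``admissible chain,'' which produces the strictly superdiagonal form directly.

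Your route — upgrading the family $\{U_t\}_t$ to a strongly continuous one-parameter group, applying Stone's theorem, and differentiating to get $[H,A_s]=A_s$ (the sign should in fact be $[H,A_s]=-A_s$, but that is cosmetic) — could in principle be pushed through, but the step you dismiss as ``a standard selection/continuity argument'' is the genuine obstacle and is not standard.  The Gleichstellensatz gives, for each $t$, \emph{some} unitary $U_t$, defined only up to the commutant of $\{A_s,A_s^*\}$.  Even in the irreducible case, where the commutant is $U(1)$, one must (i) choose $U_t$ continuously in $t$ (a section of a principal $U(1)$-bundle over $\R$), and (ii) correct the resulting continuous $U(1)$-valued $2$-cocycle $\phi(t,t')$ with $U_tU_{t'}=\phi(t,t')U_{t+t'}$ in order to get a genuine representation of $\R$.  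Both steps are true but require proof; in the reducible case (which the ``in any case'' clause of the theorem also covers) the commutant is nonabelian and the cocycle argument becomes considerably more delicate.  The paper's single-$t$ argument sidesteps all of this.  As written, your Part~(1) has a real gap at this selection/lifting step.

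For Part~(2) your plan matches the paper's in outline, but two nontrivial steps are being elided.  First, the matricial Hahn--Banach theorem for free circular sets yields a priori an \emph{intersection}, possibly infinite, of matrix pencil balls (equivalently, an operator pencil ball).  To realize $\cD_A$ as a single free spectrahedron with a finite direct sum of two-block coefficient tuples is precisely Theorem~\ref{thm:EWball}, whose proof (via the ``detailed boundary'' and the chain of subspaces $\cP_0\subsetneq\cdots\subsetneq\cP_\rho$) carries real content; you cannot simply pass from ``intersection of pencil balls'' to ``a free spectrahedron.''  Second, once one has $\cD_A=\cD_B$ with $B=\begin{pmatrix}0&G\\0&0\end{pmatrix}$, the Gleichstellensatz only says $A$ is unitarily equivalent to the restriction of $B$ to some reducing subspace $\mathcal E$; one must still argue that $\mathcal E$ splits as $\mathcal G\oplus\mathcal G_*$ compatibly with the two-block decomposition, so that the restricted tuple is again of the form \eqref{eq:pball}.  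This is the content of Corollary~\ref{cor:mainball}, and it requires the normalization $\ker G=\{0\}=\ker G^*$ and the computation with $\sum_j G_j^*G_j$.  ``Uniqueness of the minimal defining pencil'' alone does not give this, because a reducing subspace of a two-block tuple need not a priori inherit the two-block structure.
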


\begin{proof}
Part \ref{it:circ} is proved in  Section \ref{sec:circLMI} 
by a  geometric argument.  
    In strong contrast, the proof of Part \ref{it:freecirc} -- given in  Section \ref{sec:FreeCircSpec}, see Theorem \ref{thm:EWball} and Corollary \ref{cor:mainball} --
depends on
a strengthening (Proposition \ref{prop:sharp})
 of the characterization \cite[Proposition 3.5]{BVM} of free circular matrix convex sets (i.e., a
  version of the Effros-Winkler Theorem \cite{EW} for free circular  matrix convex sets). We give a self-contained proof
  of the latter  in Appendix \ref{sec:FreeCircDomain},
  see Theorem \ref{thm:OperatorBall}.\looseness=-1
\end{proof}

\subsubsection{Rotationally invariant free polynomials}

A free $d \times d$ matrix polynomial
$p$ is \df{invariant under coordinate unitary conjugation} 
if for any $n$,  and
 any  $g$-tuple of unitaries $U=(U_1, \dots, U_g) \in M_n(\C)^g$ 
 there exists a unitary $W$ 
such that for all $X \in M_n(\C)^g$,
\beq
\label{eq:CoordInvarDef}
p(U^*_1 X_1 U_1, \dots , U^*_g X_g U_g)= W^*  p(X) W.
\eeq
Our main theorem on polynomials characterizes monic free matrix polynomials
that are invariant under coordinate unitary conjugation. 

\begin{theorem} 
\label{theorem:invarpolyintro}
If $p$ is a monic free matrix polynomial, 
 then $p$ is invariant under coordinate unitary conjugation if and only if
\[
p(x)\simu 
p_1(x_1) \oplus \cdots \oplus p_g(x_g).
\]
That is, $p$ must be (up to unitary equivalence) a direct sum of univariate matrix polynomials. 
\end{theorem}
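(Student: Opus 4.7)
My plan is to extract constraints from the pure-power multi-degree components of the invariance identity $p(\phi_U X) = W(U)^* p(X) W(U)$, construct an orthogonal decomposition $\C^d = \bigoplus_j V_j$ compatible with the coefficients of $p$, and finally rule out mixed-variable monomials. Write $p(X) = I_d \otimes I_n + \sum_\alpha p_\alpha \otimes X^\alpha$ on $M_n(\C)^g$ and set $P_j^{(\ell)} := p_{x_j^\ell}$. Because $\phi_U(X)_j = U_j^* X_j U_j$ preserves the multi-degree of every monomial, the identity $p(\phi_U X) = W^* p(X) W$ splits multi-degree by multi-degree. For the pure-power multi-degree $\ell e_j$ the unique contributing word is $x_j^\ell$, giving $P_j^{(\ell)} \otimes (U_j^* X^\ell U_j) = W^*(P_j^{(\ell)} \otimes X^\ell) W$; since $(I+tY)^\ell = I + \ell t Y + O(t^2)$ shows $\{X^\ell : X \in M_n\}$ linearly spans $M_n$, this upgrades to
\[
(I_d \otimes U_j^*)(P_j^{(\ell)} \otimes Y)(I_d \otimes U_j) = W^*(P_j^{(\ell)} \otimes Y) W
\]
for every $Y \in M_n$, every $j \in \{1,\dots,g\}$, and every $\ell \geq 1$.

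Fix $m \in \{1,\dots,g\}$ and take $U = (I,\dots,I,U_m,I,\dots,I)$ with $U_m \in U(n)$ non-scalar. The displayed identity then yields two commutation relations on the same unitary $W$: first, $W$ commutes with $P_j^{(\ell)} \otimes Y$ for every $j \neq m$, $\ell \geq 1$, $Y \in M_n$; second, $\tW := W(I_d \otimes U_m^*)$ commutes with $P_m^{(\ell)} \otimes Y$ for every $\ell, Y$. Let $V_j \subset \C^d$ be the closed linear span of $\{P_j^{(\ell)} v, (P_j^{(\ell)})^* v : \ell \geq 1, v \in \C^d\}$, the smallest subspace relative to which every $P_j^{(\ell)}$ has block form $\bigl(\begin{smallmatrix} * & 0 \\ 0 & 0 \end{smallmatrix}\bigr)$ on $V_j \oplus V_j^\perp$. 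The main technical step is a commutant analysis showing that these two relations, taken together for every non-scalar $U_m$, force the subspaces $V_j$ for distinct $j$ to be pairwise orthogonal in $\C^d$. Intuitively, the untwisted commutation pins the $V_j$-block of $W$ (for $j \neq m$) to an operator of the form $w_{V_j} \otimes I_n$, while the twisted commutation forces the $V_m$-block of $W$ to have the form $w_{V_m} \otimes U_m$; since $W$ is a single unitary and $U_m$ is an arbitrary non-scalar, these competing demands are compatible on an overlap $V_j \cap V_m$ only if the overlap is $\{0\}$.

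Once pairwise orthogonality gives $\C^d = \bigoplus_j V_j \oplus V_0$, every $P_j^{(\ell)}$ is supported in its $V_j$-diagonal block. It remains to show $p_\alpha = 0$ for every word $\alpha$ that is not a pure power. For a multi-degree $\vec\mu$ involving two or more distinct variables, the sum $\sum_{\text{mdeg}(\alpha)=\vec\mu} p_\alpha \otimes \phi_U(X)^\alpha$ produces cross factors $U_j U_k^*$ between consecutive letters of distinct type; when $U$'s are placed in a single slot $m$ with $U_m$ non-scalar, these factors cannot be absorbed by a single conjugation $W^*(\cdot) W$ compatible with the orthogonal block structure of the pure-power coefficients, unless the corresponding $p_\alpha$ annihilate $V_j \oplus V_k$. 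An induction on $|\alpha|$ combined with the commutant relations derived from cross-multi-degree identities shows every mixed $p_\alpha$ vanishes. Absorbing the trivial summand $V_0$ into any $V_j$ gives $p(x) \simu \bigoplus_j p_j(x_j)$ with $p_j(x_j) = I_{V_j} + \sum_\ell (P_j^{(\ell)}|_{V_j})\, x_j^\ell$ monic univariate, as claimed.

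The main obstacle is the commutant step in the second paragraph. Because the linear spans $V_j$ do not in general contain $I_d$, the commutant of $\{P_j^{(\ell)} \otimes M_n\}$ in $M_d \otimes M_n$ is not simply $V_j^c \otimes I_n$ but acquires additional freedom on $V_j^\perp \otimes \C^n$; one must do a careful block-by-block case analysis, or replace the commutant computation by a representation-theoretic reformulation in which $U \mapsto W(U)$ is read as a projective polynomial representation of $U(n)^g$ of degree bounded by $\deg p$. In the latter framework, the same-$U$ specialization $U_j \equiv V$ forces $W(V,\dots,V) \sim I_d \otimes V$ (after reducing to the case where $\{p(X)\}$ generates $M_{dn}$), and the bounded polynomial degree then pins $W$ down to a direct sum of single-slot standard representations $(I_{V_j} \otimes U_j)$, from which the claimed block structure of $p$ is immediate.
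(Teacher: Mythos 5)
Your proposal takes a genuinely different route from the paper's. The paper kills cross terms by a substitution argument: it evaluates at block tuples $\tilde x_j=0\oplus\cdots\oplus x_j\oplus\cdots\oplus 0$ and permutation conjugates, observes that $p(\tilde x)$ has no cross terms, then passes through generic matrices to conclude $p$ itself has none. It then uses invariance of $p^2$, $p^*p$, $pp^*$ to obtain $p_i(x_i)p_j(x_j)=p_i^*p_j=p_ip_j^*=0$ for $i\ne j$, and finally invokes a $C^*$-algebra decomposition (Lemma \ref{lemma:polydirectsumnonsym}) to produce the block form. Your approach instead tries to read off constraints directly from the multi-degree components of $p(\phi_U X)=W^*p(X)W$, build the decomposition $\C^d=\bigoplus V_j$ from a commutant analysis, and then kill mixed terms by a separate induction.

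That said, the proposal has gaps that are not just "left as a careful case analysis" but go to the heart of the argument. First, and most seriously, you define $V_j$ using only the pure-power coefficients $P_j^{(\ell)}=p_{x_j^\ell}$ and arrive at the claimed form $p_j(x_j)=I_{V_j}+\sum_\ell (P_j^{(\ell)}|_{V_j})\,x_j^\ell$. This is not the conclusion of the theorem: each $p_j$ is allowed to be an arbitrary free polynomial in both $x_j$ and $x_j^*$, including words like $x_j^*x_j$ or $x_jx_j^*x_j$. Your pure-power extraction sees only the multi-degrees $\ell e_j$ in a single unstarred letter, and in particular it gives no constraints on the coefficients $p_w$ for starred univariate words $w$; the decomposition built from $\{P_j^{(\ell)}\}$ alone therefore need not reduce the whole set of univariate coefficients, and your final formula is simply wrong as written. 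Second, you explicitly acknowledge that the commutant step (the pairwise orthogonality of the $V_j$) is an obstacle because the commutant of $\{P_j^{(\ell)}\otimes M_n\}$ in $M_{dn}$ "acquires additional freedom on $V_j^\perp\otimes\C^n$," and the alternative "projective polynomial representation of $U(n)^g$" is a one-sentence sketch with its own missing ingredient (an unjustified reduction to the case where $\{p(X)\}$ generates $M_{dn}$). Third, the final paragraph asserting that the cross factors $U_jU_k^*$ "cannot be absorbed by a single conjugation $W^*(\cdot)W$" unless mixed $p_\alpha$ annihilate $V_j\oplus V_k$, and that an induction on $|\alpha|$ finishes, is also left entirely unargued -- note that a mixed multi-degree already involves several words (e.g.\ $x_jx_k$ and $x_kx_j$ share multi-degree), so the displayed identity couples their coefficients in a way that needs real work to untangle. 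The paper's route -- killing cross terms first via the generic-matrix substitution, then algebraic relations from $p^2,p^*p,pp^*$, then a $C^*$-algebra decomposition -- avoids all of these issues and is the one you should study; as it stands, your outline would need both a corrected definition of the $V_j$ (to account for all univariate words, including starred ones) and a completed commutant/induction argument before it could be called a proof.
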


\begin{proof}
 The proof appears in Section  \ref{sec:UnitaryConjPoly}.
\end{proof}

\subsection{Readers Guide}
\label{sec:guide}
 In Section \ref{sec:circLMI} we prove Theorem \ref{theorem:circularmain}~\ref{it:circ} -- the  classification of all circular free spectrahedra; i.e., free spectrahedra that are closed under rotations by $e^{i t}.$   
In Section \ref{sec:FreeCircSpec}  we characterize
free circular spectrahedra 
thus finishing the proof of
 Theorem \ref{theorem:circularmain}.
    Finally, in Section \ref{sec:UnitaryConjPoly} we turn our attention to free matrix polynomials and
 prove Theorem \ref{theorem:invarpolyintro}.
Appendix \ref{sec:FreeCircDomain}  contains a self-contained
proof of the Ball-Marx-Vinnikov  Theorem \cite{BVM} classifying
 free circular
 matrix convex  sets. We prove a sharpened
 version by establishing an effective 
Hahn-Banach 
  separation result for free circular domains, see Proposition \ref{prop:2sharp}.

\section{Circular Free Spectrahedra}\label{sec:circLMI}
 A subset $\cD \subset  M(\C)^g$ is \df{circular} if $Z \in \cD$ implies $e^{i t} Z \in \cD$ for all $t\in\R$.
 In this section the first part of Theorem \ref{theorem:circularmain} characterizing circular free spectrahedra is established. The main idea of the proof is as follows. 
 Assuming $\cD_A$ is a circular free spectrahedron, 
 for  each $t\in\R$, the pencil $L_{e^{it}A}$ determines the same free spectrahedron
  as  $L_A$, namely $\cD_{A}=\cD_{{e^{it}A}}$.  
We are thus in a position to apply the Gleichstellensatz
(see e.g.~\cite[Theorem 1.2]{HKM13} and \cite[Theorem 1.2]{Zal})
characterizing when two free spectrahedra are the same.

\begin{remark}\rm
 \label{rem:redundant}
   It turns out if, for a $t$ such that $\frac{t}{\pi}$ is irrational,  $e^{it} A$ is  unitarily equivalent to $A$, then $\cD_A$ is circular. This fact is a corollary of the proof of Theorem \ref{theorem:circularmain}~\ref{it:circ} given below. For a direct proof, observe, if $e^{it} A = U^* A U$, then $e^{int} A = U^{*n} A U^n$ and thus, for a dense set of $t \in \mathbb R$, the tuple $e^{it} A$ is unitarily equivalent to $A$. A routine limiting argument completes the proof. 
\end{remark}

\subsection{Set up for the Proof of Theorem \ref{theorem:circularmain}~\ref{it:circ}}

Suppose $A$ satisfies the hypotheses of the theorem, except for possibly the irreducibility condition. Here, a $g$-tuple $A \in M_d(\C)^g$ is said to be \df{irreducible} if the $A_s$ have no common reducing subspace, i.e, if there is no proper subspace $M \subseteq M_d(\C)$ such that $A_s M \subseteq M$ and $A_s^* M\subseteq M$  for each $1\le s\le g$. 

We first present  the linear Gleichstellensatz 
adapted to our set up  of  free (non-symmetric) variables.

\begin{prop}\label{prop:Gsatz}
If $B\in M_e(\C)^g$ satisfies $\cD_A=\cD_B$, where
 $A\in M_d(\C)^g$ is minimal defining for $\cD_A$,
then
$B$ is unitarily equivalent to $A\oplus J$ for some $g$-tuple $J$. 
\end{prop}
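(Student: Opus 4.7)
The plan is to reduce to the symmetric linear Gleichstellensatz cited in the paper (\cite[Theorem 1.2]{HKM13}, \cite[Theorem 1.2]{Zal}) by Hermitianizing the complex variables. Writing $x_j = u_j + i v_j$ with $u_j, v_j$ self-adjoint, one has
\[
L_A(x) \;=\; I_d - \sum_{j=1}^g (A_j + A_j^*)\, u_j - \sum_{j=1}^g i(A_j - A_j^*)\, v_j,
\]
so $L_A$ coincides with a symmetric monic pencil $\tL_A(u,v)$ in $2g$ self-adjoint variables having coefficients $S_j := A_j + A_j^*$ and $T_j := i(A_j - A_j^*)$. Since every $X_j \in M_n(\C)$ decomposes uniquely as $X_j = U_j + iV_j$ with $U_j, V_j$ self-adjoint, evaluating $L_A(X)$ is the same as evaluating $\tL_A(U, V)$; in particular the equality $\cD_A = \cD_B$ is equivalent to equality of the symmetric free spectrahedra associated to the symmetrized tuples $\tilde A := (S_1^A, T_1^A, \dots, S_g^A, T_g^A)$ and $\tilde B$ defined analogously.

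Next I would transfer the minimality hypothesis. An orthogonal direct-sum decomposition of the symmetrized coefficient tuple $(S_j,T_j)_{j=1}^g$ is the same data as a common reducing subspace for $\{A_1,\dots,A_g\}$: indeed each $S_j, T_j$ is a $\C$-linear combination of $A_j, A_j^*$, and conversely $A_j = \tfrac{1}{2}(S_j - iT_j)$, so a subspace is invariant under all $S_j, T_j$ (automatically reducing them, as those are self-adjoint) if and only if it is reducing for all $A_j$. It follows that $L_A$ is a minimal defining pencil for $\cD_A$ in the non-symmetric sense if and only if $\tL_A$ is minimal defining for $\cD_A$ viewed as a symmetric free spectrahedron.

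With these correspondences established, the symmetric Gleichstellensatz applies to the pair $(\tL_A, \tL_B)$ and produces a unitary $U$ with
\[
U^* S_j^B U = S_j^A \oplus S_j', \qquad U^* T_j^B U = T_j^A \oplus T_j' \qquad (1 \le j \le g)
\]
for some self-adjoint $S_j', T_j'$. Recombining via $B_j = \tfrac{1}{2}(S_j^B - i T_j^B)$ yields $U^* B_j U = A_j \oplus J_j$ with $J_j := \tfrac{1}{2}(S_j' - iT_j')$, which is the desired unitary equivalence $B \simu A \oplus J$. The only delicate point is the minimality transfer, and it reduces to the elementary correspondence between $\C$-reducing subspaces of $\{A_j\}$ and orthogonal block decompositions of the symmetrized tuple; everything else is bookkeeping for the real-linear change of coordinates $A_j \leftrightarrow (S_j, T_j)$.
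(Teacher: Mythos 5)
Your proof is correct and follows essentially the same route as the paper: Hermitianize the pencil into self-adjoint coefficients in $2g$ self-adjoint variables, transfer both the domain equality and the minimality hypothesis (via the correspondence between reducing subspaces for $\{A_j\}$ and for the symmetrized tuple), then apply the symmetric Gleichstellensatz and recombine. The only detail the paper adds is the remark that the cited Gleichstellensatz, stated over $\R$ for symmetric matrices, holds by the same proofs over $\C$ for self-adjoint tuples.
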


\def\cDD{\mathscr D}
\begin{proof}
The statement holds when working over the field of real numbers and evaluating at tuples of symmetric matrices
by \cite[Theorem 1.2]{HKM13} and \cite[Theorem 1.2]{Zal}.
It is easy to see that the same proofs work over the field
of complex numbers and evaluating at tuples of self-adjoint matrices. We now reduce the proposition to this case.

To each monic pencil $L_A(x)$ in free variables $x,x^*$ we can
associate a monic pencil $\cL_{(\pre A,\pim A)}(y,z)$ with
self-adjoint coefficients in self-adjoint variables 
$y,z$ as follows. Let $\pre A_j=\frac12(A_j+A_j^*)$ 
and $\pim A_j=\frac1{2i}(A_j-A_j^*)$ for $j=1,\ldots,g$.
Then
\[
\cL_{(\pre A,\pim A)}(y,z)= I - \sum_{j=1}^g \pre A_j y_j -
\sum_{j=1}^g \pim A_j z_j.
\]
Each $X\in\cD_A$ yields a point $\frac12 \big(X+X^*,i(X-X^*)\big)$
in the free spectrahedron  (in self-adjoint variables)
$\cDD_{(\pre A,\pim A)}$. Conversely, given
$(Y,Z)\in\cDD_{(\pre A,\pim A)}$ we have
$Y-iZ\in\cD_A$. Hence $\cD_A=\cD_B$  implies that
$\cDD_{(\pre A,\pim A)}=\cDD_{(\pre B,\pim B)}$.

We claim that $\cL_{(\pre A,\pim A)}(y,z)$
is a minimal defining pencil for
$\cDD_{(\pre A,\pim A)}$. Indeed, as otherwise 
by the Gleichstellensatz (\cite[Theorem 1.2]{HKM13} or \cite[Theorem 1.2]{Zal}), there will be 
a
reducing subspace for $(\pre A,\pim A)$ and a
compression 
$\cL_{(\pre {\tilde A},\pim {\tilde A})}(y,z)$
of $\cL_{(\pre A,\pim A)}(y,z)$
to this subspace 
with
$\cDD_{(\pre A,\pim A)}=\cDD_{(\pre {\tilde A},\pim {\tilde A})}$. But this in turn will 
 yield a subpencil 
$L_{\tilde A}$ of $A$ with the same free spectrahedron
as $A$, contradicting the minimality of $L_A$.

Hence, again by the Gleichstellensatz, $(\pre A,\pim A)$
is (unitarily equivalent to) a subpencil of
$(\pre B,\pim B)$. But then $A$ is a subpencil of B, as desired.
\end{proof}

 Since, for each $t$, $A$ and $e^{it}A$ are minimal defining tuples for the free spectrahedron $\cD_A=\cD_{e^{it}A}$, by Proposition \ref{prop:Gsatz}, for each $t\in\R$ there is a unitary $U=U_t \in M_d(\C)$ such that,
 for each $s= 1, \dots, g,$
  \beq
  \label{eq:UAUe}
  U_t^*A_s U_t =e^{it}A_s.
  \eeq
 For a fixed $s$, equation \eqref{eq:UAUe} holds for each real $t$ so the spectrum of $A_s$ is a circular set for each $s$. Since each $A_s$
  is finite dimensional, the spectrum of each $A_s$ is $\{0\}$ 
  and each $A_s$ is nilpotent. 

 Fix a number $t$ relatively irrational with respect to $\pi$.
 For notational ease, abbreviate $U=U_t$ (for this $t$). Being unitary,
 the matrix $U$ can be (block) diagonalized as
\[
U=W^*DW
\]
where $D \in M_d(\C)$ is diagonal and $W \in M_d(\C)$ is unitary. Equation \eqref{eq:UAUe} shows
\[
D^*WA_sW^*D=e^{it}WA_sW^*
\]
Clearly, $L_{WAW^*}$ and $L_{A}$ define the same free spectrahedron. Thus, without loss of generality,  $U$ may be taken to have the form
\beq
\label{eq:ublock}
U = (\lambda_1 I_{m_1}\oplus \lambda_2 I_{m_2} \oplus \cdots \oplus
\lambda_{k+1} I_{m_{k+1}}),
\eeq
 where the $\lambda_j$ are distinct unimodular 
  numbers. Let $\mathcal{S}_j$ denote the corresponding eigenspace of $U$
   and let the $I_{m_j}$ be identity matrices on these spaces.

 Since $\C^d=\cS_1 \oplus \dots \oplus \cS_{k+1}$ we can use this orthogonal sum to give a block decomposition 
 \beq
\label{eq:Asblock}
A_s=( A_s(j,\ell) )_{j,\ell} )
\eeq
 subordinate to the $\cS_i$. 
 Note that 
 \beq
  \overline{\la_j} \la_\ell A_s(j,\ell)  = e^{i t} A_s(j,\ell) \qquad
 \eeq
so it follows that
 \beq
 \label{eq:dicot}
  \la_\ell  = e^{i t}   \la_j \qquad  \text{or}  
  \qquad A_s(j,\ell)=0 \ \ \ \text{for all } s.
 \eeq
 Equation \eqref{eq:dicot} implies  $A_s(j,j)=0$ for each $s$ and $j$.

\begin{lemma}
\label{lem:atmostone}
Let $U \in M_d(\C)$ be a unitary with the form of equation \eqref{eq:ublock} and let $A=(A_1, \dots, A_g) \in M_d(\C)^g$ 
be a $g$-tuple of matrices with block decomposition $A_s=(A_s (j,\ell)_{j,\ell})$ as described in equation \eqref{eq:Asblock}.
Assume there is a $t \in \R$ relatively irrational with respect to $\pi$ such that
$e^{it}A_s=U^*A_s U$
for all s.

 Given $1\le j,\hat{j}, \ell,\hat{\ell}\le k+1$, if $A_s (j,\ell) \neq 0$
 and if $A_{\hat{s}} (j, \hat{\ell}) \neq 0$, then, by equation \eqref{eq:dicot}, $\ell= \hat{\ell}$.
Likewise, if $A_s (j, \ell) \neq 0$ and if $A_{\hat{s}} (\hat{j}, \ell) \neq 0$, then $j=\hat{j}$. 
Moreover, if $(j, \ell)$ is a nonzero location, then, for $\hat{j} \neq j$ and $\hat{\ell} \neq \ell$
and all $s$, the matrices $A_s (\hat{j}, \ell)$ and $A_s (j, \hat{\ell})$ are both zero.
\end{lemma}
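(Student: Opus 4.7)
The plan is to derive all three assertions directly from the dichotomy in equation \eqref{eq:dicot}, which has already been established in the setup preceding the lemma. Recall that the dichotomy says: for every choice of indices $j,\ell$ and every $s$, either $\overline{\lambda_j}\lambda_\ell = e^{it}$ or $A_s(j,\ell)=0$. The crucial extra piece of information we will exploit is that the $\lambda_j$ are pairwise distinct, so for each fixed $j$ there is \emph{at most one} index $\ell$ with $\lambda_\ell = e^{it}\lambda_j$ (and symmetrically with the roles of $j$ and $\ell$ swapped).

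For the first assertion, I would argue as follows. Suppose $A_s(j,\ell)\neq 0$ and $A_{\hat s}(j,\hat\ell)\neq 0$. Applying \eqref{eq:dicot} to each of these nonzero blocks gives $\lambda_\ell = e^{it}\lambda_j$ and $\lambda_{\hat\ell} = e^{it}\lambda_j$. Hence $\lambda_\ell = \lambda_{\hat\ell}$, and distinctness forces $\ell = \hat\ell$. The second assertion is the mirror image: from $A_s(j,\ell)\neq 0$ and $A_{\hat s}(\hat j,\ell)\neq 0$ one reads off $\lambda_j = e^{-it}\lambda_\ell = \lambda_{\hat j}$, so $j=\hat j$.

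The third assertion is a contrapositive packaging of the first two. Assume $(j,\ell)$ is a nonzero location, meaning $A_{s_0}(j,\ell)\neq 0$ for some $s_0$. If some $A_s(j,\hat\ell)$ were nonzero with $\hat\ell\neq \ell$, the first assertion applied to $s_0$ and $s$ would yield $\hat\ell = \ell$, a contradiction; so $A_s(j,\hat\ell)=0$ for all $s$ whenever $\hat\ell\neq\ell$. Symmetrically, the second assertion gives $A_s(\hat j,\ell)=0$ for all $s$ whenever $\hat j\neq j$.

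There is really no serious obstacle here; the content of the lemma is a bookkeeping consequence of \eqref{eq:dicot} plus distinctness of the $\lambda_j$. The only point that requires care is making sure that the hypotheses in the different clauses use possibly different tuple indices (the $s$ and $\hat s$, which may differ) — but since the dichotomy \eqref{eq:dicot} holds for every $s$ individually, this causes no trouble. In the write-up I would simply state the three implications in order and invoke \eqref{eq:dicot} together with the distinctness of the $\lambda_j$ each time.
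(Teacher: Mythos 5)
Your argument is correct and follows exactly the same route as the paper: apply the dichotomy \eqref{eq:dicot} to each pair of nonzero blocks, conclude equality of the corresponding $\lambda$'s, and invoke their distinctness to equate indices; the third clause is then the contrapositive of the first two, just as the paper leaves implicit. No gaps.
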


\begin{proof}
Fix $1 \leq j \leq k+1$ and note from equation \eqref{eq:dicot} that if 
$A_s(j, \ell)$ and $A_{\hat{s}} (j, \hat{\ell})$ are both not zero, then
$\lambda_\ell=e^{it} \lambda_j$ and $\lambda_{\hat{\ell}}=e^{it} \lambda_j$. 
In particular,  $\lambda_\ell= \lambda_{\hat{\ell}}$. Since the $\lambda_k$ are distinct
it follows that $\ell=\hat{\ell}$. 
Similarly if $A_s (j, \ell)$ and $A_{\hat{s}} (\hat{j}, \ell)$ are both not zero, then equation $\eqref{eq:dicot}$
shows $\lambda_j= \lambda_{\hat{j}}$, hence $j=\hat{j}$.
\end{proof}

 Given a family of matrices $A=\{A_s\}_{s=1}^g$ with the block decomposition $A_s=(A_s(j,\ell))_{j,\ell})$,
a  sequence of pairs from the set $\{1,\dots, k+1\}$ 
 of the form
 \beq
 \label{eq:chain}
 \cC =  \{(j_0,j_1),(j_1,j_2),(j_2,j_3),\dots, (j_m,j_{m+1}) \}
 \eeq
 such that for each $1\le r\le m$ there is an $s$ such that  $A_s(j_r,j_{r+1})\not =0$ is an \df{admissible chain}.
 Call $j_0$ the left end of $\cC$ and denote by $\cS_\cC$ the subspace
 \beq
 \label{def:cScC}
 \cS_\cC= \cS_{j_0} \oplus \cS_{j_1} \oplus \cS_{j_2} \oplus \cdots \oplus  \cS_{j_{m+1}}.
 \eeq
The family $A$ has a \df{block zero column} if there is an $\ell$ such that 
$A_s(j,\ell)=0$ for all $s,j$.

\begin{lemma}
\label{lem:zerocolumn}
Assume the setup and hypotheses of Lemma \ref{lem:atmostone} with chain structure as described in equation \eqref{eq:chain}.
\begin{enumerate}[label={\rm(\arabic*)}]
\item \label{it:distinct} If $\cC$ is a chain as in \eqref{eq:chain}, then the $j_k$ are distinct.
\item \label{it:blockzero} The family $A$ has a block zero column. 
\end{enumerate}
\end{lemma}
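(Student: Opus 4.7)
The plan is to exploit the relation \eqref{eq:dicot} one link at a time along a chain, and then encode the global block-nonzero pattern as a directed graph on $\{1,\dots,k+1\}$ whose combinatorics is forced by Lemma~\ref{lem:atmostone}.

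\medskip\noindent\textbf{Part \ref{it:distinct}.} For any admissible chain $\cC=\{(j_0,j_1),\dots,(j_m,j_{m+1})\}$, each link $(j_r,j_{r+1})$ is an index at which some $A_s(j_r,j_{r+1})\neq 0$, so \eqref{eq:dicot} forces
\[
\lambda_{j_{r+1}}=e^{it}\lambda_{j_r}
\qquad\text{for } 0\le r\le m.
\]
Iterating gives $\lambda_{j_r}=e^{irt}\lambda_{j_0}$ for every $r$. Since $t/\pi$ is irrational, the numbers $e^{irt}$ are pairwise distinct as $r$ ranges over $\{0,1,\dots,m+1\}$, hence the $\lambda_{j_r}$ are pairwise distinct, and since the $\lambda_i$ are distinct to begin with, so are the indices $j_r$.

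\medskip\noindent\textbf{Part \ref{it:blockzero}.} I would introduce the directed graph $G$ with vertex set $\{1,\dots,k+1\}$ and an edge $j\to\ell$ whenever there exists some $s$ with $A_s(j,\ell)\neq 0$. Lemma~\ref{lem:atmostone} says: if $j\to\ell$ is an edge, then for every $\hat\ell\neq\ell$ no $A_s(j,\hat\ell)$ is nonzero, and for every $\hat j\neq j$ no $A_s(\hat j,\ell)$ is nonzero. In graph language, every vertex of $G$ has out-degree at most $1$ and in-degree at most $1$. Consequently each weakly connected component of $G$ is either a simple directed path or a simple directed cycle.

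A directed cycle $j_0\to j_1\to\cdots\to j_m\to j_0$ would, however, yield an admissible chain $\{(j_0,j_1),\dots,(j_{m-1},j_m),(j_m,j_0)\}$ whose indices repeat, contradicting Part~\ref{it:distinct}. Hence $G$ contains no cycle, so every component is a simple directed path. Such a path has a unique source, i.e.\ a vertex with no incoming edge. Any source vertex $\ell$ satisfies $A_s(j,\ell)=0$ for every $s$ and every $j$; that is exactly a block zero column.

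\medskip\noindent The step I expect to require the most care is just translating ``$e^{it}A=U^*AU$'' into the right in/out-degree conditions; once that is set up, the cycle-exclusion argument via Part~\ref{it:distinct} is essentially immediate, and the existence of a source in an acyclic graph with bounded in-degree is elementary.
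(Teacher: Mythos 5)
Your argument is correct and follows the same route as the paper: Part \ref{it:distinct} is the same computation $\lambda_{j_r}=e^{irt}\lambda_{j_0}$ together with irrationality of $t/\pi$ (the paper phrases it as a contradiction from a closed chain, you phrase it directly), and Part \ref{it:blockzero} is the same cycle-exclusion via Lemma~\ref{lem:atmostone} and Part~\ref{it:distinct}, which you package as a digraph on $\{1,\dots,k+1\}$ with in- and out-degree at most one. The graph language is a clean way to surface the pigeonhole step the paper states more tersely (``each column and row has exactly one nonzero entry''), but the underlying mechanism is identical.
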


\begin{proof}
 Suppose $\cC$ is a chain as in \eqref{eq:chain}, but the $j_k$ are not distinct. Since $A_s(j,j)=0$ for all $j$ and $s$, in this case we may assume that $m\ge 1$ and $j_{m+1}=j_0$ and $j_k\ne j_\ell$ for $0\le k,\ell\le m$. By reindexing if needed, we may assume that
\[
 \cC =\{(m,1),(1,2),(2,3), \dots, (m-1,m)\}
\]
 is an admissible chain. Summarizing, for each  $1 \leq j <m$ there exists
an $s_j$ such that $A_{s_j} (j,j+1) \neq 0$ and there exists an $s_m$ such that
  $A_{s_m}(m,1)\ne 0$.   
 Equation \eqref{eq:dicot} implies
  $\lambda_j=\lambda_1 e^{(j-1)it}$ for each $1 \leq j \leq p.$ Thus $\lambda_p$
  must be both $\lambda_1 e^{-it}$ and $\lambda_1 e^{(p-1)it}$. Hence
 $pt$ is a multiple of $2\pi$ contradicting the choice of  $t$ as relatively irrational
with respect to $\pi$ and the proof of item \ref{it:distinct} is complete.

Turning to item \ref{it:blockzero} and  arguing by contradiction, suppose for each $\ell$ there exists
  a $j_\ell$ and an $s_\ell$ so that $A_{s_\ell} (j_\ell, \ell)\ne 0$.
  In this case,  since, by Lemma \ref{lem:atmostone}, each column and row has exactly one nonzero entry and since all diagonal
 entries of $A_s$ are zero, there is an $m$ and distinct indices $j_0,j_1,\dots,j_{m}$ such that
\[
 \cC =\{(j_0,j_{m}), (j_{m},j_{m-1}), \dots, (j_2,j_1), (j_1,j_0)\}
\]
is an admissible chain. An application of item \ref{it:distinct} concludes the proof.
\end{proof}

The following lemma completes the set up for the proof of Theorem \ref{theorem:circularmain}~\ref{it:circ}.

\begin{lemma}
\label{lem:irred}
Assume the set up and hypotheses of Lemma \ref{lem:zerocolumn} and assume $\cC$ is a maximal 
chain whose left end $j_0$ is a block zero column of the $A_s$. Then the following hold
\mbox{}\par
\begin{enumerate}[label={\rm(\arabic*)}]
\item
\label{it:invarsubspace}
 $\cS_\cC$ (defined in equation \eqref{def:cScC}) is a common reducing subspace for each $A_s$.
\item 
\label{it:reducingdecomp}
The restriction of each  $A_s$ to $\cS_{\cC} $ has the form of equation \eqref{eq:1offdiag} with respect to the orthogonal decomposition of $\cS$ as described by equation \eqref{def:cScC}.
\item
\label{it:irred}
 If $A$ is an irreducible family and  $A_s(\ell,\hat{j})=0$ for all $1\le \ell \le k+1$ and $1\le s\le g,$ then $\hat{j}=j_0$. In particular, the $A_s$ have exactly one block zero column. By reindexing if needed, $\{1,\dots,k+1\}$ is an admissible chain and $A_s(\ell,1)=0$ for all $s,j$. 
\end{enumerate}
\end{lemma}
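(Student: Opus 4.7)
The plan is to prove items \ref{it:invarsubspace}, \ref{it:reducingdecomp}, \ref{it:irred} in order, leveraging the structural lemmas already in hand. By Lemma \ref{lem:atmostone}, each block row $j$ and each block column $\ell$ of $A$ supports \emph{at most one} nonzero entry, and the supporting $(j,\ell)$-pattern is independent of $s$. By Lemma \ref{lem:zerocolumn}\ref{it:distinct}, the indices along the maximal chain $\cC$ are pairwise distinct.

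For item \ref{it:invarsubspace}, I would check $A_s \cS_{j_r}$ and $A_s^* \cS_{j_r}$ for each $0 \le r \le m+1$. Since $j_0$ is a block zero column, $A_s \cS_{j_0} = 0 \subset \cS_\cC$. For $1 \le r \le m+1$, the chain supplies $A_s(j_{r-1},j_r) \ne 0$ for some $s$; by Lemma \ref{lem:atmostone} this forces the unique nonzero block of column $j_r$ to sit in row $j_{r-1}$, hence $A_s \cS_{j_r} \subset \cS_{j_{r-1}} \subset \cS_\cC$. Dually, for $0 \le r \le m$ the chain supplies $A_s(j_r,j_{r+1}) \ne 0$, so the unique nonzero block of row $j_r$ lies in column $j_{r+1}$ and $A_s^* \cS_{j_r} \subset \cS_{j_{r+1}} \subset \cS_\cC$. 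The only case left is $A_s^* \cS_{j_{m+1}}$; I claim row $j_{m+1}$ is zero. Were $A_s(j_{m+1},\ell) \ne 0$, the extended sequence obtained by appending $(j_{m+1},\ell)$ to $\cC$ would be an admissible chain: $\ell \neq j_0$ because column $j_0$ is zero, and $\ell \neq j_r$ for $1 \le r \le m+1$ because column $j_r$ already has its unique nonzero block in row $j_{r-1}\neq j_{m+1}$ (using distinctness from Lemma \ref{lem:zerocolumn}\ref{it:distinct}). This new chain still starts at the block zero column $j_0$, contradicting maximality of $\cC$.

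For item \ref{it:reducingdecomp}, reorder the summands of $\cS_\cC$ as $\cS_{j_0}, \cS_{j_1}, \dots, \cS_{j_{m+1}}$ and relabel them $1,\dots,m+2$. The only nonzero blocks of $A_s|_{\cS_\cC}$ sit at positions $(j_r,j_{r+1})$, which in the new indexing become $(r{+}1,r{+}2)$ -- the superdiagonal of \eqref{eq:1offdiag}. Admissibility of $\cC$ then gives for each $r$ an $s_r$ with $A_{s_r}(j_r,j_{r+1}) \ne 0$, matching the non-vanishing requirement in \eqref{eq:1offdiag}.

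For item \ref{it:irred}, irreducibility together with $\cS_{j_0} \subset \cS_\cC$ being nonzero forces $\cS_\cC = \C^d$ (since by \ref{it:invarsubspace} it is a common reducing subspace), so $\{j_0,\dots,j_{m+1}\} = \{1,\dots,k+1\}$. Every $j_r$ with $r \ge 1$ receives the incoming block $A_s(j_{r-1},j_r) \ne 0$, so its column is nonzero; therefore $j_0$ is the unique block zero column, proving that $\hat j = j_0$ whenever column $\hat j$ vanishes. Reindexing $j_r \mapsto r+1$ then makes $\{1,\dots,k+1\}$ the admissible chain $(1,2),(2,3),\dots,(k,k+1)$ with the sole block zero column at $1$. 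The main obstacle is the maximality argument in item \ref{it:invarsubspace} that row $j_{m+1}$ vanishes; the rest is bookkeeping on the sparse superdiagonal pattern forced by Lemma \ref{lem:atmostone}.
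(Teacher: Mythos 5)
Your proof is correct and takes essentially the same approach as the paper's: Lemma \ref{lem:atmostone} pins each row and column touched by the chain to a single nonzero block, and maximality of $\cC$ (together with the distinctness from Lemma \ref{lem:zerocolumn}) forces row $j_{m+1}$ to vanish. The only cosmetic difference is that you establish the reducing property via $A_s^*$-invariance of $\cS_\cC$ rather than $A_s$-invariance of $\cS_\cC^\perp$, which is equivalent.
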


\begin{proof}
Use the notations of equations \eqref{eq:chain} and \eqref{def:cScC}.  In particular, $A_s(j,k)$ maps $\cS_{k}$ into $\cS_{j}$.\looseness=-1

By the definition of chain, for each $1\le \ell \le m$, there is an $s_\ell$ such that $A_{s_\ell}(j_\ell, j_{\ell+1})\ne 0$. From Lemma \ref{lem:atmostone}, for each $1\le \ell \le m$, each  $j\ne j_{\ell+1}$ and each $1\le s\le g$ the matrix $A_s(j_\ell,j)=0$. Hence, $A_s  \cS_{j_{\ell+1}} \subset  \cS_{j_\ell}.$ On the other hand, $A_s(j,j_0)\cS_{j}=0$  by the choice of $j_0$.   It follows that $A_s \cS_{\cC} \subset \cS_{\cC}.$   Thus $\cS_{\cC}$ is a common {\it invariant} subspace for the $A_s$.    On the other hand, since, for each $0\le \ell \le m$ the location $(j_\ell,j_{\ell+1})$  is a nonzero location, Lemma \ref{lem:atmostone} shows $A_s(j_\ell,q)=0$ for all $q\notin \{j_0,\dots, j_{m+1} \}$ and all $1\le s\le g$. Finally, the existence of  a $q\notin \{j_0,\dots, j_{m+1} \}$ and an $s$ such that $A_s(j_{m+1},q)\ne 0$ contradicts the maximality of the chain $\cC$. Hence $A_s(j_{ml+1},q)=0$ for all such $q$ and all $s$.  It follows that $\cS^\perp$ is also a common invariant subspace for the $A_s$. Hence $\cS$ is reducing. 

Items (2) and (3) follow immediately from item (1) and the definition of irreducible.
\end{proof}

\subsection{Proof of Theorem \ref{theorem:circularmain}~\ref{it:circ}}

The initial set up of the proof shows that, up to unitary equivalence, the $A_s$
 are nilpotent matrices and that there exists a $t$ relatively irrational with respect
to $\pi$ and a unitary $U$ with the form of equation \eqref{eq:ublock} such that
\[
U^*A_s U=e^{it} A_s \quad\text{for all $s$.}
\]

Relative to the block decomposition for $U$, write $A_s=(A_s (j,\ell)_{j, \ell})$ 
(as described in equation \eqref{eq:Asblock}). Applying Lemma \ref{lem:atmostone} shows that if $A_s (j,\ell) \neq 0$
 and $A_{\hat{s}} (j, \hat{\ell}) \neq 0,$ then $\ell= \hat{\ell}$ and if $A_s (j, \ell) \neq 0$ and $A_{\hat{s}} (\hat{j}, \ell) \neq 0,$ then $j=\hat{j}$.

Applying Lemma \ref{lem:zerocolumn} shows that there is some $j_0$ such that $A_s (\ell, j_0)=0$ for all $s$ and $\ell$. 
It follows that the $A_s$ have a maximal admissible chain $\cC$ of the form 
\[
\cC =  \{(j_0,j_1),(j_1,j_2),(j_2,j_3),\dots, (j_m,j_{m+1}) \}
\]
whose left end $j_0$ is a block zero column of the $A_s$.

Applying Lemma \ref{lem:irred}~\ref{it:invarsubspace} shows that $\cS_{\cC}$ (as defined in
 equation \eqref{def:cScC}) is a common reducing subspace for the $A_s$. Lemma \ref{lem:irred}
\ref{it:reducingdecomp} and \ref{it:irred} show that the $A_s$ have the form of equation 
\eqref{eq:1offdiag} and complete the proof.
\hfill\qedsymbol

\subsection{Examples}
Here are two classical examples of circular free spectrahedra.

\begin{example}\rm
{The \bf Bi-disk} is a circular free spectrahedron  given as the positivity set of 
\beq
 L_A(z)
= 
\begin{pmatrix}
  1& z_1     \\
  z_1^* & 1  
  \end{pmatrix}
  \oplus
  \begin{pmatrix}
  1& z_2     \\
  z_2^* & 1  
  \end{pmatrix}
\eeq

\end{example}

\begin{example}\rm
The {\bf Ball}  is a circular free spectrahedron  given as the positivity set of 
\beq
L_A(z)=
\begin{pmatrix}
1 & z_1 & z_2 & \cdots & z_g \\
z_1^* & 1 & 0 & \cdots & 0 \\
z_2^* & 0 & 1 & \cdots & 0 \\
\vdots & \vdots & \vdots & \ddots & \vdots \\
z_g^* & 0 & 0 & \cdots  & 1
\end{pmatrix}
\eeq

\end{example}

\section{A Free Circular Free Spectrahedron is a Matrix Pencil Ball}\label{sec:FreeCircSpec}
This section contains  the proof of Theorem \ref{theorem:circularmain}~\ref{it:freecirc}.
 Throughout, $A\in M_d(\bbF)^g$ is a fixed tuple of $d\times d$ matrices and it is assumed that the free spectrahedron 
 $\cD_A$
 is free circular. We will state precisely and prove in Theorem \ref{thm:EWball} below there is an $N$ (at most $d^3$)
 and a tuple $F\in M_N(\bbF)^g$ such that,  $\cD_A=\cD_E$, where
 \[
  E = \begin{pmatrix} 0 & F \\ 0 & 0 \end{pmatrix}.
 \]
 A separate argument, given as Corollary \ref{cor:mainball}, shows in fact, if $A$ is minimal, then $A$ is unitarily equivalent to $E$. Thus, in any case $E$ can be chosen to be of size $d$. 

\subsection{Free Circular Matrix Convex Sets}\label{sec:mconvex}

In this section we describe free circular matrix convex sets. 
 The set $\Gamma\subset M(\C)^g$ is \df{matrix convex} \cite{EW} if it is \df{closed under direct sums} in the sense that if $X\in \Gamma(n)$ and $Y\in\Gamma(m)$, then the tuple $X\oplus Y$ whose $j$-th entry is\looseness=-1
\[
 X_j\oplus Y_j =\begin{pmatrix} X_j&0\\0& Y_j \end{pmatrix}
\]
is in $\Gamma(n+m)$; and is \df{closed under isometric conjugation} in the sense that  if $X\in\Gamma(n)$ and $V$ is an $n\times m$ isometric matrix, then
\[
 V^*XV = \begin{pmatrix} V^* X_1 V, & \ldots ,& V^* X_g V \end{pmatrix} \in \Gamma(m).
\]
 In the case $0\in\Gamma(1)$, if $\Gamma$ is closed under direct sums and isometric conjugation, then it is closed under contractive conjugation (replacing $V$ isometric with $V$ contractive) \cite{HM04}.  It is not hard to show, if $\Gamma$ is matrix convex, then each $\Gamma(n)$ is convex in the conventional sense.

The  Effros-Winkler  matricial Hahn-Banach separation theorem \cite{EW} says if $\Gamma$ is closed (meaning each $\Gamma(n)$ is closed), matrix convex,  $0\in \Gamma(1)$,  and if $Y\notin M_n(\bbF)^g \setminus \Gamma(n)$, then there exists a tuple $A\in M_n(\bbF)^g$ such that  $L_A(X)\succeq 0$ for $X\in\Gamma$, but $L_A(Y)\not\succeq 0$. In this sense $\cD_A$ is the free analog of a separating hyperplane and a  closed matrix convex set is an intersection of free spectrahedra. 

Proposition \ref{prop:sharp} below is the analog of the Effros-Winkler separation theorem
for free circular matrix convex sets.
It is an effective version of \cite[Proposition 3.5]{BVM}.

\begin{lemma}
\label{lem:VXV}
 Suppose $\cD \subset  M(\bbF)^g$ contains $0$ and is closed with respect to direct sums. If for each pair of positive integers $s,t$, each $Y\in\cD(t)$ and each pair of $t\times s$  isometries $V_1,V_2$ (so $t\ge s$),  $V_2^* X V_1 \in \cD(s)$, then for each pair $m,n$ of positive integers, each $X\in\cD(n)$ and each pair $C_1,C_2$ of $m\times n$ contractions,   $\Cl^* X \Cr\in \cD(m)$.
\end{lemma}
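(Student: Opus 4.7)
The plan is to realize a general contractive compression as an isometric compression of a direct sum involving a zero tuple. This is the standard Halmos dilation idea adapted to the free-setting hypothesis. Concretely, given $X\in\cD(n)$ and contractions $C_1,C_2$ going between the appropriate spaces (so that $C_2^* X C_1$ has size $m$), I enlarge $X$ by padding with zeros and then dilate each $C_i$ to an isometry between the larger space and the target of size $m$.

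First I would observe that $0\in\cD(1)$ together with closure under direct sums implies $0_m\in\cD(m)$ for every positive integer $m$, hence the tuple $Y := X\oplus 0_m$ lies in $\cD(n+m)$. Next, for each contraction $C_i$ I would form the Halmos-type isometric dilation
\[
V_i \;=\; \begin{pmatrix} C_i \\ D_{C_i} \end{pmatrix}, \qquad D_{C_i} := (I - C_i^* C_i)^{1/2},
\]
which is an isometry into the appropriately sized direct-sum space, since $V_i^* V_i = C_i^* C_i + D_{C_i}^2 = I$.

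The third step is a routine block-matrix computation: relative to the decomposition used to define $Y$,
\[
V_2^* Y V_1 \;=\; \begin{pmatrix} C_2^* & D_{C_2} \end{pmatrix}
\begin{pmatrix} X & 0 \\ 0 & 0 \end{pmatrix}
\begin{pmatrix} C_1 \\ D_{C_1} \end{pmatrix}
\;=\; C_2^* X C_1,
\]
the zero blocks annihilating the $D_{C_i}$ terms. Since $Y\in\cD(n+m)$ and $V_1,V_2$ are isometries of the right shape (with $t = n+m \ge m = s$), the hypothesis on isometric bi-compressions delivers $V_2^* Y V_1 \in \cD(m)$, which is exactly the desired conclusion $C_2^* X C_1 \in \cD(m)$.

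I do not expect any genuine obstacle; the only care needed is bookkeeping the dimensions so that the isometric-conjugation hypothesis applies verbatim (in particular, that the direct sum has size at least $m$ and that the dilations $V_i$ land in that larger space). The argument uses only $0\in\cD(1)$, closure under direct sums, and the bi-isometric compression hypothesis — no additional structure such as matrix convexity is invoked.
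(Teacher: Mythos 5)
Your proof is correct and follows essentially the same route as the paper: pad $X$ with a zero block using closure under direct sums, dilate each contraction $C_i$ to the Halmos-type isometry $\binom{C_i}{(I-C_i^*C_i)^{1/2}}$, and observe that the bi-isometric compression of $X\oplus 0$ collapses to $C_2^* X C_1$. The only cosmetic difference is that you spell out that $0\in\cD(1)$ and direct sums yield $0_m\in\cD(m)$, whereas the paper simply asserts $X\oplus 0\in\cD(n+m)$.
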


\begin{proof}
Let positive integers $m,n$, a tuple $X\in\cD(n)$ and a pair of $m\times n$ contractions $C_1,C_2$ be given.
 Let $D_j=(I-C_j^*C_j)^{\frac 12}$.  With this choice of $D_j$, the $(m+n)\times m$ matrices
\[
 V_j =\begin{pmatrix} C_j & D_j \end{pmatrix}
\]
 are isometries.  Since $\cD$ is closed with respect to direct sums and contains $0,$ it follows that  $X\oplus 0\in \cD(n+m).$ Since $\cD$ is closed with respect to multiplying on the left by the adjoint of an isometry and the right by an isometry (of the same sizes), 
\[
  V_2^* \begin{pmatrix} X & 0 \\ 0 & 0 \end{pmatrix} V_1 =  \Cl^* X \Cr \in \cD(m). \qedhere
\]
\end{proof}

Following \cite{BVM} we call 
a graded set $\cC=(\cC(n))_{n\in\N}$  \df{matrix balanced} 
if for each pair $m,n$ of positive integers, each  $X\in \cC(n)$ and  pair of $n\times m$ contractions $C_1,C_2$, the matrix $\Cl^* X \Cr \in \cC(m)$. 
Observe, if $\cC$ is matrix balanced and closed with respect to direct sums, then it is matrix convex and in particular each $\cD_A(n)$ is convex 
in the ordinary sense.

 \begin{prop}
 \label{prop:convexfreecircular}
   A subset  $\cD$ of $M(\F)^g$  is closed with respect to direct sums and  matrix balanced if and only if it is matrix convex,  free circular and contains $0$.
\end{prop}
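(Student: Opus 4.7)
The plan is to verify the two implications directly from the definitions, using Lemma~\ref{lem:VXV} to handle the nontrivial direction.

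For the easy direction ($\Rightarrow$), I will assume $\cD$ is nonempty, closed under direct sums, and matrix balanced. Each required property then falls out as a single specialization of the matrix balanced axiom. Taking $C_1 = C_2 = V$ for an isometry $V$ yields isometric conjugation, which together with closure under direct sums gives matrix convexity. Taking $C_1 = I_n$ and $C_2 = U^*$ for an $n \times n$ unitary $U$ (both are contractions) gives $U X \in \cD(n)$, i.e.\ free circularity. Finally, taking $C_1 = C_2 = 0$ (as $n \times m$ zero contractions) applied to any $X \in \cD(n)$ produces $0 \in \cD(m)$ for every $m$.

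For the harder direction ($\Leftarrow$), the strategy is to invoke Lemma~\ref{lem:VXV}, which reduces ``matrix balanced'' to showing that $V_2^* Y V_1 \in \cD(s)$ whenever $Y \in \cD(t)$ and $V_1, V_2\colon \C^s \to \C^t$ are isometries. I expect the main obstacle to be extracting right multiplication by unitaries from free circularity, which supplies only left multiplication. The key identity is
\[
  Z V \;=\; V\,(V^* Z V),
\]
valid whenever $V \in M_t(\F)$ is unitary. Indeed, $V^* Z V \in \cD(t)$ by matrix convexity (isometric conjugation by the isometry $V$), and then left multiplication by the unitary $V$ (free circularity) places $Z V \in \cD(t)$. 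Applying free circularity once more on the left shows $V_2^* Z V_1 \in \cD(t)$ for any pair of unitaries $V_1, V_2 \in M_t(\F)$.

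To handle the general isometric case, given isometries $V_1, V_2\colon \C^s \to \C^t$, I will extend each $V_i$ to a unitary $\tilde V_i \in M_t(\F)$ by appending orthonormal columns; the previous paragraph then gives $\tilde V_2^* Y \tilde V_1 \in \cD(t)$. Writing $V_i = \tilde V_i J$ for the standard isometric inclusion $J \colon \C^s \hookrightarrow \C^t$ (so that $J^* \tilde V_2^* = V_2^*$ and $\tilde V_1 J = V_1$), isometric conjugation by $J$ (matrix convexity) produces
\[
  J^* \bigl(\tilde V_2^* Y \tilde V_1\bigr) J \;=\; V_2^* Y V_1 \in \cD(s),
\]
and Lemma~\ref{lem:VXV} then yields that $\cD$ is matrix balanced, completing the proof.
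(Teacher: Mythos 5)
Your proof is correct and follows essentially the same route as the paper: the forward direction by specializing the contractions, and the converse by reducing to isometries via Lemma~\ref{lem:VXV} and combining free circularity with isometric conjugation. The only cosmetic difference is that you first establish the unitary-pair case and then compress by the inclusion $J$, whereas the paper finds a single unitary $W$ with $WC_1 = C_2$ and writes $C_2^* X C_1 = C_1^*(W^*X)C_1$ directly, saving one step.
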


\begin{proof}
 Choosing $\Cl=Z^*$  and $\Cr=I$  shows if $\cD$ is matrix balanced, then it is free circular.  Choosing $\Cr=\Cl$ shows matrix balanced implies matrix convex. Choosing either $\Cr$ or $\Cl$ equal zero shows $0\in\cD$.  Hence, $\cD$ matrix balanced implies matrix convex, free circular and $0\in \cD$. 

In view of Lemma \ref{lem:VXV}, it suffices to prove the converse under the added assumption that the $C_j$ are isometries. In this case, there exists an $n\times n$ unitary matrix $W$ such that $W\Cr=\Cl$.  Letting $Z=W^*$ gives $Z X \in \cD(n)$  by the free circular hypothesis. Thus $\Cr^* (ZX)\Cr\in \cD(m)$ by the matrix convex assumption. Finally, as $\Cr^* Z= \Cl^*$ the result follows.
\end{proof}

  Given $\epsilon>0$,
  the {\bf free $\epsilon$-neighborhood of $0$},
  denoted  $\cN_\epsilon,$ is the graded set 
  $(\cN_\epsilon(n))_{n=1}^\infty$ where
 \[
   \cN_\epsilon(n) =\{X\in M_n(\bbF)^g : \sum \| X_j \| < \epsilon  \}. 
 \]

\begin{prop}
 \label{prop:sharp}
 Let $\cC=(\cC(n))$ denote a  
free circular 
 matrix convex
 subset of the graded set  $M(\bbF)^g$ that 
 contains 
 a free $\epsilon$-neighborhood of $0$.
 If $\oX\in M_n(\bbF)^g$ is in the boundary of $\cC(n)$,
 then there is a tuple $Q\in M_n(\bbF)^g$ such that 
 $\|\Lambda_Q(Y)\|\le 1$ for all $m$ and
 $Y\in\cC(m)$  and such that  $\|\Lambda_Q(\oX)\|=1$. 
\end{prop}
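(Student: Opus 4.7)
The strategy is to adapt the proof of the Effros-Winkler matricial Hahn-Banach separation theorem to the free circular setting, exploiting the matrix balanced property of $\cC$ (Proposition \ref{prop:convexfreecircular}) to replace the resulting monic pencil with a homogeneous one. The coefficient size $n$ in the conclusion matches the usual Effros-Winkler phenomenon that a level-$n$ separator carries coefficients of size $n$.

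First I would reduce to the case that $\cC$ is closed, noting $\oX\in\partial\overline{\cC}(n)$. Since $\cC$ is matrix convex with $0$ in its interior (thanks to the free $\epsilon$-neighborhood of $0$), the set $\cC(n)$ is star-shaped about $0$, whence $(1+1/k)\oX\notin\cC(n)$ for every $k\ge 1$. Applying the Effros-Winkler theorem at level $n$ to each such external point produces monic pencils $L_{A_k}$ with $A_k\in M_n(\bbF)^g$ satisfying $L_{A_k}(Y)\succeq 0$ on $\cC$ while $L_{A_k}((1+1/k)\oX)\not\succeq 0$. The $\epsilon$-neighborhood of $0$ gives a uniform bound on the $A_{k,j}$ (by testing the pencil on scalar tuples $\epsilon e^{i\theta}e_j$), so a subsequence converges to some $A\in M_n(\bbF)^g$ with $L_A(Y)\succeq 0$ on $\cC$ and $L_A(\oX)$ possessing a nontrivial kernel vector $v_0$. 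Circularity of $\cC$ then gives $L_A(e^{i\theta}Y)\succeq 0$ for all $\theta\in\R$, i.e., the numerical-radius bound $w(\Lambda_A(Y))\le 1/2$, and the standard estimate $\|B\|\le 2w(B)$ yields $\|\Lambda_A(Y)\|\le 1$ for every $Y\in\cC$. Thus $Q:=A$ is a candidate meeting the required upper bound.

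It remains to verify $\|\Lambda_Q(\oX)\|=1$. The null-vector condition $L_A(\oX)v_0=0$ forces $\langle\Lambda_A(\oX)v_0,v_0\rangle=1/2$, saturating the numerical radius of $\Lambda_A(\oX)$ at $v_0$. The full free circularity of $\cC$ (i.e., $U\oX\in\cC$ for all unitaries $U\in M_n(\bbF)$) is then brought in to force $\Lambda_A(\oX)$ into a shift-like shape where $\|B\|=2w(B)$, yielding $\|\Lambda_A(\oX)\|=1$. The main obstacle is precisely this tightness step: scalar circularity alone only gives $\|\Lambda_A(\oX)\|\in[1/2,1]$, and the free unitary invariance must be used substantively. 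An alternative route that cleanly sidesteps the case analysis is to recognize $\cC$ as the unit ball of an operator-space matrix norm on $\bbF^g$ (via its matrix balanced structure) and invoke an Arveson--Wittstock-style Hahn-Banach extension, producing a complete contraction $\phi:\bbF^g\to M_n(\bbF)$ with $\|\phi_n(\oX)\|=1$, from which $Q_j:=\phi(e_j)$ gives the desired tuple.
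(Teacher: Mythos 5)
Your first route, applying Effros--Winkler at level $n$ and then invoking circularity, has a genuine gap at exactly the point you flag: the tuple $A$ produced by Effros--Winkler does \emph{not} in general satisfy $\|\Lambda_A(\oX)\|=1$, and free circularity does not rescue it. Take $g=1$, $\bbF=\C$, $\cC$ the free (closed) unit ball, $n=1$, and $\oX=1$. The monic pencil $L_a(x)=1-ax-\bar a x^*$ with $a\in\C$ is nonnegative on $\cC(1)$ iff $|a|\le\tfrac12$, and it kills $\oX$ iff $\operatorname{Re}(a)=\tfrac12$; so the Effros--Winkler separator is $a=\tfrac12$, and $\|\Lambda_a(\oX)\|=\tfrac12\neq 1$. (At level $1$, free circularity is just ordinary circularity, so no further unitary invariance is available.) Hence the tuple you want is not $A$ but a scalar multiple of it, and to certify the renormalization $Q=A/\sup_{\cC}\|\Lambda_A(\cdot)\|$ you would still have to prove $\|\Lambda_A(\oX)\|=\sup_{\cC}\|\Lambda_A(\cdot)\|$, which does not follow from $L_A(\oX)$ having a kernel together with the numerical-radius bound. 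That is, the generic Effros--Winkler separator only encodes information about $\operatorname{Re}\Lambda_A$, and the passage from a null vector of $I-2\operatorname{Re}\Lambda_A(\oX)$ to norm-maximality of $\Lambda_A(\oX)$ is not filled in.

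Your fallback -- recognizing $\cC$ as the unit ball of an operator space and invoking Ruan/Arveson--Wittstock -- is essentially the route the paper takes in Section~\ref{sec:FreeCircSpec}: it cites \cite[Proposition 3.5]{BVM} (which rests on Ruan's representation theorem) to write $\cC=\mathcal B_F$ for some operator tuple $F$, and then compresses $F$ to an $n$-dimensional subspace carrying a near-maximizing vector for $\Lambda_F(\oX)$ to obtain $Q\in M_n(\bbF)^g$. Your sketch conflates this with an Arveson--Wittstock extension and leaves out the compression step, which is where the claimed size bound $Q\in M_n(\bbF)^g$ actually comes from. The paper also gives a self-contained proof (Proposition~\ref{prop:2sharp} in Appendix~\ref{sec:FreeCircDomain}) that replaces Effros--Winkler by a refined separation lemma (Lemma~\ref{lem:cT}) tailored to matrix balanced sets: it produces \emph{two} positive semidefinite traces $T_1,T_2$ and an off-diagonal pencil $\begin{pmatrix} T_1\otimes I & -\Lambda_B \\ -\Lambda_B^* & T_2\otimes I\end{pmatrix}$, and it is this asymmetric two-sided form, not the symmetric $I-\Lambda_A-\Lambda_A^*$ of Effros--Winkler, that encodes the homogeneous pencil ball structure and yields the sharp $\|\Lambda_Q(\oX)\|=1$ after normalizing by $T_p^{-1/2}$. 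In short: Effros--Winkler by itself is the wrong separator here; one needs either Ruan plus compression, or the balanced variant of the separation argument.
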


\begin{proof}
By Proposition \ref{prop:convexfreecircular} and  \cite[Proposition 3.5]{BVM},
 $\cC = \mathcal B_F$ for some operator tuple $F$ acting on a Hilbert space $H$.   Here $\mathcal B_F$ is the operator pencil ball determined by $F$, i.e.,
\[
\mathcal B_F = \Big\{X\in M(\C)^g : \big\|\sum_j F_j\otimes X_j\big\|\leq1\Big\}.
\]
Let $\Lambda_F(x)=\sum_{j=1}^g F_j x_j$ denote the homogeneous operator pencil determined by $F.$
 Since $\oX$ is in the boundary of $\mathcal B_F$, we see that $\|\Lambda_F(\oX)\|=1$. Hence, there exists a sequence of unit vectors $\gamma_k \in H \otimes\C^n$ such that $(\|\Lambda_F(\oX)\gamma_k\|)_k$ tends to $1$. Fix $k$. Write $\gamma_k = \sum_{j=1}^n \gamma_{k,j}\otimes e_j$. Let $\Gamma_k$ denote an $n$ dimensional subspace of $H$ containing the span of $\{\gamma_{k,1},\dots,\gamma_{k,n}\}$ (if the dimension of $H$ is less than $n$, then there is nothing to prove) and let $G^k= V^* FV\in M_n(\C)^g$, where $V:\C^n\to \Gamma_k$ is   an isometry. It follows that $(\|\Lambda_{G^k}(\oX)\|)_k$ tends to $1$. By compactness, $(G^k)$ has a subsequence which converges in norm to some $G\in M_n(\C)^g$. It follows that $\|\Lambda_{G}(\oX)\|=1$ and $\Lambda_{G}$ is at most one in norm on $\cC$.
\end{proof}

The authors of \cite{BVM} obtain \cite[Proposition 3.5]{BVM}  as a consequence of Ruan's 
representation theorem for operator spaces (see \cite[Theorem 2.3.5]{ER} or \cite[Chapter 13]{Pau}). 
We give an elementary self-contained proof of
Proposition \ref{prop:sharp} in Appendix \ref{sec:FreeCircDomain}.

\subsection{Criteria for Membership in a Free Spectrahedron}
This section contains three simple lemmas preliminary to the proof
 of Theorem \ref{theorem:circularmain}~\ref{it:freecirc}.

\begin{lemma}
 \label{lem:incDA}
 A tuple $X\in M_n(\bbF)^g$ lies in $\cD_A(n)$ if and only if for every subspace $M$ of $\mathbb C^n$ of dimension $e\le d$, the tuple $V^*XV$ lies in $\cD_A(e),$ where $V:M\to \mathbb C^n$ is the inclusion map. 
\end{lemma}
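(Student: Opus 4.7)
The plan is to use the tensor compression identity
\[
 L_A(V^*XV) \;=\; (I_d\otimes V)^* \, L_A(X)\, (I_d\otimes V),
\]
which follows term by term from the definition \eqref{eq:evalPencil} since $V^*(I_n)V=I_e$ and $V^*(X_j)V$ is precisely the compressed entry. Once this identity is in hand, both directions become short.

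For the forward implication, if $X\in\cD_A(n)$ then $L_A(X)\succeq 0$, and conjugating a positive semidefinite matrix by $(I_d\otimes V)$ yields a positive semidefinite matrix. Hence $L_A(V^*XV)\succeq 0$, i.e., $V^*XV\in\cD_A(e)$. This direction requires no bound on $e$; the dimension restriction $e\le d$ is only relevant for the converse.

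For the reverse implication, I will argue by contrapositive. Suppose $X\notin\cD_A(n)$, so there exists a vector $\xi\in\C^d\otimes\C^n$ with $\langle L_A(X)\xi,\xi\rangle<0$. Write
\[
 \xi \;=\; \sum_{i=1}^{d} e_i\otimes \xi_i, \qquad \xi_i\in\C^n,
\]
using an orthonormal basis $\{e_i\}$ of $\C^d$. Let $M=\mathrm{span}\{\xi_1,\dots,\xi_d\}\subseteq\C^n$, so $e:=\dim M\le d$, and let $V:M\to\C^n$ be the inclusion. Then each $\xi_i$ lies in the range of $V$, so there exists $\eta\in\C^d\otimes M$ with $\xi=(I_d\otimes V)\eta$. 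Applying the compression identity,
\[
 \langle L_A(V^*XV)\eta,\eta\rangle \;=\; \langle L_A(X)\xi,\xi\rangle \;<\; 0,
\]
so $V^*XV\notin\cD_A(e)$. This contradicts the hypothesis and completes the proof.

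I do not expect any real obstacle here; the lemma is essentially the observation that positivity of $L_A(X)$ can be tested on vectors whose $d$ components in $\C^n$ span a subspace of dimension at most $d$. The only point that deserves care is writing down the compression identity cleanly and noting that the dimension bound $e\le d$ comes from having exactly $d$ coordinate blocks in $\C^d\otimes\C^n$.
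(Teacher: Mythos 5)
Your proof is correct and is essentially the paper's argument: both rest on decomposing a vector in $\C^d\otimes\C^n$ into $d$ components, taking $M$ to be their span (so $\dim M\le d$), and using the compression identity $L_A(V^*XV)=(I_d\otimes V)^*L_A(X)(I_d\otimes V)$. The only cosmetic difference is that you run the nontrivial direction as a contrapositive where the paper argues directly.
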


\begin{proof}
 To prove the non-trivial direction,  let a vector $v \in\mathbb C^d\otimes \mathbb C^n$ be given. Write  $v=\sum_{j=1}^d e_j\otimes v_j$, where $\{e_1,\dots,e_d\}$ is an orthonormal basis for $\mathbb C^d$. Let $M$ denote the span of $\{v_1,\dots,v_d\}$. Thus $M$ has dimension $e\le d$.  Let $V$ denote the inclusion of $M$ into $\mathbb C^n$.  Since $V^*XV\in \cD_A(e)$ by assumption, 
\[
  \langle L_A(X) v,v\rangle = \langle L_A(V^*XV)v,v\rangle \ge 0
\]
and the desired conclusion follows. 
\end{proof}

Before proceeding we address a technical point related to the Kronecker product that occurs in the following lemma. Note that 
for any $B_1, B_2 \in M_\ell(\C)$ and $Z \in M_{\nu}(\C)$ we have the identity
\beq
\label{eq:tensordirectsumidentity}
(B_1 \oplus B_2) \otimes Z=(B_1 \otimes Z) \oplus (B_2 \oplus Z).
\eeq
On the other hand, while $Z\otimes (B_1\oplus B_2)\neq
(Z\otimes B_1)\oplus (Z\otimes B_2)$, the fact that these two expressions are unitarily equivalent suffices for our arguments.
In fact, there is a permutation matrix, often called the canonical shuffle,
$\Pi_{\ell, \nu}\in M_{\nu\ell}(\C)$
 such that $B \otimes Z= \Pi_{\ell, \nu}^* (Z \otimes B) \Pi_{\ell, \nu}$
for any matrices $B \in M_\ell(\C)$ and $Z \in M_{\nu}(\C)$. 
We write $B\otimes Z\cs Z\otimes B$.

\begin{lemma}
\label{lem:lambdaball}
 {Suppose $\cD_A$ is matrix balanced, closed with respect to direct sums  and} $\Lambda=\La_F$ is a homogeneous linear pencil. 
\begin{enumerate}[label={\rm(\roman*)}]
 \item \label{it:0} If $\|\Lambda(X)\|>1$ for all $X\in M_d(\bbF)^g\setminus \cD_A(d)$, then, $\|\Lambda(Y)\|>1$ for each $1\le e\le d$ and 
    $Y\in M_e(\C)^g\setminus  \cD_A(e)$.
 \item \label{it:notinA} If $\|\Lambda(X)\|> 1$ for all $X\in M_d(\bbF)^g \setminus  \cD_A(d)$, then $\|\Lambda(X)\|> 1$ for all $X\notin \cD_A$.
 \item \label{it:lambdaball} If  $\|\Lambda(X)\|\le 1$ for all $X\in \cD_A$ and $\|\Lambda(X)\|=1$ for all $X\in \partial \cD_A(d)$, then $\cD_A=\cD_E$, where
$
 E = \begin{pmatrix} 0 & F \\ 0 & 0 \end{pmatrix}.
$
\end{enumerate}
\end{lemma}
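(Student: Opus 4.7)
My plan is to prove the three parts in sequence, each building on its predecessor. A direct Schur complement computation gives
\[
L_E(X) = \begin{pmatrix} I & -\Lambda(X) \\ -\Lambda(X)^* & I \end{pmatrix},
\]
which shows $\cD_E = \{X : \|\Lambda(X)\|\le 1\}$, the operator pencil ball associated to $F$. So (iii) amounts to identifying $\cD_A$ with this pencil ball, once the norm bound produced by (i)--(ii) propagates through all dimensions. For (i), given $Y\in M_e(\C)^g\setminus \cD_A(e)$ with $e<d$, I would form the inflated tuple $X := Y\oplus 0_{d-e} \in M_d(\C)^g$. Applying the canonical shuffle termwise, $L_A(X)\cs L_A(Y)\oplus I_{d(d-e)}$ and $\Lambda(X)\cs \Lambda(Y)\oplus 0$; the first shows $X\notin \cD_A(d)$ and the second gives $\|\Lambda(X)\|=\|\Lambda(Y)\|$, so the hypothesis at dimension $d$ yields $\|\Lambda(Y)\|>1$.

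For (ii), suppose $Y\in M_n(\C)^g\setminus \cD_A(n)$ with $n$ arbitrary. Lemma \ref{lem:incDA} supplies a subspace $M\subseteq \C^n$ of dimension $e\le d$ whose inclusion $V:M\hookrightarrow\C^n$ satisfies $V^*YV\notin \cD_A(e)$. Part (i) then gives $\|\Lambda(V^*YV)\|>1$. Writing $\Lambda(V^*YV) = (I_d\otimes V)^*\Lambda(Y)(I_d\otimes V)$ and observing that $I_d\otimes V$ is an isometry, we conclude $\|\Lambda(Y)\|\ge \|\Lambda(V^*YV)\|>1$, as required.

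For (iii), the Schur complement identification immediately turns the first hypothesis into $\cD_A\subseteq \cD_E$. For the reverse inclusion I would verify the hypothesis of (ii) as follows: given $Y\in M_d(\C)^g\setminus\cD_A(d)$, the convexity of $\cD_A(d)$ together with $L_A(0)=I\succ 0$ and $L_A(Y)\not\succeq 0$ forces the segment $\{tY : 0\le t\le 1\}$ to meet $\partial\cD_A(d)$ at a unique $t^*\in(0,1)$; the boundary hypothesis and homogeneity of $\Lambda$ then give $\|\Lambda(Y)\|=1/t^*>1$. Part (ii) extends this to all sizes, so $Y\notin\cD_A$ implies $\|\Lambda(Y)\|>1$, i.e.\ $Y\notin \cD_E$. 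The only nontrivial step throughout the argument is the dimension reduction via Lemma \ref{lem:incDA}; the remainder consists of the Schur complement identification, a routine direct-sum calculation, and an elementary convexity argument along a ray through the origin.
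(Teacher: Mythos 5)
Your proposal is correct and follows essentially the same route as the paper's proof: the $Y\oplus 0$ padding with a canonical shuffle for (i), the reduction via Lemma \ref{lem:incDA} together with isometry contractivity for (ii), and the ray-through-the-origin boundary-scaling argument combined with the identification $\cD_E=\{X:\|\Lambda_F(X)\|\le1\}$ for (iii). Your explicit Schur complement observation at the start simply makes visible what the paper uses implicitly.
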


\begin{proof}
 To prove item \ref{it:0}, suppose $1\le e\le d$ and $Y\in M_e(\C)^g\setminus  \cD_A(e)$. Thus $L_A(Y)\not\succeq 0$. Let $0$ denote the tuple of zeros in $M_{d-e}(\C)^g$ and let  $X=Y\oplus 0.$ Now $X\notin\cD_A(d)$ since $L_A(X) \cs L_A(Y)\oplus I\not\succeq 0.$  By hypothesis, $\|\Lambda(X)\|>1$. But $\Lambda(X)\cs \Lambda(Y)\oplus 0$. Hence, $\|\Lambda(Y)\|>1$.

By item \ref{it:0}, to prove item \ref{it:notinA} it may be assumed that $\|\Lambda(X)\|>1$ for all $1\le e\le d$ and $X\in M_e(\C)^g\setminus \cD_A(e)$.
 Let $n$ and $Y\in M_n(\bbF)^g \setminus \cD_A(n)$ be  given. By Lemma \ref{lem:incDA}, there is a subspace $M$ of
  dimension $e\le d$ such that, $X=V^*YV\notin \cD_A(e)$, where $V$ is  the inclusion of $M$ into $\bbF^d$.  Hence, by assumption,
 $\|\Lambda(X)\|>1$.  Hence there is a unit vector $v\in \mathbb C^N\otimes M\subset \mathbb C^N\otimes \C^n$, where $N$ is the size of the pencil $\Lambda$, such that
$\|\Lambda(X)v\|>1$.  Consequently,
\[
 1<\|\Lambda(X)v\| = \|(I\otimes V)^* \Lambda(Y)(I\otimes V)v\| \le \|\Lambda(Y)\|\, \|v\| =\|\Lambda(Y)\|.
\]

  To prove item \ref{it:lambdaball}, first note that the hypotheses 
immediately imply $\cD_A\subset \cD_E$. To prove the reverse inclusion, 
observe, if $X\notin \cD_A(d)$, then there is an $0<r< 1$ such that
$rX\in \partial \cD_A(d)$ (since $0$ is in $\cD_A(d)$ and $\cD_A(d)$
is convex) and hence $\|\Lambda(X)\|=\frac{1}{r}> 1$. Thus, if
$X\notin \cD_A(d)$, then $\|\Lambda(X)\|>1$.  It follows from item
\ref{it:notinA} that  $X\notin \cD_A$ implies
$X\notin \cD_E$. Hence $\cD_E\subset\cD_A$ and the proof is complete.
\end{proof}

\subsection{Free Circular Free Spectrahedra}
  The final part of Theorem \ref{theorem:circularmain}, stated in a somewhat different form below as Corollary \ref{cor:mainball}, is proved in this subsection.

\begin{theorem}[Theorem~\ref{theorem:circularmain}~\ref{it:freecirc}]
 \label{thm:EWball}
 If $\cD_A$ is a free circular spectrahedron, then there exists a homogeneous linear pencil $\Lambda$ such that $\|\Lambda(X)\|\le 1$ if and only if $X\in \cD_A$. Moreover, $\Lambda$ is the direct sum of at most $d^2$ homogeneous linear pencils of size (at most) $d$.
\end{theorem}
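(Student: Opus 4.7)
The plan is to combine Proposition~\ref{prop:sharp} with Lemma~\ref{lem:lambdaball}~\ref{it:lambdaball} via a finiteness reduction. First I would verify the hypotheses of Proposition~\ref{prop:sharp}: since $L_A(X)=I-\Lambda_A(X)-\Lambda_A(X)^*$ is close to the identity for $X$ of small norm, $\cD_A$ automatically contains a free $\epsilon$-neighborhood of $0$. Being free circular and matrix convex with $0\in\cD_A(1)$, Proposition~\ref{prop:convexfreecircular} shows $\cD_A$ is matrix balanced, so Proposition~\ref{prop:sharp} applies.

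Applying Proposition~\ref{prop:sharp} at level $d$, for each boundary point $\oX\in\partial\cD_A(d)$ one obtains a tuple $Q_{\oX}\in M_d(\C)^g$ with $\|\Lambda_{Q_{\oX}}(Y)\|\le 1$ for all $Y\in\cD_A$ and $\|\Lambda_{Q_{\oX}}(\oX)\|=1$. Every such $Q_{\oX}$ lies in the compact convex balanced set
\[
\cQ=\{Q\in M_d(\C)^g:\|\Lambda_Q(Y)\|\le 1 \text{ for every } Y\in\cD_A\}.
\]
The central step is to extract from $\cQ$ a finite subfamily $Q_1,\ldots,Q_k$, with $k\le d^2$, such that $\max_i\|\Lambda_{Q_i}(\oX)\|=1$ for every $\oX\in\partial\cD_A(d)$. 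I would proceed greedily: start with any $Q_1\in\cQ$; as long as there exists some boundary point $\oX$ satisfying $\|\Lambda_{Q_j}(\oX)\|<1$ for all previously chosen $Q_j$, append $Q_{i+1}=Q_{\oX}\in\cQ$ produced by Proposition~\ref{prop:sharp}. Termination in at most $d^2$ steps should follow from a dimension count in $M_d(\C)$, whose complex dimension is $d^2$: each new pencil must contribute a genuinely new matricial ``direction'' relative to the previous ones, and only $d^2$ independent directions are available to pencils of size $d$. Making this dimension argument rigorous, together with the interplay between the supporting pencils in $\cQ$ and the spectrahedral boundary cut out by $L_A$, is the main obstacle.

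Once the finite family is constructed, set $\Lambda=\Lambda_{Q_1}\oplus\cdots\oplus\Lambda_{Q_k}$; since the norm of a block-diagonal operator equals the maximum of the norms of its blocks, $\|\Lambda(\cdot)\|\le 1$ on $\cD_A$ and $\|\Lambda(\oX)\|=1$ on $\partial\cD_A(d)$. Lemma~\ref{lem:lambdaball}~\ref{it:lambdaball} then yields $\cD_A=\cD_E$ for $E=\begin{pmatrix}0&F\\0&0\end{pmatrix}$ with $\Lambda_F=\Lambda$; equivalently, $\cD_A=\cB_F=\{X:\|\Lambda_F(X)\|\le 1\}$. By construction $\Lambda$ is a direct sum of at most $d^2$ homogeneous pencils each of size at most $d$, as required.
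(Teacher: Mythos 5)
Your overall plan---combine Proposition~\ref{prop:sharp} with Lemma~\ref{lem:lambdaball}~\ref{it:lambdaball} by assembling a finite direct sum of separating pencils that saturates on the entire boundary of $\cD_A(d)$---matches the paper's strategy. However, the step you flag as the ``main obstacle'' is a genuine gap, and the naive dimension count you suggest cannot be made to work as stated.

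The difficulty: the separating tuples live in $\cQ \subset M_d(\C)^g$, which has complex dimension $g d^2$, not $d^2$; and more importantly there is no linear-algebraic constraint that forces each new $Q_{i+1}$ from the greedy process to be linearly independent from $Q_1,\dots,Q_i$, nor any clear covering argument that makes the process terminate. The saturation sets $\{\oX \in \partial\cD_A(d) : \|\Lambda_{Q_i}(\oX)\| = 1\}$ are closed, not open, so one cannot extract a finite subcover of $\partial\cD_A(d)$; and working with a countable dense set of boundary points alone (as the proof of Theorem~\ref{thm:OperatorBall} does) only produces an infinite direct sum, i.e.\ an \emph{operator} pencil ball, with no control on size. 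What is missing is the device that converts ``infinitely many boundary points'' into ``at most $d^2$ pencils.''

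The paper's fix introduces the \emph{detailed boundary}: pairs $(X,v)$ with $X\in\partial\cD_A(d)$ and $v\ne 0$ a null vector of $L_A(X)$. Lemmas~\ref{lem:finite} and~\ref{lem:countable} produce, for any countable collection $\{(X^j,v^j)\}$ of detailed boundary points, a \emph{single} pencil $\Lambda^r$ of size $d$ together with a nonzero matrix $\eta^r\in\Fdd$ such that $\|\Lambda^r(X^j)\|=1$ whenever $[\eta^r,v^j]\ne 0$. The dimension count then happens not in $M_d(\C)^g$ but in the $d^2$-dimensional space $\Fdd$: the proof tracks the subspaces $\cP_r=\{c\in\Fdd : [c,v^j]=0 \text{ for } j\in J_r\}$, which strictly increase at each step of the recursion because $\eta^r\in\cP_r\setminus\cP_{r-1}$. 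A strictly increasing chain of subspaces of $\Fdd$ has length at most $d^2$, so the recursion terminates after at most $d^2$ steps. This bookkeeping via null vectors and the subspace filtration of $\Fdd$ is what your proposal is missing, and without it neither the finiteness of the construction nor the claimed bound on the size of $\Lambda$ is established.
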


\begin{cor}
 \label{cor:mainball}
  Suppose $A\in M_d(\F)^g$.   If $L_A$ is a minimal defining pencil for $\cD_A$ and $\cD_A$ is free circular, then there exists positive integers $s,t$ such that $s+t=d$ and a $g$-tuple $F$ of $s\times t$  matrices with entries from $\F$ such that, 
\[
 A \simu \begin{pmatrix} 0 & F \\ 0 & 0 \end{pmatrix}.
\]
\end{cor}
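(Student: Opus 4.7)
The plan is to combine Theorem~\ref{thm:EWball} with the Gleichstellensatz (Proposition~\ref{prop:Gsatz}) and then exploit the very rigid algebraic structure of the resulting $E$. First I would invoke Theorem~\ref{thm:EWball} to produce a tuple $E = \begin{pmatrix} 0 & F \\ 0 & 0\end{pmatrix}$ with $\cD_A = \cD_E$. Since $L_A$ is minimal defining for $\cD_A$, Proposition~\ref{prop:Gsatz} then supplies a unitary $U$ and an auxiliary tuple $J$ such that $U^* E_j U = A_j \oplus J_j$ for $j=1,\ldots,g$. The tuple $E$ is evidently \emph{two-step nilpotent} in the strong sense that $E_j E_k = 0$ and $E_j^* E_k^* = 0$ for all $j,k$, and the displayed similarity immediately transfers these identities to $A$.

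Next I would extract the two-block form for $A$ from the identities $A_j A_k = 0 = A_j^* A_k^*$ by a direct linear-algebra decomposition. Set $\cR := \sum_j \operatorname{ran}(A_j)$ and $\cR' := \sum_j \operatorname{ran}(A_j^*)$. Each $A_j A_k = 0$ says $\operatorname{ran}(A_k) \subset \ker(A_j)$, so summing yields $\cR \subset \bigcap_j \ker(A_j) = (\cR')^\perp$; thus $\cR \perp \cR'$. Letting $\cN := (\cR \oplus \cR')^\perp$ produces the orthogonal decomposition $\C^d = \cR \oplus \cN \oplus \cR'$. A quick check on summands shows each $A_j$ annihilates both $\cR$ and $\cN$ (using $\cN \perp \cR'$ to place $\cN$ inside $\bigcap_j \ker A_j$) while sending $\cR'$ into $\cR$; hence in this decomposition $A_j$ has the shape
\[
A_j \;=\;\begin{pmatrix} 0 & 0 & \widetilde F_j \\ 0 & 0 & 0 \\ 0 & 0 & 0 \end{pmatrix}.
\]

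The final step is to eliminate the middle summand $\cN$ using the minimality of $L_A$. Since both $A_j$ and $A_j^*$ annihilate $\cN$, it is a common reducing subspace for $A$, so $A \simu \widehat A \oplus 0_\cN$ with $\widehat A$ acting on $\cR \oplus \cR'$. Consequently $L_A \simu L_{\widehat A} \oplus I_\cN$, and because $I_\cN \succeq 0$ identically, $\cD_{\widehat A} = \cD_A$. Minimality of $L_A$ then forces $\cN = 0$. Taking $s = \dim \cR$ and $t = \dim \cR'$ gives the desired unitary equivalence $A \simu \begin{pmatrix} 0 & F' \\ 0 & 0\end{pmatrix}$.

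The main obstacle I anticipate is conceptual rather than computational: one has to notice that the very concrete algebraic identity $E_j E_k = 0$, manifest from the block form of $E$, passes through the abstract Gleichstellensatz to the minimal tuple $A$, and that this single identity---combined with the minimality hypothesis used to kill the middle block---is strong enough to force the full two-block structure without any further appeal to the free circular symmetry.
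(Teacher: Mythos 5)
Your proof is correct, and it takes a genuinely different route to the two-block form than the paper does, though both begin by invoking Theorem~\ref{thm:EWball} to produce a tuple $B=\begin{pmatrix}0&G\\0&0\end{pmatrix}$ with $\cD_A=\cD_B$. The paper works inside the ambient space where $B$ lives: it first reduces to $\ker\bigl(\sum_j G_j^*G_j\bigr)=0=\ker\bigl(\sum_j G_j G_j^*\bigr)$, locates a reducing subspace $\mathcal{E}\subset\F^m\oplus\F^n$ on which $B$ restricts to a minimal defining tuple $E\simu A$, and then shows $\mathcal{E}$ must split as $\mathcal{G}\oplus\mathcal{G}_*$ (the coordinate projections of $\mathcal{E}$) by exploiting the reducing property together with the injectivity of $\sum G_j^*G_j$. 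You instead push the structure forward through Proposition~\ref{prop:Gsatz}: the unitary equivalence $U^*E_jU=A_j\oplus J_j$ immediately carries the manifest algebraic identity $E_jE_k=0$ (and its adjoint) to $A_jA_k=0=A_j^*A_k^*$, after which a clean range/kernel decomposition $\C^d=\cR\oplus\cN\oplus\cR'$ (with $\cR=\sum_j\operatorname{ran}A_j$, $\cR'=\sum_j\operatorname{ran}A_j^*$, orthogonality from $\operatorname{ran}A_k\subset\ker A_j=(\operatorname{ran}A_j^*)^\perp$) exhibits the block shape, and minimality kills $\cN$ exactly as you say. Your version is arguably cleaner: it works entirely on the target space $\C^d$, avoids the preliminary nondegeneracy reduction on $G$, and sidesteps the analysis of how a reducing subspace sits inside $\F^m\oplus\F^n$. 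The only inherited wrinkle (present in the paper's version too, not a fault of yours) is the degenerate case $A=0$, $d=1$, where $\cR=\cR'=0$ and the claim ``$s,t\ge1$ with $s+t=d$'' cannot literally hold.
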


\begin{proof}[Proof of Corollary~\ref{cor:mainball}]
 By Theorem \ref{thm:EWball}, there exist positive integers $m,n$ and a tuple $G$ of $m\times n$ matrices  such that $\cD_A=\cD_B,$ where
\[
 B =\begin{pmatrix}0& G\\ 0& 0\end{pmatrix}.
\]
 In particular, the size of $B$ is $(m+n)\times (m+n)$.  Next observe,  without loss of generality, it may be assumed that $\ker(G)=\{0\}=\ker(G^*)$. 

There is a reducing subspace $\mathcal E\subset \F^m\oplus \F^n$ such that, letting $E$ denote the restriction of $B$ to $\mathcal E$, the monic linear pencil $L_E$ is minimal defining for $\cD_A$ 
(cf.~Proposition \ref{prop:Gsatz}).
   Hence by loc.~cit.~$A$ and $E$ are unitarily equivalent.  Let $\mathcal G$ denote the projection of $\mathcal E$ onto the first coordinate and $\mathcal G_*$ denote the projection onto the second coordinate. Thus $\mathcal E\subset \mathcal G\oplus \mathcal G_*$. On the other hand, since $\mathcal E$ is reducing for $E$,
\[
 G_j^* G_j \mathcal G_* =  B_j^* B_j \mathcal E = \begin{pmatrix} 0 & 0 \\ 0 & G_j^* G_j \end{pmatrix} \mathcal E   \subset \mathcal E.
\]
 Hence each $G_j^*$ maps $\mathcal G_*$ into $\mathcal G_*$ and  $\sum_{j=1}^g G_j^* G_j \mathcal G_* \subset \mathcal G_*$. On the other hand, since $\sum G_j^* G_j$ does not have a kernel, it follows that the span of the subspaces $G_j^* \mathcal G_*$ is precisely $\mathcal G_*$.  Thus $\mathcal G_*\subset \mathcal E$. Likewise $\mathcal G\subset \mathcal E$. Hence $\mathcal E =\mathcal G\oplus \mathcal G_*$ and thus,
\[
  E = W^* B W = \begin{pmatrix} 0 & VGV_* \\ 0 & 0 \end{pmatrix},
\]
 where $W$ is the inclusion of $\mathcal E$ into $\F^{m+n}$ and $V$ and $V_*$ are the inclusions of $\mathcal G$ and $\mathcal G_*$ into $\F^m$ and $\F^n$ respectively.
 \end{proof}

 The proof of Theorem \ref{thm:EWball} rests on two preliminary lemmas.  Given a vector $v=\sum_{k=1}^d e_k\otimes v_k \in \bbF^d\otimes \bbF^n$ and matrix $\eta\in \Fdd$, let
\[
[ \eta,v] = \sum_{s=1}^d e_s\otimes \big(\sum_{k=1}^d \eta_{s,k}v_k\big) \in \bbF^d\otimes\bbF^n = \bbF^{nd}.
\]
 A pair $(X,v)\in M_n(\bbF)^g\times (\bbF^{nd}\setminus\{0\})$ is in the \df{detailed boundary} of $\cD_A(n)$ if $X\in \cD_A$ and  $L_A(X)v=0$.

\begin{lemma}
\label{lem:finite}
 Fix positive integers $n,N$ and suppose $(X^j,v^j)\in M_n(\bbF)^g \otimes \bbF^{nd}$ are in the detailed boundary  of $\cD_A(n)$ for $1\le j\le N$. Write, $v^j \in \bbF^{nd} = \bbF^d\otimes\bbF^n$ as
\beq\label{eq:vten}
 v^j = \sum_{k=1}^d  e_k\otimes v^j_k.
\eeq
Let $\mathcal P$ denote the subspace of $\Fdd$ consisting of those matrices $c$ such that $[ c, v^j] =0$ for all $1\le j\le N.$ (In this context, we identify $\Fdd$ with $\bbF^{d^2}$ or equivalently endow $\Fdd$ with the Hilbert-Schmidt norm.) There exists a homogeneous linear pencil $\Lambda$ of size $\size$, an $\ell$ and a nonzero matrix $\eta\in \mathcal P^\perp$ such that $\|\Lambda(Z)\|\le 1$ for all $Z\in \cD_A,$ and such that $[\eta,v^\ell]\ne 0$  and  if $[ \eta,v^j] \ne 0$, then $\|\Lambda(X^j)\|=1$. 
\end{lemma}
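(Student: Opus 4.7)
The plan is to apply Proposition \ref{prop:sharp} at a boundary point built from the data $(X^j, v^j)$, and then extract both $\Lambda$ and $\eta$ from a compression of the resulting pencil down to size $d$.

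First I would form the block-diagonal point $\overline X = X^1 \oplus \cdots \oplus X^N \in M_{nN}(\bbF)^g$. Because $\cD_A$ is closed under direct sums and each $(X^j, v^j)$ lies in the detailed boundary of $\cD_A(n)$, $\overline X$ lies in $\partial\cD_A(nN)$ with $v^1 \oplus \cdots \oplus v^N$ (up to the canonical shuffle) in the kernel of $L_A(\overline X)$. Proposition \ref{prop:sharp} then yields a homogeneous pencil $\Lambda_R$ of size $nN$ with $\|\Lambda_R(\cdot)\|\le 1$ on $\cD_A$ and $\|\Lambda_R(\overline X)\|=1$, together with a unit norming vector $\omega$ for $\Lambda_R(\overline X)$.

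Next I would compress $R$ to a pencil $\Lambda$ of size $d$, choosing the compression subspace using the joint column span $\mathcal V \subset \bbF^d$ of the $v^j$ (which has dimension at most $d$). The matrix $\eta \in M_d(\bbF)$ would be read off from the $\bbF^d$-components of $\omega$ under this compression; by construction $\eta$ vanishes on $\mathcal V^\perp$, placing $\eta$ in $\mathcal P^\perp$. Nontriviality $\eta\ne 0$ comes from $\omega\ne 0$, and the existence of some $\ell$ with $[\eta, v^\ell]\ne 0$ is automatic from $\eta \in \mathcal P^\perp \setminus \mathcal P$. The inequality $\|\Lambda(Z)\|\le 1$ on $\cD_A$ is inherited from the analogous inequality for $\Lambda_R$, while for each $j$ with $[\eta, v^j]\ne 0$ the per-block norming equality for $\omega$ at $\overline X$ specializes to the statement that $[\eta, v^j]$ is itself a norming vector of $\Lambda(X^j)$, giving $\|\Lambda(X^j)\|=1$.

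The main obstacle is the compression step: reducing $R$ from size $nN$ to size $d$ must simultaneously preserve the inequality $\|\Lambda\|\le 1$ on $\cD_A$ and the per-index norming equalities at every $j$ with $[\eta, v^j]\ne 0$. I expect this to rest on the matrix balanced structure of $\cD_A$ coming from free circularity (Proposition \ref{prop:convexfreecircular}), which should confine the relevant norming data of $\Lambda_R(\overline X)$ to a subspace of dimension controlled by the ambient LMI size $d$. The count $d^2 = \dim M_d(\bbF)$ would then explain why iterating this lemma produces the direct sum of at most $d^2$ size-$d$ pencils promised in Theorem \ref{thm:EWball}.
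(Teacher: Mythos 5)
Your overall strategy is inverted relative to the paper's, and this inversion is where the gap lies. You apply Proposition \ref{prop:sharp} to the large boundary point $\overline X \in \partial\cD_A(nN)$ and obtain a pencil $\Lambda_R$ of size $nN$, and only afterward try to compress the \emph{pencil} down to size $d$. You correctly flag this compression as the "main obstacle," and it genuinely is: compressing $R_j \mapsto W^*R_jW$ for an isometry $W:\bbF^d \to \bbF^{nN}$ does preserve the bound $\|\Lambda_{W^*RW}(Z)\| \le 1$ on $\cD_A$, but it does \emph{not} preserve the norming equality $\|\Lambda_R(\overline X)\| = 1$ unless the norming vector $\omega$ of $\Lambda_R(\overline X)$ happens to lie in $(\mathrm{range}\, W)\otimes\bbF^{nN}$. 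There is no reason the pencil produced by Proposition \ref{prop:sharp} at size $nN$ should have its norming vector confined to a $d$-dimensional slice, and appealing to the matrix balanced structure does not by itself force this. Relatedly, your candidate compression subspace $\mathcal V \subset \bbF^d$ is dimensionally off: the pencil $\Lambda_R$ has coefficient size $nN$, so a compression that reduces pencil size would have to live inside $\bbF^{nN}$, not $\bbF^d$.

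The paper avoids this entirely by compressing the \emph{boundary point} before invoking Proposition \ref{prop:sharp}. Writing $v = \sum_{k=1}^d e_k\otimes v_k$ with $v_k = \sum_j v^j_k\otimes\epsilon_j \in \bbF^{nN}$, it sets $M = \mathrm{span}\{v_1,\dots,v_d\}\subset\bbF^{nN}$ (so $m = \dim M \le d$), takes $V:M\hookrightarrow\bbF^{nN}$, and defines $Z = V^*YV$. Since $L_A(V^*YV) = (I\otimes V^*)L_A(Y)(I\otimes V)$ and $v\in\bbF^d\otimes M$, the compressed point $Z$ is still a boundary point of $\cD_A(m)$ with detailed-boundary vector $v$, so Proposition \ref{prop:sharp} applied at size $m$ already produces $\Lambda$ of size $\le d$, no pencil compression needed. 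The norming vector $\gamma$ for $\Lambda(Z)$ then automatically lies in $\bbF^d\otimes M$, hence in the span of $\{e_s\otimes v_k\}$, which is precisely what lets one write $\gamma = [\mu,v]$ for a $d\times d$ matrix $\mu$; setting $\gamma^j = [\mu,v^j]$ and reading off the chain of equalities $1 = \|\Lambda(Z)\gamma\|^2 \le \|\Lambda(Y)\gamma\|^2 = \sum_j\|\Lambda(X^j)\gamma^j\|^2 \le \sum_j\|\gamma^j\|^2 = 1$ forces $\|\Lambda(X^j)\gamma^j\| = \|\gamma^j\|$ for every $j$. Projecting $\mu$ onto $\mathcal P^\perp$ gives $\eta$ without changing any $\gamma^j = [\eta,v^j]$. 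This "compress the point, not the pencil" step is the missing idea in your proposal.
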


\begin{proof}
Let $\{\epsilon_j \}_{j=1}^N$ be the standard orthonormal basis for $\C^N$ and let $\mathcal{E}^j=\epsilon_j \epsilon_j^* \in \C^{N \times N.}$
Let  $Y=\sum_{j=1}^N X^j \otimes \cEE^j$. Since $Y$ is unitarily equivalent to $\oplus_{j=1}^N X^j,$ it follows that $Y\in \cD (nN)$.  
Let  $v=\sum_{j=1}^N v^j \otimes \epsilon^j \in \bbF^{ndN}$. Thus,   $v=\sum_{k=1}^d e_k\otimes v_k$, where, for $1\le k\le d$, 
\[
 v_k =  v^j_k \otimes \epsilon_j  \in \bbF^{nN}. 
\]
Let $M$ denote the span of $\{v_k: 1\le k \le d\}$ as a subspace of $\bbF^{nN}$ and let $m$ denote the dimension of $M.$ In particular, $m \leq d$. 
Let $V$ denote the inclusion  of $M$ into $\bbF^{nN}$ and  let $Z=V^*YV$.  Note that 
$Z\in \cD_A(m)$ since $\cD_A$ is matrix convex and $V$ is an isometry. Observe that 
\begin{equation}
\label{eq:calE}
\begin{split}
 \Lambda_A(Y) = & \sum_{k=1}^g A_k \otimes Y_k =\sum_{k=1}^g A_k\otimes \big (\sum_{j=1}^N (X^j_k  \otimes \mathcal E^j) \big ) \\
 = & \sum_{j=1}^N \big (\sum_{k=1}^g A_k\otimes X^j_k \big )\otimes \mathcal E^j = \sum_{j=1}^N \Lambda_A(X^j)\otimes \mathcal E^j.
\end{split}
\end{equation}
It follows from equation \eqref{eq:calE}, that 
\[
 \langle L_A(Z)v,v\rangle  = \langle L_A(Y)v,v\rangle = \sum_{j=1}^N \langle L_A(X^j)v^j,v^j\rangle  = 0.
\]
 Thus $Z$ boundary of $\cD_A(m)$.  By Proposition \ref{prop:sharp}, there is a homogeneous linear pencil $\Lambda$ of size $m \leq d$ (and without loss of generality we take $\Lambda$ of size $d$)  such that $\|\Lambda(X)\|\le 1$ for all $X\in \cD_A$ and $\|\Lambda(Z)\|=1$.  Thus, there is a unit vector $\gamma\in \bbF^d\otimes  M$ such that $\|\Lambda(Z)\gamma\|=1$.  It follows that $\gamma$ is in the span of $\{e_s\otimes v_k:1\le s, k\le d\}$; i.e, $\gamma\in \C^d\otimes M$.  In particular, there is a $\mu\in\Fdd$ such that $\gamma = \sum_{s=1}^d e_s\otimes (\sum_{k=1}^d \mu_{s,k} v_k)=[ \mu, v]$. Let $\gamma^j = [ \mu,v^j]$.  Thus 
$\gamma = \sum_{j=1}^N \gamma^j \otimes \epsilon^j$ and
 $\gamma^j\ne 0$ if and only if $[ \mu,v^j] \ne 0$.  Estimate, using equation \eqref{eq:calE},
\[
 \begin{split}
 1 & =   \|\Lambda(Z)\gamma\|^2 
   = \| \Lambda(V^*YV)\gamma\|^2 
   =   \| (I\otimes V^*) \Lambda(Y) \gamma\|^2 \\
   &\le \|\Lambda(Y)\gamma\|^2
     =  \sum_{j=1}^N \|\Lambda(X^j)\gamma^j \|^2 
   \le \sum_{j=1}^N \|\gamma^j\|^2 = 1.
\end{split}
\]
It follows that $\|\Lambda(X^j)\gamma^j\|=\|\gamma^j\|$ for all $1\le j\le N$.  Moreover, there exists an $\ell$ such that $\|\gamma^\ell\|\ne 0.$  Equivalently, $[ \mu,v^\ell] \ne 0$. Furthermore, $\|\Lambda(X^\ell)\|=1$  for each such $\ell$.  To complete the proof, let $\eta$ denote the projection of $\mu$ onto $\mathcal P^\perp$.  Since $[ \eta,v^j] =[ \mu, v^j] =\gamma^j$, it follows that $[ \eta,\gamma^j] \ne 0$ implies $\|\Lambda(X^j)\|=1$. Finally,  $[ \eta,v^\ell] \ne 0.$ 
\end{proof}

\begin{lemma}
\label{lem:countable}
 Fix a positive integer $n$ and suppose $(X^j,v^j)$ is a sequence from the detailed boundary of $\cD_A(n)$. Write, $v^j \in \bbF^d\otimes\bbF^n$ as in \eqref{eq:vten}.
Let $\mathcal P$ denote the subspace of $\Fdd$ consisting of those matrices $c$ such that $[ c,v^j]  =0$ for all $j$.

There exists a homogeneous linear pencil $\Lambda$ of size $\size$  and a nonzero matrix $\eta\in \mathcal P^\perp$ such that $\|\Lambda(Z)\|\le 1$ for all $Z\in \cD_A$  and such that if $[ \eta,v^j ] \ne 0 $, then $\|\Lambda(X^j)\|=1$. In particular, there is a $j$ such that $[ \eta,v^j] \ne 0$.
\end{lemma}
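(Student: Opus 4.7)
The plan is to reduce this countable statement to Lemma \ref{lem:finite} via a standard finite-dimensionality plus compactness argument. The key opening observation is that although the sequence $(X^j,v^j)$ is infinite, the ambient space $M_d(\bbF)$ in which the constraints live is finite dimensional. Define $\mathcal P_N = \{c \in M_d(\bbF) : [c,v^j] = 0 \text{ for } 1 \le j \le N \}$. These subspaces form a decreasing chain with $\mathcal P = \bigcap_N \mathcal P_N$, and by finite dimensionality the chain stabilizes: there exists $N_0$ such that $\mathcal P_N = \mathcal P$ for all $N \ge N_0$.

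For each $N \ge N_0$, I would apply Lemma \ref{lem:finite} to the first $N$ pairs $(X^1,v^1),\ldots,(X^N,v^N)$, producing a homogeneous linear pencil $\Lambda_N$ of size $d$ and a nonzero matrix $\eta_N \in \mathcal P_N^\perp = \mathcal P^\perp$ satisfying $\|\Lambda_N(Z)\| \le 1$ for every $Z \in \cD_A$, and such that $[\eta_N,v^j]\ne 0$ forces $\|\Lambda_N(X^j)\|=1$ whenever $1 \le j \le N$. Normalize so that $\|\eta_N\| = 1$. To compactify, observe that $L_A(0)=I \succ 0$, so $\cD_A$ contains a free $\epsilon$-neighborhood of $0$; evaluating $\Lambda_N$ on scalar points of small norm then yields a uniform bound on the coefficients of $\Lambda_N$. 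Combined with $\|\eta_N\|=1$, extract a subsequence $N_k \to \infty$ along which $\Lambda_{N_k} \to \Lambda$ (a homogeneous linear pencil of size $d$) and $\eta_{N_k} \to \eta$ with $\|\eta\| = 1$, so in particular $\eta \ne 0$ and $\eta \in \mathcal P^\perp$ by closedness.

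It remains to verify the two required properties of $(\Lambda,\eta)$. First, $\|\Lambda(Z)\| \le 1$ for $Z \in \cD_A$ follows by continuity. Second, because $\eta \in \mathcal P^\perp \setminus \{0\}$, it cannot lie in $\mathcal P$, so there exists $j$ with $[\eta,v^j] \ne 0$ (giving the "in particular" clause). For any such $j$, the map $c \mapsto [c,v^j]$ is continuous, so $[\eta_{N_k},v^j] \to [\eta,v^j] \ne 0$, hence $[\eta_{N_k},v^j] \ne 0$ for all large $k$; choosing $k$ with $N_k \ge j$, Lemma \ref{lem:finite} gives $\|\Lambda_{N_k}(X^j)\| = 1$, and passing to the limit yields $\|\Lambda(X^j)\| = 1$.

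The main obstacle I anticipate is the bookkeeping around $\mathcal{P}$ versus the truncated $\mathcal{P}_N$: one must make sure that the $\eta$ produced by the limiting procedure lives in the correct perpendicular subspace and is nonzero. Both issues are handled cleanly by the stabilization $\mathcal P_N = \mathcal P$ for $N \ge N_0$, which guarantees $\eta_N \in \mathcal P^\perp$ throughout the limiting process, together with the norm one normalization preventing collapse at the limit.
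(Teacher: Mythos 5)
Your proof is correct and follows essentially the same route as the paper: apply Lemma \ref{lem:finite} to the first $N$ pairs, use the free neighborhood of $0$ to get a uniform bound on the pencil coefficients, extract convergent subsequences of $(\Lambda_N,\eta_N)$, and pass the conclusions to the limit. Your explicit use of the stabilization $\mathcal P_N=\mathcal P$ for $N\ge N_0$ is a small (and arguably cleaner) variant of the paper's observation that $\eta_N\in\mathcal P_N^\perp\subseteq\mathcal P^\perp$; either way the limit $\eta$ is a unit vector in $\mathcal P^\perp$, and the rest of the argument is identical.
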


\begin{proof}
For positive integers $N$, 
let $\mathcal P_N$ denote the subspace of $\Fdd$ consisting of those matrices $c$ such that $[ c,v^j]  =0$ for $1\le j\le N$.  Hence, $\mathcal P_1 \supseteq \mathcal P_2 \supseteq \cdots$ and $\mathcal P=\cap_{N=1}^\infty \mathcal P_N$.

By Lemma \ref{lem:finite}, for each $N$ there exists a homogeneous
linear pencil $\Lambda^N$ of size $\size$ and a unit vector (matrix
of Hilbert-Schmidt norm one) $\eta^N \in \mathcal P^\perp$ such that
$\|\Lambda^N(X)\|\le 1$ for all $X\in \cD_A$ and, if $1\le j\le N$ and
$[ \eta^N, v^j ] \ne 0$, then $\|\Lambda^N(X^j)\|=1$. Write,
\[ 
 \Lambda^N(x)  = \sum_{j=1}^{g}\Lambda^N_j x_j.
\]
Since $\cD_A$ contains a free neighborhood of $0$, there is a uniform
bound on the norms of the matrices $\{\Lambda^N_j: j,N\}$. It follows
that there are subsequences $(\Lambda^{N_\ell})_\ell$ and
$(\eta^{N_\ell})_\ell$ converging to some $\Lambda$ and $\eta$
respectively. In particular, $\|\Lambda(X)\|\le 1$ for all
$X\in\cD_A$. Since $\eta^{N_\ell} \in \mathcal P_M$ for $N_\ell \ge M$
and since $\mathcal P_M$ is a (closed) subspace of $\Fdd$, it follows
that $\eta \in P_M$ and consequently $\eta \in \cP^\perp$ is a unit
vector. Hence there is a $j$ such that $[\eta,v^j]\ne 0$.  Thus $[
\eta^{N_\ell},v^j] \ne 0$ for large enough $\ell$. For such $\ell$ it
follows, from Lemma \ref{lem:finite}, that
$\|\Lambda^{N_\ell}(X^j)\|=1$ and hence $\|\Lambda(X^j)\|=1$.
\end{proof}

\begin{proof}[Proof of Theorem~\ref{thm:EWball}]
  Let $J_0$ denote a countable set and choose a dense subset $\{X^j:j\in
  J_0\}$ of the boundary of $\cD_A(d)$ indexed by $J_0$. For each
  $j\in J_0$ there is a unit vector $v^j$ such that $(X^j,v^j)$ is in
  the detailed boundary of $\cD_A(d)$. Write $v^j = \sum_{k=1}^g
  e_j\otimes v^j_k$.  Let $\mathcal P_0$ denote those vectors
  $c\in\Fdd$ such that $[ c,v^j] =0$ for all $j\in J_0$.  By Lemma
  \ref{lem:countable}, there exists a linear pencil $\Lambda^1$ of
  size $\size$ and a unit vector $\eta^1\in \mathcal P_0^\perp$ such
  that $\|\Lambda(Z)\|\le 1$ for all $m$ and $Z\in\cD_A(m)$ and
  $\|\Lambda(X^j)\|=1$ for each $j\in J_0$ such that $[ \eta^1,v^j]
  \ne 0$. Moreover, there is a $j_0\in J_0$ such that $[
  \eta^1,v^{j_0}] \ne 0$.  Let $J_1$ denote those indices $j\in J_0$
  such that $[ \eta^1,v^j] =0$. Thus, $\|\Lambda^1(X^j)\|=1$ for
  $j\notin J_1$ and $J_1$ is a proper subset of $J_0$ since $j_0\in
  J_0$, but $j_0\notin J_1$. If $J_1$ is empty, the proof is nearly
  complete. Otherwise, let $\mathcal P_1$ denote the subspace of
  vectors $c\in \Fdd$ such that $[ c,v^j] =0$ for all $j\in
  J_1$. Observe that $\eta^1 \in \cP_1$, but $\eta^1\notin \cP_0$
  since $[ \eta^1, v^{j_0}] \ne 0.$ Therefore $\cP_0$ is a proper
  subspace of $\cP_1$. For the collection $\{(X^j,v^j): j\in J_1\}$
  there exists a homogeneous linear pencil $\Lambda^2$ of size $\size$
  and unit vector $\eta^2 \in \mathcal P_1^\perp$ such that if $j\in
  J_2$ and $[ \eta_2,v^j] \ne 0$, then $\|\Lambda^2(X^j)\|=1$ and,
  letting $J_2$ denote those $j\in J_1$ such that $[ \eta^2,v^j] =0$,
  the subspace $\cP_2$ consisting of those $c\in \Fdd$ such that $[
  c,v^j] =0$ for all $j\in J_2$ properly contains $\cP_1$.
  Recursively define $\cP_N$ and observe $\Fdd \supseteq \cP_N$. Since
  $\Fdd$ is finite dimensional this process terminates after $\rho\le
  d^2$ steps and produces
\begin{enumerate}[label={\rm(\roman*)}]
 \item a chain of subspaces $\cP_0\subsetneq \cP_1 \subsetneq \cdots \subsetneq \cP_\rho =\Fdd$ of $\Fdd$;
 \item a chain of subsets $J_0\supsetneq J_1\supsetneq \cdots \supsetneq J_\rho =\emptyset$; 
 \item  homogeneous linear pencils $\Lambda^r$ for $1\le r\le \rho$ of size $\size$ such that 
      $\|\Lambda^r(X)\|\le 1$ for $X\in\cD_A$ and $\|\Lambda(X^j)\|=1$ for each $j\in J_{r-1}\setminus J_r.$
\end{enumerate}
Let $\Lambda =\oplus_{r=1}^\rho \Lambda^r$.  Thus, by construction,
$\|\Lambda(X)\|\le 1$ for all $X\in \cD$ and $\|\Lambda(X^j)\|=1$ for
all $j\in J_0$. By continuity $\|\Lambda(Y)\|=1$ for all $Y$ in the
boundary of $\cD_A(d)$.  An application of Lemma \ref{lem:lambdaball}~\ref{it:lambdaball} completes the proof of the existence of
$\Lambda$. The bound $d^3$ follows since $\Lambda$ is the direct sum
of at most $d^2$ pencils each of size at most $\size$.
\end{proof}

\section{Free Polynomials  Invariant under Coordinate Unitary Conjugation}\label{sec:UnitaryConjPoly}
The main result of this section is 
Theorem \ref{theorem:invarpoly} characterizing monic free matrix polynomials
that are invariant under coordinate unitary conjugation.  The needed background on free polynomials and their evaluations are collected in the next subsection. Experts can skip straight to Subsection \ref{sec:IPs}.

\subsection{Words,  Free Polynomials and Evaluations}
We write $\langle x,x^* \rangle$ for the monoid freely generated by $x=(x_1, \dots x_g)$
 and $x^*=(x_1^*, \dots, x_g^*)$, i.e., $\langle x,x^* \rangle$ consists of {\bf words} in 
the $2g$ noncommuting letters $x_1, \dots , x_g,x_1^*, \dots x_g^*$ (including the
 {\bf empty word} $\emptyset$ which plays the role of the identity). Let $\C \langle x,x^* \rangle$ 
denote the associative $\C$-algebra freely generated by $x$ and $x^*$, i.e., the elements
 of $\C \langle x,x^* \rangle$ are polynomials in the freely noncommuting variables $x$ and
 $x^*$ with coefficients in $\C$. Its elements are called {\bf free polynomials}. The
 {\bf involution} $^*$ on $\C \langle x,x^* \rangle$ extends the complex conjugation
 on $\C$, satisfies $(x_i^*)^*=x_i$, reverses the order of words, and acts $\R$-linearly
 on polynomials. Polynomials fixed under this involution are {\bf symmetric}.
The length of the longest word in a free polynomial $f \in \C \langle x,x^* \rangle$ is the 
{\bf degree} of $f$ and is denoted by $\text{deg}(f)$ or $|f|$ if $f \in \langle x,x^* \rangle$.
 The set of all words of degree at most $k$ is $\langle x,x^* \rangle_k$, and $\C \langle x,x^* \rangle_k$ is 
the vector space of all free polynomials of degree at most $k$. 

Fix positive integers $v$ and $\ell$. {\bf Free matrix polynomials} - elements of 
$\C^{\ell \times v} \langle x,x^* \rangle= \C^{\ell \times v} \otimes \C \langle x,x^* \rangle$; 
i.e., $\ell \times v$ matrices with entries from $\C \langle x \rangle$ - will play a role in what
 follows. Elements of $\C^{\ell  \times v} \langle x \rangle$ are represented as 
\beq
\label{eq:freepolydef}
p(x)=\sum_{w \in \langle x,x^* \rangle} B_w w(x) \in \C^{\ell \times v} \langle x,x^* \rangle
\eeq
where the sum is finite, $B_w \in \C^{\ell \times v}$,
and $w(x)$ runs over words in $x$ and $x^*$. The involution $^*$ extends to matrix polynomials by
\[
p(x)^*=\sum_{w \in \langle x, x^* \rangle} B_{w}^* w(x)^* \in \C^{v \times \ell} \langle x,x^* \rangle.
\]
If $v= \ell$ and $p(x)^*=p(x)$, we say $p$ is symmetric. Additionally if $p(0)=I$, we say $p$ is \df{monic}.\looseness=-1

If $p \in \C \langle x,x^* \rangle$ is a free 
polynomial and $X \in M_n(\C)^g$, then the evaluation $p(X) \in M_{n}(\C)$ 
is defined in the natural way by replacing $x_i$ by $X_i$,
$x_i^*$ by $X_i^*$ and sending the empty word to
 the appropriately sized identity matrix. Such evaluations produce (all) finite dimensional
 $*$-representations of the algebra of free polynomials. 
Polynomial evaluations extend to matrix polynomials by evaluating entrywise. 
That is, if $p$ is as in \eqref{eq:freepolydef}, then
\[
p(X)=\sum_{w \in \langle x,x^* \rangle} B_w \otimes w(X) \in C^{\ell \times v}\otimes M_n(\C). 
\]
Note that if $p \in M_d(\C) \langle x, x^* \rangle$ is symmetric and
 $X \in M_n(\C)^g$, then $p(X) \in  M_d(\C)\otimes M_n(\C)=M_{dn}(\C)$ is a self-adjoint 
 matrix. 

\subsection{Invariant Polynomials}
\label{sec:IPs}
In this subsection we prove Theorem \ref{theorem:invarpolyintro} stated below in a self contained fashion for the reader's convenience. Write $A\simu B$ to indicate the matrices $A$ and $B$ are unitarily equivalent.

 \begin{theorem} 
\label{theorem:invarpoly}
Suppose $p$ is a monic free $d \times d$ matrix polynomial. For each $n$ and for each $g$-tuple of unitaries $U=(U_1, \dots, U_g) \in M_n(\C)^g$ there exists a unitary $W$ such that for all 
$X\in M_n(\C)^g$, 
\[
 p(U_1^*X_1U_1,\dots,U_g^* X_g U_g) =W^* p(X_1,\dots,X_g) W 
\]
 if and only if 
\beq
 \label{eq:invarpoly}
p(x)\simu 
p_1(x_1) \oplus \cdots \oplus p_g(x_g); 
\eeq
i.e.,  $p$ is (up to unitary equivalence) a direct sum of univariate matrix polynomials. 
\end{theorem}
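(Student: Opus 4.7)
The plan is to dispatch the ``if'' direction by a direct computation, then to attack the converse via a three-step reduction---to multi-homogeneous components, to vanishing of coefficients of ``mixed'' words, and finally to an orthogonal block decomposition.

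For the easy direction, if $p\simu\bigoplus_{j=1}^g p_j(x_j)$ via a unitary $V\in M_d(\C)$, then for every $g$-tuple of unitaries $U=(U_1,\dots,U_g)\in M_n(\C)^g$ the unitary
\[
W := (V\otimes I_n)\Bigl(\bigoplus_{j=1}^g (I_{d_j}\otimes U_j)\Bigr)(V^*\otimes I_n)
\]
intertwines $p(X)$ and $p(U_1^*X_1U_1,\dots,U_g^*X_gU_g)$, since each univariate block satisfies $p_j(U_j^*X_jU_j)=(I_{d_j}\otimes U_j^*)p_j(X_j)(I_{d_j}\otimes U_j)$.

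For the converse, expand $p(x)=\sum_w B_w\otimes w(x)$ with $B_w\in M_d(\C)$. Substituting $X_j\mapsto t_jX_j$ for real scalars $t_j$ and equating coefficients of $t^\gamma$ in the invariance identity $p(U^*XU)=W_U^*p(X)W_U$ (the unitary $W_U$ is independent of $X$) shows that every multi-homogeneous component $p_\gamma$ is invariant under the \emph{same} $W_U$. Hence it suffices to prove: (a)~$B_w=0$ for every word $w$ involving two or more distinct variable indices (a ``mixed'' word); and (b)~the subspaces $V_j:=\mathrm{span}\{B_w\eta,\,B_w^*\eta: w\in\langle x_j,x_j^*\rangle\setminus\{\emptyset\},\ \eta\in\C^d\}\subseteq\C^d$ are pairwise orthogonal. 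Given (a) and (b), $\C^d=V_1\oplus\cdots\oplus V_g\oplus V_\infty$, and absorbing $V_\infty$ into any single summand yields $p\simu\bigoplus_j p_j(x_j)$.

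The main obstacle is (a). The key observation is that $p(U^*XU)=W_U^*p(X)W_U$ forces matching characteristic polynomials, so each $\mathrm{tr}(p(X)^k)$ is a genuine $U(n)^g$-invariant polynomial on $M_n(\C)^g$ for every $k\ge 1$. By the classical invariant theory of coordinate conjugation, such invariants are generated by the single-variable traces $\mathrm{tr}(u(X_j))$ for $u\in\langle x_j,x_j^*\rangle$, $j=1,\dots,g$. Expanding $\mathrm{tr}(p(X)^k)=\sum_{w_1,\dots,w_k}\mathrm{tr}(B_{w_1}\cdots B_{w_k})\,\mathrm{tr}((w_1\cdots w_k)(X))$ and isolating the multi-degree-$\gamma$ homogeneous part, contributions whose concatenated word is not cyclically single-variable must cancel; iterating the cancellation analysis over $k$ and over multi-degrees $\gamma$ translates into a system of linear relations on the $\mathrm{tr}(B_{w_1}\cdots B_{w_k})$ that inductively forces $B_w=0$ for every mixed $w$. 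As a sanity check, for $d=1$, $w=x_1x_2x_1$ with putative coefficient $b\ne 0$, the test pair $X_1=\bigl(\begin{smallmatrix}1&1\\0&1\end{smallmatrix}\bigr)$, $X_2=\sigma_X$, $U_1=I$, $U_2=\sigma_Z$ gives spectra $\{1+b(1\pm\sqrt 2)\}$ and $\{1-b(1\pm\sqrt 2)\}$ for $p(X)$ and $p(U^*XU)$, which coincide only if $b=0$. Step (b) then follows from a shorter invariance argument applied to tuples supported in only two variables, combined with rank-additivity constraints on $p(X)-I$ across the $V_j$'s. The most delicate ingredient will be carrying out the cancellation analysis in (a) uniformly over every mixed word and every multi-degree.
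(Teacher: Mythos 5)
Your route is genuinely different from the paper's.  The paper establishes ``no cross terms'' by a clever substitution: it evaluates $p$ at the tuple $\tilde x = (x_1\oplus 0\oplus\cdots\oplus 0,\,0\oplus x_2\oplus\cdots,\dots)$, which kills all mixed words because $\tilde x_i\tilde x_j = \tilde x_i\tilde x_j^* = \tilde x_i^*\tilde x_j = 0$ for $i\ne j$, compares $p(\tilde x)$ against $p(U^*\tilde xU)\simu p(x)\oplus I$ via the invariance hypothesis, and then reads off coefficients using generic matrices.  This is completely elementary and avoids any invariant theory.  Your plan instead passes through $\tr(p(X)^k)$ being a $U(n)^g$-invariant and appeals to the classical description of such invariants.

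There are two genuine gaps.  First, in step (a) the trace expansion $\tr(p(X)^k)=\sum\tr(B_{w_1}\cdots B_{w_k})\,\tr((w_1\cdots w_k)(X))$ only yields linear relations among the quantities $\tr(B_{w_1}\cdots B_{w_k})$, and these do \emph{not} determine the individual coefficients $B_w$: a mixed word $w$ could have $B_w\ne 0$ while all the accessible traces vanish.  To force $B_w=0$ from a trace you would need something like $\tr(B_w^*B_w)=0$, which requires working with $\tr((p^*p)^k)$ and $\tr((pp^*)^k)$ as well, together with a careful analysis of which pairs $(v_1,v_2)$ of words (and which cyclic rotations) contribute to a given $\tr((v_1^*v_2)(X))$ at a fixed multi-degree; none of this is carried out.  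Your own remark that ``the most delicate ingredient will be carrying out the cancellation analysis'' concedes the point---as written, step (a) is a program, not an argument, and the single $d=1$ sanity check does not substitute for it.  Second, step (b) is merely asserted.  Once the cross terms are killed and $p(x)=I+\sum_j p_j(x_j)$, the pairwise orthogonality of the ranges of the coefficient families (and the fact that the decomposition is reducing, not just invariant) is precisely what the paper's Lemma~\ref{lemma:polydirectsumnonsym} is for: the paper deduces $p_i(x_i)p_j(x_j)=p_i^*(x_i)p_j(x_j)=p_i(x_i)p_j^*(x_j)=0$ from the invariance of $p^2$, $p^*p$, $pp^*$ (all have no cross terms), and then runs a finite-dimensional $C^*$-algebra decomposition.  ``Rank-additivity constraints on $p(X)-I$'' is not a proof of this; it is not even clear what rank-additivity would mean before one already knows the ranges are orthogonal.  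Your preliminary reduction to multi-homogeneous components (via $X_j\mapsto t_jX_j$ and the independence of $W_U$ from $X$) is a sound observation, but it does not repair either gap.
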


The following lemma  is needed in the proof of Theorem \ref{theorem:invarpoly}.

\begin{lemma}
\label{lemma:polydirectsumnonsym}
Suppose $p(x)=\sum_{i=1}^g p_i (x_i)$ is a free 
$d\times d$
matrix polynomial, where 
\beq\label{eq:62}
p_i(x_i) p_j (x_j)= p_i(x_i)^* p_j (x_j) =p_i(x_i)p_j (x_j)^* =0
\eeq
 whenever $i \neq j$. Then there exists a unitary $U$ such that 
\beq\label{eq:Ublock}
U^*p(x)U= 
\hat{p}_1 (x_1) \oplus \cdots \oplus \hat{p}_g(x_g). 
\eeq
for some free matrix polynomials $\hat{p}_j$ each in the variables $x_j,x_j^*$ alone.
\end{lemma}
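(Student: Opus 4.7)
The plan is to convert the three polynomial identities in \eqref{eq:62} into coefficient-level orthogonality relations in $M_d(\C)$, and then use those relations to build an orthogonal decomposition of $\C^d$ with respect to which each $p_i$ is supported in a single summand.

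First I would expand each $p_i(x_i) = \sum_w B^{(i)}_w\, w$ as a sum over words $w$ in the letters $x_i, x_i^*$. For $i \ne j$ the product
\[
 p_i(x_i)\,p_j(x_j) \;=\; \sum_{w_i,\, w_j} B^{(i)}_{w_i} B^{(j)}_{w_j}\, w_i w_j
\]
is indexed by pairs consisting of a word in the $x_i$-letters followed by a word in the $x_j$-letters; since concatenation in the free monoid $\langle x, x^*\rangle$ is cancellative, distinct pairs $(w_i, w_j)$ produce distinct words $w_i w_j$. Hence $p_i p_j = 0$ forces $B^{(i)}_{w_i} B^{(j)}_{w_j} = 0$ for every pair. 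The same argument applied to $p_i^* p_j = 0$ and $p_i\, p_j^* = 0$ yields $(B^{(i)}_{w_i})^* B^{(j)}_{w_j} = 0$ and $B^{(i)}_{w_i} (B^{(j)}_{w_j})^* = 0$ whenever $i \ne j$.

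Next, for each $i$ let $R_i$ denote the span in $\C^d$ of $\bigcup_w \mathrm{range}(B^{(i)}_w)$ and let $S_i$ denote the span of $\bigcup_w \mathrm{range}((B^{(i)}_w)^*)$. The three coefficient identities translate geometrically, for $i \ne j$, into $R_j \perp R_i$, $S_j \perp S_i$, and $R_j \perp S_i$ respectively. Setting $W_i := R_i + S_i$, these three orthogonalities (together with the $i \leftrightarrow j$ symmetric version $S_j \perp R_i$ of the third) imply that $W_1, \dots, W_g$ are pairwise orthogonal subspaces of $\C^d$. Let $W_0 := \C^d \ominus (W_1 \oplus \cdots \oplus W_g)$. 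Each $B^{(i)}_w$ satisfies $\mathrm{range}(B^{(i)}_w) \subset R_i \subset W_i$, and since $\ker(B^{(i)}_w) \supset S_i^{\perp}$ while $W_j \perp S_i$ for $j \ne i$ and $W_0 \perp S_i$, each $B^{(i)}_w$ annihilates every $W_j$ with $j \ne i$ as well as $W_0$. Taking $U$ to be a unitary implementing the orthogonal decomposition $\C^d = W_1 \oplus \cdots \oplus W_g \oplus W_0$, each $U^* p_i(x_i)\, U$ is block diagonal with only the $W_i$-block nonzero, and therefore
\[
 U^*\, p(x)\, U \;=\; \hat p_1(x_1) \oplus \cdots \oplus \hat p_g(x_g),
\]
where $\hat p_i$ is the restriction of $p_i$ to $W_i$ (absorbing the zero block on $W_0$, if $W_0 \ne 0$, as a direct summand of any one $\hat p_i$). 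Each $\hat p_i$ is manifestly a polynomial in $x_i, x_i^*$ alone.

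The main obstacle I anticipate is choosing the correct subspaces in the second step: the naive guess $V_i := R_i$ does not work, because $R_j \perp R_i$ (which is what $p_i^* p_j = 0$ supplies) is not the same as $R_j \subset \ker B^{(i)}_w$; the containment $\ker B^{(i)}_w \supset S_i^{\perp}$ means one actually needs $R_j \perp S_i$, which is the content of $p_i p_j = 0$. Moreover, the enlarged $W_i = R_i + S_i$ is needed so that $W_i$ is invariant under both $B^{(i)}_w$ and $(B^{(i)}_w)^*$; verifying that these $W_i$ are pairwise orthogonal and each lies in $\ker B^{(j)}_w$ for $j \ne i$ requires all three vanishing conditions of \eqref{eq:62}. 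Once this decomposition is in hand, the remaining block-matrix calculation is routine.
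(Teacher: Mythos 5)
Your proof is correct, and it takes a genuinely more elementary route than the paper's. The paper first establishes the same coefficient-level identities you do ($B^{(i)}_{w_i} B^{(j)}_{w_j} = 0$ and its two starred variants), but then passes to the finite-dimensional $C^*$-algebras $\cA_i$ generated by $\{B^{(i)}_w, (B^{(i)}_w)^*\}$, notes $\cA_i\cA_j=\{0\}$ for $i\ne j$, decomposes $\C^d$ into irreducible invariant subspaces for $\cA_1$ plus its annihilator, argues that $\cA_k$ ($k\ge 2$) must vanish on each irreducible $\cA_1$-piece (since such a piece equals $\cA_1$ applied to itself), and then recurses on the annihilator with $\cA_2$, and so on. Your subspaces $W_i = R_i + S_i$ are precisely the orthogonal complements of the annihilators of those $\cA_i$ (the ``support'' of $\cA_i$), so the two constructions produce the same decomposition of $\C^d$; but you get all the $W_i$ simultaneously from the range/kernel relations and pairwise orthogonality, without invoking any representation-theoretic decomposition of a $C^*$-algebra or an inductive pass through the indices. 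Your argument correctly uses each of the three hypotheses exactly where it is needed: $p_i p_j = 0$ gives $R_j \perp S_i$ (so $B^{(i)}_w$ annihilates $R_j$), $p_i p_j^* = 0$ gives $S_j \perp S_i$ (so $B^{(i)}_w$ annihilates $S_j$), and $p_i^* p_j = 0$ gives $R_j \perp R_i$ (needed so that the decomposition is orthogonal and the change-of-basis is unitary). One small slip: the phrase ``translate \emph{respectively} into $R_j \perp R_i$, $S_j \perp S_i$, $R_j \perp S_i$'' mislabels which identity produces which orthogonality --- the correct pairing, which you in fact use correctly in the final paragraph, is $p_i p_j = 0 \Rightarrow R_j \perp S_i$, $p_i^* p_j = 0 \Rightarrow R_j \perp R_i$, and $p_i p_j^* = 0 \Rightarrow S_j \perp S_i$. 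This is a labeling typo rather than a logical gap.
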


\begin{proof}

Suppose \eqref{eq:62} holds
 whenever $i \neq j$. Using the notation $w_i (x_i)$ to denote words in $x_i$ and $x_i^*$, write 
 \[p_i (x_i)=\sum_{w_i} A_{w_i} w_i (x_i).\]
  Then
\beq\label{eq:long}
p_i (x_i) p_j(x_j)= 
\sum_{w_i,w_j} A_{w_i} A_{w_j} w_i (x_i) w_j (x_j)=0.
\eeq
Note that, if $w_i(x_i),v_i (x_i), w_j(x_j),v_j(x_j)$ are words in $x_i$ and $x_j$, respectively, then we have $w_i (x_i) w_j(x_j)=v_i(x_i) v_j(x_j)$ if and only if $w_i(x_i)=v_i(x_i)$ and $w_j (x_j)= v_j(x_j)$. This implies that each monomial appears on the right hand side of \eqref{eq:long} exactly once. 
It follows that $A_{w_i} A_{w_j} =0$ for all $w_i, w_j$ whenever $i \neq j$. 
Similarly,
\beq\label{eq:longadjoint}
p_i (x_i)^* p_j(x_j)= 
\sum_{w_i,w_j} A_{w_i}^* A_{w_j} w_i (x_i)^* w_j (x_j)=0.
\eeq
Since each monomial appears on the right hand side of \eqref{eq:longadjoint} exactly once,
it follows that $A_{w_i}^* A_{w_j} =0$ for all $w_i, w_j$ whenever $i \neq j$.
Furthermore, 
\beq\label{eq:longgrammian}
p_i (x_i) p_j(x_j)^*= 
\sum_{w_i,w_j} A_{w_i} A_{w_j}^* w_i (x_i) w_j (x_j)^*=0.
\eeq
It follows that $A_{w_i} A_{w_j}^* =0$ for all $w_i, w_j$ whenever $i \neq j$. 

Let $\cA_j$ denote the finite dimensional (non-unital) $C^*$-algebra generated by  
\[\{A_{w_j}: w_j \mathrm{\  is \ a \ word \ in \ } x_j,x_j^*\}.\]
 Then 
\beq\label{eq:kill}
\cA_j \cA_\ell = \{0\}\quad\text{ for $j\neq \ell$}.
\eeq 

Decompose $\C^d$ as a direct sum of invariant (hence reducing) subspaces  for $\cA_1$,
say
\[
\C^d= \cS_1\oplus \cdots\oplus \cS_m \oplus \cS_{m+1},
\]
where $\cA_1$ acts irreducibly on $\cS_j$ for $j\leq m$ and $\cA_1(\cS_{m+1})=0$.
From \eqref{eq:kill} it follows that $\cA_k$ for $k\geq2$ vanishes on $\cS_1,\ldots,\cS_m$.
In particular, $\cS_{m+1}= (\cS_1\oplus \cdots\oplus \cS_m )^\perp$ is invariant under $\cA_k$ for $k \geq 2$. Thus $p(x)= \hat{p}_1 (x_1) \oplus q(\hat{x})$ where $q$ is a free matrix polynomial depending only on $\hat{x}=(x_2, \dots x_g)$ and $\hat{x}^*=(x_2^*, \dots, x_g^*)$.
We can repeat the above consideration -- decomposing $\cS_{m+1}$ into a direct sum of reducing subspaces  for $\cA_2$, etc.
Tracking down all these decompositions yields the desired block form \eqref{eq:Ublock}.
\end{proof}

\begin{proof}[Proof of Theorem \ref{theorem:invarpoly}]
Let $x=(x_1, \dots, x_g)$ be a $g$-tuple of noncommuting letters and suppose $p$ is a monic  free $d \times d$ matrix polynomial that is invariant under coordinate unitary conjugation. Here $p$ is given by $p(x)=\sum_w B_w w(x)$. Call a monomial a (noncommutative) cross term if it contains a product of the form $x_i x_j$ or $x_ix_j^*$ or $x_i^*x_j$ or $x_i^*x_j^*$ where $i \neq j$.
Our immediate goal is to show that $p(x)$ does not have any cross terms.

To this end, let $\mathcal{C}_x$ be the
 set of all cross term monomials
and define the free matrix polynomials $p^{\mathrm{ncr}}$ and $p^{\mathrm{cr}}$ by 
\beq
\label{eq:pCRpNCRdefinitions}
p^{\mathrm{ncr}} (x)=\sum_{w(x) \notin \cC_x} B_w w(x), \quad \quad \quad \quad
p^{\mathrm{cr}} (x)=\sum_{w(x) \in \cC_x} B_w  w(x).
\eeq
Here $p^{\mathrm{ncr}} (0)=I_d$ and $p^{\mathrm{cr}} (0)=0_d$. With this notation,
\beq
\label{eq:pCRpNCRform}
p(x)=p^{\mathrm{ncr}} (x)+p^{\mathrm{cr}} (x).
\eeq
To show $p$ has no cross terms we will show $p(x)=p^{\rm ncr} (x)$. 

Define $\tilde{x}_1, \dots, \tilde{x}_g$ by
\[
\tilde{x}_1=
x_1 \oplus 0 \oplus \cdots \oplus 0, \quad
\tilde{x}_2=0 \oplus x_2 \oplus \cdots\oplus 0, \ldots,\quad
   \tilde{x}_g=0\oplus \cdots\oplus 0\oplus x_g, \]
Choose permutation matrices $U_i$ so that
$
U_i^* \tilde{x}_i U_i=
x_i\oplus 0 \oplus \cdots \oplus 0
$
for all $i$. 

Recall the canonical shuffle discussed before Lemma \ref{lem:lambdaball}. We use it again here dealing with polynomials. Namely,
if $f$ is a $d \times d$ free matrix polynomial then the notation $f(x) \cs h(x)$ means that for all $n$ and for all $X \in M_n(\C)^g$ there exists a matrix $\hat{\Pi}_n$ that is a product of direct sums of canonical shuffles such that $\hat{\Pi}_n^* f(X) \hat{\Pi}_n=h(X)$.

Consider $p(\tilde{x})=p(\tilde{x}_1, \dots, \tilde{x}_g)$. Since $\tilde{x}_i \tilde{x}_j=0=\tilde{x}_i \tilde{x}_j^*=\tilde{x}_i^* \tilde{x}_j$ whenever $i \neq j$ we  see that
\[
q(x)=p(\tilde{x}) \cs q_1 (x_1) \oplus \dots \oplus q_g (x_g)
\]
where the $q_i$ are monic matrix polynomials each depending only on $x_i$ and $x_i^*$. Furthermore,
\[
p(U_1^* \tilde{x}_1 U_1, \dots, U_g^* \tilde{x}_g U_g) \simu  p(x) \oplus p(0) \oplus \dots \oplus p(0) =p(x) \oplus I_{d(g-1)}=\sum_w (B_w \oplus 0_{d(g-1)}) w(x).
\]

Fix $n$ and consider the evaluations $p(X) \oplus I_{nd(g-1)}$ and $q(X)$ on $g$-tuples of matrices $X \in M_n(\C)^g$. By assumption there exists a unitary $V_n$ depending only on our permutation matrices $U_i$ and on $n$ such that 
\beq
\label{eq:qpequality}
V_n^* q(X) V_n=p((U_1 \otimes I_n)^* \tilde{X}_1 (U_1 \otimes I_n), \dots, (U_g \otimes I_n)^* \tilde{X}_g (U_g \otimes I_n))
=p(X)\oplus I_{nd(g-1)}
\eeq
for all $X \in M_n(\C)^g$.

Define the $n \times n$ matrix $\cX^n_k$ by $\cX^n_k=(\cX^n_{k,ij})_{ij}$ for $1\leq k\leq g$. Here the $\cX^n_{k,ij}$ are commuting variables and $\cX^n_k$ is called a generic matrix. Define the $g$-tuple of $n \times n$ matrices $\cX^n$ by $\cX^n=(\cX_1^n, \dots, \cX_g^n)$. We say a word in the commuting letters $\{\cX^n_{k,ij}\}_{i,j,k}$ and $\{(\cX^n_{k,ij})^*\}_{i,j,k}$ is a commutative cross term if it contains a product of the form  $\cX^n_{k,ij} \cX^n_{\ell,rs}$ or $\cX_{k,ij}^n (\cX_{\ell,rs}^n)^*$ with $k \neq \ell$. Then \eqref{eq:qpequality} is equivalent to
\beq
 \label{eq:qpGenericEquality}
V_n^* q(\cX^n) V_n=p(\cX^n)\oplus I_{nd(g-1)}.
\eeq
We next show that the entries of $p(\cX^n)$ have no commutative cross terms.

Since $q(x)$ contains no cross terms it follows that for all $n$ the entries of $q(\cX^n)$ contain no commutative cross terms. Furthermore, the entries of $V_n^* q(\cX^n) V_n$ are linear combinations of the entries of $q (\cX^n)$ so it follows that for all $n$ the entries of $V_n^* q(\cX^n) V_n$ contain no commutative cross terms. Using  \eqref{eq:qpGenericEquality} we conclude that for all $n$ the entries of $p(\cX^n)\oplus I_{nd(g-1)}$, and hence the entries of of $p(\cX^n)$, contain no commutative cross terms.

Since the entries of $p(\cX^n)$ have no commutative cross terms we know from equation \eqref{eq:pCRpNCRform} that the entries of
$
p^{\mathrm{ncr}} (\cX^n)+p^{\mathrm{cr}} (\cX^n)
$
have no commutative cross terms. If a monomial $w(x)$ is not a cross term, then none of the entries of $w(\cX^n)$ are commutative cross terms. Therefore none of the entries of $p^{\mathrm{ncr}} (\cX^n)$ are commutative cross terms. Since $p(\cX^n)$ has no commutative cross terms this implies that none the entries of $p^{\mathrm{cr}}$ cannot be commutative cross terms. We conclude
$p^{\mathrm{cr}} (\cX^n)=0_{nd \times nd}$
and therefore
\beq
\label{eq:pNCRisp}
p^{\rm ncr} (\cX^n)=p(\cX^n).
\eeq

Equation \eqref{eq:pNCRisp} holds for all $n$, so we obtain that for all $n$ and for all $g$-tuples of $n \times n$ matrices $X$ we have the equality
\beq
\label{eq:pNCRisspEval}
p^\mathrm{ncr} (X)=p(X).
\eeq
Since equation \eqref{eq:pNCRisspEval} holds for all $n$, we conclude 
\beq
\label{eq:rhoEqualsp}
p^\mathrm{ncr} (x)=p(x).
\eeq
Therefore, $p$ has no cross terms, as claimed.

 Now $p$ can be written $p(x)=I+\sum_{i=1}^g p_i (x_i)$ where $p_i(0)=0$. Additionally, since $p$ is invariant under coordinate unitary conjugation
 it follows that $p^2$ defined by 
\beq
\label{eq:coordinvarsquared}
p^2 (x)=I+2\sum_i p_i(x)+\sum_{i,j} p_i (x_i) p_j (x_j)
\eeq
 is also invariant under coordinate unitary conjugation and hence $p^2$
 cannot have any cross terms. Thus equation \eqref{eq:coordinvarsquared}  implies that $p_i(x_i) p_j(x_j)=0$ whenever $i \neq j$. 

Additionally, since $p$ is invariant under coordinate unitary conjugation, given any unitaries $U_i \in M_{n}(\C)$ there exists some unitary $U \in M_{nd}(\C)$ depending only on the $U_i$ such that for any $X \in M_n(\C)^g$ we have
\beq\label{eq:4s1}
p(X_1, \dots X_g)=U^*p(U^*_1 X_1 U_1, \dots , U^*_g X_g U_g) U.
\eeq
It immediately follows that
\beq\label{eq:4s2}
p(X_1, \dots X_g)^*=U^* p(U_1^* X_1 U_1, \dots, U_g^* X_g U_g)^* U.
\eeq
These two equations imply
\beq
\label{eq:coordinvarprods}
\begin{array}{ccc}
p(x)^*p(x)=I+\sum_i p_i(x)+\sum_i p_i(x)^*+\sum_{i,j} p_i (x_i)^* p_j (x_j), \\
p(x)p(x)^*=I+\sum_i p_i(x)+\sum_i p_i(x)^*+\sum_{i,j} p_i (x_i) p_j (x_j)^* \\
\end{array}
\eeq
are also invariant under coordinate unitary conjugation and therefore have no cross terms. Therefore $p_i (x_i) p_j (x_j)^*=p_i (x_i)^* p_j (x_j) =0$ whenever $i \neq j$. 

It follows from Lemma $\ref{lemma:polydirectsumnonsym}$  that there exists a unitary $V \in M_d(\C)$ such that
\[
V^* p(x) V=
 \hat{p}_1 (x_1) \oplus \cdots \oplus
 \hat{p}_g(x_g),
\]
where the $\hat{p}_i$ are monic free matrix polynomials in the variable $x_i$. Thus,  if $p$ 
is invariant under coordinate unitary conjugation, then equation \eqref{eq:invarpoly} holds.

 The converse is straightforward. If \eqref{eq:invarpoly} holds, then evidently $p$ is invariant under coordinate unitary conjugation.
 \end{proof}

\begin{remark}\rm
We say a free spectrahedron $\cD$ is invariant under coordinate unitary conjugation if
 $X\in \cD$ implies $(U_1^* X_1 U_1, \dots , U_g^* X_g U_g) \in \cD$ for all $X \in M_n(\C)^g$ 
and all unitaries $U_1, \dots , U_g \in M_{n}(\C)$.
  Suppose the symmetric monic linear pencil $L_A$ is minimal in defining a free spectrahedron $\cD_{A}$.
 It follows from Theorem \ref{theorem:invarpoly} and  \cite[Theorem 1.2]{HKM13} that $\cD_{A}$ 
is invariant under coordinate unitary conjugation if and only if there is a unitary $U$ so that
\[
U^* L_A(x) U= 
\bigoplus_{j=1}^g \big(I-A_j x_j-A_jx_j^*\big).
\]
\end{remark}

\appendix

\section{Free Circular Matrix Convex Sets are Operator Pencil Balls}\label{sec:FreeCircDomain}
In this section we characterize free circular subsets of $M(\bbF)^g.$  A subset $D \subseteq M(\bbF)^g$ is \df{free circular} if $UX \in \cD$ for each $n$, each $X \in \cD(n)$ and each $n \times n$ unitary matrix $U$.

\subsection{Properties of Free Circular Sets}
\label{sec:FreeCircProperties}
 A free set $\cD  \subseteq M(\bbF)^g$ is an \df{operator pencil ball} if there exists a Hilbert space $\mathcal{H}$ over $\bbF$ and a $g$-tuple $A \in \mathcal{B(H)}^g$ such that $X \in \cD$ if and only if
\[
 \|\Lambda_A(X)\|\le 1.
\]
(Observe that the formulas \eqref{eq:homPencil} -- \eqref{eq:LMIdomain} naturally extend to tuples of operators $A$.)
In particular, an operator pencil ball can be described as the positivity set of the symmetric operator pencil
\[
 \begin{pmatrix} I & \Lambda_A(x) \\ \Lambda_A(x)^* & I \end{pmatrix}.
\]
If  $\mathcal{H}$ is finite dimensional (so $\mathcal{B(H)}^g \cong M_d(\bbF)^g),$ the set  $\cD$ is a \df{matrix pencil ball}.

The main result of this section is 
Theorem \ref{thm:OperatorBall}. It shows that a free circular matrix convex free set containing a neighborhood of $0$   is an operator pencil ball, and is thus
 a free circular analog of the Effros-Winkler matricial 
Hahn-Banach separation theorem \cite{EW,HM12}.\looseness=-1

\begin{lem}\rm
\label{lem:pencilQ}
Suppose $\cC$ is matrix balanced, closed with respect to direct sums and contains $0$ in its interior and  $Q\in M_d(\C)^g.$  If $\|\Lambda_Q(X)\|\le 1$ for $X\in \cC$, then $\|\Lambda_Q(X)\|<1$ for $X$ in the interior of $\cC$. Conversely, if $\|\Lambda_Q(X)\|<1$ for $X$ in the interior of $\cC$, then $\|\Lambda_Q(X)\|\le 1$ for $X\in \cC$. 
\end{lem}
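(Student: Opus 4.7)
The plan is to exploit the homogeneity of $\Lambda_Q$, namely $\|\Lambda_Q(rX)\|=r\|\Lambda_Q(X)\|$ for $r\ge 0$, together with the elementary convex-geometric fact that if $X$ lies in a convex set $K\subset M_n(\C)^g$ whose interior contains $0$, then $rX$ lies in the interior of $K$ for every $0<r<1$, while $rX\in K$ for some $r>1$ whenever $X$ is in the interior of $K$. Classical convexity of each slice $\cC(n)$ comes for free, since the paper already records that matrix balanced plus closed under direct sums implies matrix convex, so each $\cC(n)$ is convex in the ordinary sense; and containing $0$ in the interior (in the graded sense of containing a free $\varepsilon$-neighborhood) gives $0\in\inter \cC(n)$ for every $n$.

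For the first implication, fix $X\in\inter\cC(n)$. The case $X=0$ is trivial, so assume $X\ne 0$. Interiority gives $\delta>0$ with the Euclidean ball $B(X,\delta)\subset \cC(n)$; then for $r>1$ sufficiently close to $1$ we have $(r-1)\bigl(\sum_j\|X_j\|\bigr)<\delta$, hence $rX\in \cC(n)$. The hypothesis $\|\Lambda_Q(rX)\|\le 1$ together with linearity of $\Lambda_Q$ yields $\|\Lambda_Q(X)\|\le 1/r<1$.

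For the converse, fix $X\in \cC(n)$ and $0<r<1$. Choose $\varepsilon>0$ with $B(0,\varepsilon)\subset \cC(n)$. For $Y\in B(rX,(1-r)\varepsilon)$, write $Y=rX+(1-r)w$ with $w=\frac{Y-rX}{1-r}\in B(0,\varepsilon)\subset \cC(n)$; convexity then gives $Y\in \cC(n)$, so $rX\in\inter\cC(n)$. By hypothesis $\|\Lambda_Q(rX)\|<1$, whence $\|\Lambda_Q(X)\|<1/r$. Letting $r\uparrow 1$ gives $\|\Lambda_Q(X)\|\le 1$.

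There is no real obstacle here: the only step requiring care is verifying that $\cC(n)$ is classically convex and that $0$ lies in its topological interior, both of which are immediate consequences of the standing hypotheses. The whole argument is two short scaling computations, and I expect to package them in a few lines once the convexity preliminaries are spelled out.
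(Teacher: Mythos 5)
Your proof is correct. Note that the paper actually states Lemma~\ref{lem:pencilQ} without supplying a proof — it is treated as a routine scaling observation — so there is no written argument to compare against; your proof simply fills in what the authors left implicit, and it does so in the expected way: using homogeneity of $\Lambda_Q$, the fact (recorded just before Proposition~\ref{prop:convexfreecircular}) that matrix balanced plus closed under direct sums implies each slice $\cC(n)$ is classically convex, and the standard convex-geometric lemma that scaling by $r<1$ pulls points of a convex set with $0$ in its interior into the interior, while scaling by $r>1$ slightly still stays in the set when starting from an interior point.
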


\subsection{States and Representations of Separating Linear Functionals}
Let $M_{\ell}(\bbF)_{\rm sa}$  denote self-adjoint elements of $M_{\ell}(\bbF)$ and suppose $\cS$ is a subspace of $M_\ell(\bbF)_{\rm sa}$. 
An \df{affine linear mapping} $f:\cS \to \mathbb R$ is a
 function of the form $f(x)=a_f +\lambda_f(x)$,
 where $\lambda_f:\cS\to \mathbb R$ is linear over $\R$ and $a_f\in\mathbb \R$.   The following
 lemma is a version of   \cite[Lemma 5.2]{EW}. 

\begin{lemma}
 \label{lem:cone}
  Suppose $\mathcal F$ is a convex set of affine linear
  mappings $f:\cS \to \mathbb \R$ and $\cT\subset \cS$ is  compact and  convex.
  If for each $f\in \mathcal F$
  there is a $\hh \in\cT$ such that $f(\hh)\ge 0$,
  then there is a $\hhs\in \cT$ such that
  $f(\hhs)\ge 0$ for every $f\in\mathcal F$.
\end{lemma}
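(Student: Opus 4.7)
Proof plan for Lemma \ref{lem:cone}:

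My plan is to argue by contradiction using a finite reduction and the classical (von Neumann) minimax theorem. Suppose no single $\hhs\in\cT$ works simultaneously for every $f\in\mathcal F$. Then for each $t\in\cT$ there is some $f_t\in\mathcal F$ with $f_t(t)<0$. Because each affine linear $f_t:\cS\to\R$ is continuous, there is an open neighborhood $U_t\subset\cT$ of $t$ on which $f_t<0$. Compactness of $\cT$ gives a finite subcover, so we can fix finitely many $f_1,\ldots,f_n\in\mathcal F$ with the property that for every $t\in\cT$,
\[
\min_{1\le i\le n} f_i(t) \;<\; 0.
\]

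Next, I would work inside the convex hull $\mathcal F_0:=\operatorname{conv}\{f_1,\ldots,f_n\}\subset\mathcal F$, parametrized by the simplex $\Delta_n=\{\lambda\in\R^n_{\ge 0}:\sum_i\lambda_i=1\}$, and consider the bilinear pairing
\[
g:\cT\times\Delta_n\to\R,\qquad g(t,\lambda)=\sum_{i=1}^n \lambda_i\,f_i(t).
\]
The map $g$ is affine (hence concave and convex) in each argument separately, continuous on the compact convex product $\cT\times\Delta_n$, so the von Neumann minimax theorem applies and yields
\[
\min_{\lambda\in\Delta_n}\max_{t\in\cT} g(t,\lambda) \;=\; \max_{t\in\cT}\min_{\lambda\in\Delta_n} g(t,\lambda).
\]

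For the right-hand side, the inner minimum over a simplex of the linear functional $\lambda\mapsto\sum_i\lambda_i f_i(t)$ is attained at a vertex, so $\min_\lambda g(t,\lambda)=\min_i f_i(t)$. Since this is continuous in $t$ and strictly negative at every $t\in\cT$, its maximum over the compact set $\cT$ is strictly negative. Thus there exists $\lambda^\star\in\Delta_n$ with $\max_{t\in\cT} g(t,\lambda^\star)<0$, i.e., the convex combination $\bar f:=\sum_i\lambda^\star_i f_i\in\mathcal F$ satisfies $\bar f(t)<0$ for \emph{every} $t\in\cT$. This directly contradicts the hypothesis that for each $f\in\mathcal F$ there is some $\hh\in\cT$ with $f(\hh)\ge 0$, completing the proof.

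The main obstacle is conceptual rather than computational: one has to realize that the pointwise failure must be promoted to a single separating convex combination, and the cleanest route is the minimax theorem applied after a compactness reduction. The only subtle point is verifying that $\min_i f_i$ attains a strictly negative maximum on $\cT$, which follows from continuity of each $f_i$ plus compactness of $\cT$ (otherwise the max would be zero, forcing some $t^\star\in\cT$ with $\min_i f_i(t^\star)=0$, contradicting our strict inequality at every point).
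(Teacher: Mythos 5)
Your proof is correct, and it arrives at the same pivotal object as the paper's — a single convex combination $\bar f = \sum_i \lambda_i^\star f_i \in \mathcal F$ that is strictly negative on all of $\cT$, contradicting the hypothesis — but via a different route in the final step. The paper first reduces to finitely many $f_1,\dots,f_m$ using the finite intersection property of the compact sets $B_f = \{T\in\cT : f(T)\geq 0\}$, then forms the map $F=(f_1,\dots,f_m):\cT\to\R^m$, observes $F(\cT)$ is a compact convex set disjoint from $\R^m_{\geq 0}$, and applies a separating hyperplane to extract nonnegative weights $\lambda_j$ summing to one. You instead reduce to finitely many $f_i$ via an open cover of $\cT$ (which is the dual formulation of the same compactness step) and then invoke the von Neumann/Sion minimax theorem for the bilinear pairing $g(t,\lambda)=\sum_i\lambda_i f_i(t)$ on $\cT\times\Delta_n$, identifying the weights as a minimizer $\lambda^\star$ of the outer minimum. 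The two finishing arguments are cousins — minimax theorems are typically proved via separation — but yours trades the explicit hyperplane construction for a readily quotable minimax statement, while the paper's is more self-contained (only a finite-dimensional separating hyperplane is needed). One small point to be aware of: in applying minimax you implicitly use that $\lambda\mapsto\max_{t\in\cT}g(t,\lambda)$ is continuous (or lower semicontinuous) so the outer minimum over the compact simplex is attained; this holds because $g$ is jointly continuous and $\cT$ is compact, but it is worth stating.
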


\begin{proof}
 Each $f\in \mathcal F$ is continuous, a fact we will use freely.  For $f\in\mathcal F$, let
 \[
   B_f =\{\hh\in \cT: f(\hh)\ge 0\}\subset \cT.
 \]
  By hypothesis each $B_f$ is non-empty and
  it suffices to prove that
 $$
   \bigcap_{f\in\mathcal F} B_f \neq \emptyset.
 $$
  Since each $B_f$ is compact, it suffices to
  prove that the collection $\{B_f: f\in\mathcal F\}$
  has the finite intersection property.  Accordingly,
  let $f_1,\dots,f_m\in\mathcal F$ be given. Arguing
  by contradiction, suppose
$
  \bigcap_{j=1}^m B_{f_j} =\emptyset.
$
Define $F:\cS \to \mathbb R^m$  by
\[
  F(\hh)=(f_1(\hh),\dots,f_m(\hh)).
\]
 Then $F(\cT)$ 
 is both convex and compact because $\cT$
 is both convex and compact since $F$ is continuous.
 Moreover, 
  $F(\cT)$  does not intersect 
 \[
   \mathbb R^m_{\geq0}=\{x=(x_1,\dots,x_m): x_j\ge 0 \mbox{ for each } j\}.
 \]
 Hence there is a linear functional $\lambda:\mathbb R^m \to \mathbb R$
 such that $\lambda(F(\cT))<0$ and $\lambda(\mathbb R_{\geq0}^m) \ge 0$.
 There exists $\lambda_j$ such that
$
 \lambda(x) = \sum \lambda_j x_j.
$
  Since $\lambda(\mathbb R^m_{\geq0}) \ge 0$ it follows that each
  $\lambda_j\ge 0$ and, since $\lambda\ne 0$, there is a $k$ such that $\lambda_k>0$.  Without loss of generality, 
  it may be assumed that $\sum \lambda_j=1$. 
  Let
\[
  f=\sum \lambda_j f_j.
\]
 Since $\mathcal F$ is convex, 
 it follows that $f\in\mathcal F$. On the other hand,
 $f(T)=\lambda(F(T)).$  Hence 
 if $T\in \cT,$ then  $f(T)<0$.
 Thus, for this $f$ there does not exist
 a $T\in \cT$ such that $f(T)\ge 0$,
 a contradiction which completes the proof.
\end{proof}

\begin{lemma}
 \label{lem:cT}
  Let $\cC=(\cC(n))$ denote a matrix balanced subset of
  the graded set $M(\bbF)^g$ that is closed with respect to direct sums.  Let $n$ and an $\bbF$-linear functional 
  $\mathcal L :M_n(\bbF)^g \to \bbF$ be given. 
  If 
   $
   \operatorname{Re}( \mathcal L (X)) \le 1
  $
   for each $X\in\cC(n)$,
  then there exits positive semidefinite  $n\times n$ matrices  $\mfT_1$ and $\mfT_2$ each of trace norm one
  such that   for each $m$, each $Y\in \cC(m),$ and each pair $C=(\Cr,\Cl)$ of 
  $m\times n$ matrices 
 \[
   2 \operatorname{Re}(\mathcal L (\Cl^{\ast} Y \Cr)) \le \tr(\Cr \mfT_1 \Cr^{\ast}) + \tr(\Cl\mfT_2 \Cl^\ast).
 \]
\end{lemma}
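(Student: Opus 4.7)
The plan is to deduce the lemma from Lemma~\ref{lem:cone}. Take $\cS=M_n(\bbF)_{\rm sa}\oplus M_n(\bbF)_{\rm sa}$, viewed as block diagonal self-adjoint matrices in $M_{2n}(\bbF)_{\rm sa}$, and let $\cT\subset\cS$ be the compact convex set of pairs $(\mathfrak{T}_1,\mathfrak{T}_2)$ with $\mathfrak{T}_j\succeq0$ and $\tr(\mathfrak{T}_j)=1$. For each $m\in\N$, each $Y\in\cC(m)$ and each pair $C_1,C_2$ of $m\times n$ matrices, define the affine linear mapping $f_{Y,C_1,C_2}:\cS\to\R$ by
\[
f_{Y,C_1,C_2}(\mathfrak{T}_1,\mathfrak{T}_2)=\tr(C_1\mathfrak{T}_1 C_1^*)+\tr(C_2\mathfrak{T}_2 C_2^*)-2\operatorname{Re}\bigl(\mathcal L(C_2^* Y C_1)\bigr),
\]
and let $\mathcal F$ denote the collection of all such functionals. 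The conclusion of the lemma is precisely the existence of a single $(\mathfrak{T}_1,\mathfrak{T}_2)\in\cT$ at which every $f\in\mathcal F$ is nonnegative. To apply Lemma~\ref{lem:cone} I need only verify two things: that each individual $f\in\mathcal F$ admits some point of $\cT$ where it is nonnegative, and that $\mathcal F$ is convex.

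For the local existence, fix $f=f_{Y,C_1,C_2}$; the case where $C_1=0$ or $C_2=0$ is immediate since the corresponding trace term and the argument of $\mathcal L$ vanish simultaneously, so assume $C_1,C_2\ne 0$. Since $\cC$ is matrix balanced, the contractions $C_j/\|C_j\|$ yield $(C_2/\|C_2\|)^* Y (C_1/\|C_1\|)\in\cC(n)$, and the hypothesis on $\mathcal L$ forces $\operatorname{Re}\bigl(\mathcal L(C_2^* Y C_1)\bigr)\le\|C_1\|\|C_2\|$. Choose $\mathfrak{T}_j$ to be the rank-one projection onto a unit top eigenvector of $C_j^* C_j$, so that $\tr(C_j\mathfrak{T}_j C_j^*)=\tr(C_j^* C_j\mathfrak{T}_j)=\|C_j\|^2$. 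The AM-GM inequality
\[
2\operatorname{Re}\bigl(\mathcal L(C_2^* Y C_1)\bigr)\le 2\|C_1\|\|C_2\|\le\|C_1\|^2+\|C_2\|^2
\]
then gives $f(\mathfrak{T}_1,\mathfrak{T}_2)\ge 0$.

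The convexity of $\mathcal F$ is the structural heart of the argument. Given $f_i=f_{Y_i,C_{1,i},C_{2,i}}$ with $Y_i\in\cC(m_i)$ and $\lambda_i\ge 0$ summing to one, form $Y=\bigoplus_i Y_i\in\cC(\sum m_i)$ (using that $\cC$ is closed under direct sums) and let $\tilde C_j$ be the vertical stacking of the matrices $\sqrt{\lambda_i}\,C_{j,i}$, so $\tilde C_j\in M_{\sum m_i,n}(\bbF)$. A block computation yields $\tilde C_j^*\tilde C_j=\sum_i\lambda_i C_{j,i}^* C_{j,i}$ and $\tilde C_2^* Y\tilde C_1=\sum_i\lambda_i C_{2,i}^* Y_i C_{1,i}$, and therefore $\sum_i\lambda_i f_i=f_{Y,\tilde C_1,\tilde C_2}\in\mathcal F$. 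An application of Lemma~\ref{lem:cone} then produces the desired uniform pair $(\mathfrak{T}_1,\mathfrak{T}_2)\in\cT$. I expect the convexity step to be the main conceptual obstacle, since it is the place where the matrix convex structure (closure under direct sums) of $\cC$ must be exploited to repackage a convex combination of functionals indexed by triples as a single functional of the same form; the local-existence step is routine AM-GM together with the matrix balancedness hypothesis.
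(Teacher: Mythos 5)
Your proof is correct and follows essentially the same route as the paper: define the family $\mathcal F$ of affine functionals on pairs of trace-one positive semidefinite matrices, verify convexity by direct-summing the $Y_i$ and stacking the weighted $C_{j,i}$, verify pointwise positivity via AM--GM together with rank-one top-eigenvector projections, and invoke Lemma~\ref{lem:cone}. The only difference worth noting is that you index $\mathcal F$ by \emph{arbitrary} $m\times n$ matrices $C_1,C_2$ (obtaining the bound $\operatorname{Re}\mathcal L(C_2^*YC_1)\le\|C_1\|\|C_2\|$ directly from matrix balancedness applied to $C_j/\|C_j\|$), whereas the paper restricts $\mathcal F$ to contractions and then removes that restriction only later, in the proof of Proposition~\ref{prop:2sharp}, by a scaling argument; your version therefore proves the lemma exactly as stated in a slightly more self-contained way, but the underlying argument is the same.
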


\begin{proof}
   Let $\ell=2n$ and let 
\[
  \cT =\{T= T_1\oplus T_2 : T_j \in M_n(\bbF)_{\rm sa}, \, T_j\succeq 0, \mbox{ and } \tr(T_j)= 1\}.
\]
 In particular $\cT$ is a compact convex subset of the $\ell\times\ell$ matrices. 
 
 Given a positive integer $m,$ a  tuple $Y$ in $\cC(m)$
 and  $m\times n$ contraction 
matrices $\Cr,\Cl$, define
 $f_{Y,C}:M_n(\bbF)_{\rm sa} \oplus \mathbb M_n(\bbF)_{\rm sa} \to \mathbb R$ by 
\[
  f_{Y,C}(T_1\oplus T_2)=\sum_{j=1}^2 \tr(C_j T_j C_j^{\ast})  - 2 \operatorname{Re}(\mathcal L (\Cl^{\ast}Y\Cr)).
\]

  Now we show that the collection
  \[
  \mathcal F=\{f_{Y,C}:Y \in \cC(m),  \ C=(C_1,C_2) \ \mathrm{where} \ C_1,C_2 \in \bbF^{m \times n} \ \mathrm{are \ contractions\ and \ }  m,n \in \mathbb{N} \}
  \]
 is
  a convex set. Start with a positive integer $s,$ 
  nonnegative numbers $\lambda_1,\dots,\lambda_s$ with
  $\sum \lambda_j=1,$ and with $(Y_j,C_{j,1},C_{j,2})$ 
  for $j=1,\dots,s$ where $Y_j\in\cC(m_j)$ and $C_{j,p}$ are $m_j\times n$
  contraction matrices. Let $Z=\oplus Y_j$
  and let $F_p$ denote the (block) column matrix with entries
  $\sqrt{\lambda_j} C_{j,p}$. Then $Z\in \cC(m)$ where $m=\sum m_j$ and
\[
  F_p^*F_p =\sum \lambda_j C_{j,p}^*C_{j,p} \preceq \sum \lambda_j I =I.
\]
 Hence each $F_p$ is a contraction.
 By definition,
\[
\begin{split}
 \sum \lambda_j C_{j,2}^* Y_j C_1 & = F_2^* ZF_1,\qquad
 \sum \lambda_j \tr(C_{j,p} T_p C_{j,p}^*) = \tr(F_p T_pF_p^*).
 \end{split}
\]
 Therefore
\[
  \sum \lambda_j f_{Y_j,C_j}(\hh) = f_{Z,F}(\hh)
\]
so $\mathcal{F}$ is convex.

 Observe, for any $X\in \cC$ and  pair of matrices $\Cl$ and $\Cr$  (of the appropriate sizes) $\operatorname{Re} \cL(\Cl^* X \Cr) \le \|\Cl\|\,\|\Cr\|$. Now  let $f_{Y,C} \in\mathcal F$ be given.  Choose unit vectors $\gamma_j$ such that
\[
 \|C_j \gamma_j \| =\|C_j\|,
\]
  let $T_j=\gamma_j^* \gamma_j$ and finally $T=T_1\oplus T_2$. With these  notations,
\[
2 \operatorname{Re}(\cL( \Cl^{\ast} Y \Cr) \le 2\|C_1\|\, \|C_2\| \le \|C_1\|^2 +\|C_2^2\| =\tr(C_1\gamma_1\gamma_1^* C_1) + \tr(C_2\gamma_2 \gamma_2^* C_2^*)
\]
and thus,
\[
 f_{Y,C}(T) = \sum_{j=1}^2\tr(C_j \gamma_j\gamma_j^* C_j^*) - 2 \operatorname{Re}(\mathcal L (\Cl^{\ast} Y \Cr)) \ge 0.
\]

 Consequently, for each $f_{Y,C}$ there is a $T\in\cT$ such
 that $f_{Y,C}(T)\ge 0$. From 
 Lemma \ref{lem:cone}, there is a $\hhs \in \cT$ such that
 $f_{Y,C}(\hhs)\ge 0$ for every $Y$ and $C$. 
\end{proof}

\subsection{An Effros-Winkler Theorem for Free Circular Matrix Convex Sets}
 
 In this section we present the effective version of \cite[Proposition 3.5]{BVM}, i.e.,
Proposition \ref{prop:sharp}, restated here for the convenience of the reader as
  Proposition \ref{prop:2sharp}.

\begin{prop}
 \label{prop:2sharp}
 Let $\cC=(\cC(n))$ denote a  matrix balanced
 subset of the graded set  $M(\bbF)^g$ that 
 contains a free $\epsilon$-neighborhood of $0$ and is closed with respect to direct sums.
 If $\oX\in M_n(\bbF)^g$ is in the boundary of $\cC(n)$,
 then there is a tuple $Q\in M_n(\bbF)^g$ such that 
 $\|\Lambda_Q(Y)\|\le 1$ for all $m$ and
 $Y\in\cC(m)$  and such that  $\|\Lambda_Q(\oX)\|=1$. 
 Furthermore,
 if $Y$ is in the interior of $\cC$, then $\|\Lambda_Q(Y)\|<1$.
\end{prop}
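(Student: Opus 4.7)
The plan is to separate $\oX$ from $\cC(n)$ by a scalar functional via Hahn-Banach, promote the resulting inequality to a trace-weighted bilinear estimate using Lemma \ref{lem:cT}, and then convert this into an operator-pencil norm bound by rebalancing through the square roots of the $\mfT_i$. The strict inequality on the interior will follow immediately from Lemma \ref{lem:pencilQ}, so my focus will be on producing $Q$ with $\|\Lambda_Q(Y)\|\le1$ for $Y\in\cC$ and $\|\Lambda_Q(\oX)\|=1$.

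First, I would produce the separating functional. Since $\cC$ is matrix balanced, Proposition \ref{prop:convexfreecircular} shows $\cC$ is matrix convex and free circular, and the free $\epsilon$-neighborhood hypothesis ensures each $\cC(n)$ has nonempty interior. Hahn-Banach separation at level $n$ then produces an $\bbF$-linear functional $\cL\colon M_n(\bbF)^g\to\bbF$ with $\operatorname{Re}\cL(Z)\le1$ for $Z\in\cC(n)$ and $\operatorname{Re}\cL(\oX)=1$; applying the bound to the rotation $e^{it}Z\in\cC(n)$ and maximizing over $t\in\R$ upgrades it to $|\cL(Z)|\le 1$ on $\cC(n)$, whence $|\cL(\oX)|=1$, and a unimodular rotation of $\cL$ yields $\cL(\oX)=1$. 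Feeding $\cL$ into Lemma \ref{lem:cT} produces positive semidefinite $n\times n$ matrices $\mfT_1,\mfT_2$ of trace one satisfying, for every $m$, $Y\in\cC(m)$, and $m\times n$ pair $C_1,C_2$,
\[
2\operatorname{Re}\cL(C_2^*YC_1)\le\tr(C_1\mfT_1C_1^*)+\tr(C_2\mfT_2C_2^*).
\]
Rotating $C_1$ by a unimodular phase and then optimizing $(C_1,C_2)\mapsto(tC_1,t^{-1}C_2)$ by AM-GM will sharpen this to the bilinear Cauchy-Schwarz bound
\[
|\cL(C_2^*YC_1)|\le\|C_1\mfT_1^{1/2}\|_{\mathrm{HS}}\,\|C_2\mfT_2^{1/2}\|_{\mathrm{HS}}.
\]

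Second, writing $\cL(X)=\sum_j\tr(P_j^*X_j)$ by Riesz with $P\in M_n(\bbF)^g$, I would define the pencil via
\[
Q_j:=\mfT_2^{+1/2}\,P_j\,\mfT_1^{+1/2},
\]
where ${}^{+1/2}$ denotes the Moore-Penrose pseudoinverse of $\mfT_i^{1/2}$. Under the identification of $\bbF^n\otimes\bbF^m$ with $\bbF^{n\times m}$ in which $A\otimes B$ acts by $U\mapsto AUB^*$, a short calculation shows $\cL(C_2^*YC_1)=\overline{\langle\Lambda_P(Y)C_1^*,C_2^*\rangle_{\mathrm{HS}}}$ and
\[
\langle\Lambda_Q(Y)\alpha,\beta\rangle_{\mathrm{HS}}=\langle\Lambda_P(Y)\mfT_1^{+1/2}\alpha,\mfT_2^{+1/2}\beta\rangle_{\mathrm{HS}}.
\]
Applying the bilinear Cauchy-Schwarz bound above to the right-hand side and using $\mfT_i^{1/2}\mfT_i^{+1/2}=\pi_i$, the orthogonal projection onto $\operatorname{ran}\mfT_i$, dominates the pairing by $\|\pi_1\alpha\|_{\mathrm{HS}}\|\pi_2\beta\|_{\mathrm{HS}}\le\|\alpha\|_{\mathrm{HS}}\|\beta\|_{\mathrm{HS}}$, giving $\|\Lambda_Q(Y)\|\le 1$ for every $Y\in\cC$.

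Third, to obtain $\|\Lambda_Q(\oX)\|=1$ I would test against the Hilbert-Schmidt unit vectors $\alpha=\mfT_1^{1/2}$, $\beta=\mfT_2^{1/2}$: the above pairing reduces to $\overline{\cL(\pi_2\oX\pi_1)}$. Splitting $\oX=\pi_2\oX\pi_1+(I-\pi_2)\oX\pi_1+\pi_2\oX(I-\pi_1)+(I-\pi_2)\oX(I-\pi_1)$, the three off-diagonal summands each have vanishing $\cL$-value by the bilinear Cauchy-Schwarz applied with $(C_1,C_2)$ chosen as $(I-\pi_1,I)$, $(I,I-\pi_2)$, and $(I-\pi_1,I-\pi_2)$ respectively, since $(I-\pi_i)\mfT_i^{1/2}=0$. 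Therefore $\cL(\pi_2\oX\pi_1)=\cL(\oX)=1$, hence $|\langle\Lambda_Q(\oX)\alpha,\beta\rangle_{\mathrm{HS}}|=1=\|\alpha\|_{\mathrm{HS}}\|\beta\|_{\mathrm{HS}}$ and $\|\Lambda_Q(\oX)\|\ge1$; combined with the upper bound this is equality, and Lemma \ref{lem:pencilQ} promotes the upper bound to strict inequality on the interior. The main obstacle is handling the singular case of the $\mfT_i$; the resolution is precisely the corner-decomposition above, whose three vanishings, which rest only on $(I-\pi_i)\mfT_i^{1/2}=0$ and the bilinear Cauchy-Schwarz, ensure that the test pair $(\mfT_1^{1/2},\mfT_2^{1/2})$ witnesses the norm-one lower bound.
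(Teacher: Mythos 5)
Your proof is correct and follows the paper's overall strategy (separate by a scalar Hahn--Banach functional, feed it to Lemma~\ref{lem:cT} to get trace-one $\mfT_1,\mfT_2$, then rebalance by the $\mfT_i^{1/2}$ to obtain a pencil bound), but it executes the final rebalancing step more carefully than the paper does. The paper computes the $B_\ell$ directly from the bilinear form $\cL(cd^*\otimes\be_\ell)$, passes to the block pencil $\Phi$, and then handles the possible singularity of the $T_p$ with the remark that one may pass to subspaces of smaller dimension; the verification that this restriction preserves both the upper bound and the attainment at $\oX$ is left implicit. Your version replaces that step with Moore--Penrose pseudoinverses $\mfT_i^{+1/2}$ and a corner decomposition $\oX=\pi_2\oX\pi_1+\cdots$, whose cross-terms are killed by the bilinear Cauchy--Schwarz bound because $(I-\pi_i)\mfT_i^{1/2}=0$; this is a cleaner and fully explicit resolution of the degenerate case, and it is a genuine improvement on the paper's wording. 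Your intermediate upgrades (from $\operatorname{Re}\cL\le1$ to $|\cL|\le1$ via circularity, and from the trace inequality to the bilinear Cauchy--Schwarz bound via AM--GM and phase optimization) are correct, though the first is actually unnecessary: the relation $\overline{\langle\Lambda_Q(\oX)\mfT_1^{1/2},\mfT_2^{1/2}\rangle_{\mathrm{HS}}}=\cL(\oX)$ together with the already-proved $\|\Lambda_Q(\oX)\|\le1$ and $\operatorname{Re}\cL(\oX)=1$ already forces $|\cL(\oX)|=1$. Finally, for the strict inequality on the interior you cite Lemma~\ref{lem:pencilQ}, which the paper states without proof; the paper's own proof of the proposition instead includes the one-line scaling argument ($t>1$ implies $\|\Lambda_Q(tY)\|>1$, so $tY\notin\cC$), and you should include that argument rather than lean on the unproved lemma.
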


\def\beq{\begin{equation}
}
\def\eeq{\end{equation}}

\begin{proof}
 By the usual Hahn-Banach separation theorem
 and the assumption that $\cC(n)$ contains
 an $\epsilon$-neighborhood of $0$,
 there is a linear functional $\cL: M_n(\bbF)^g \to \bbF$
 such that $\operatorname{Re}(\cL(\oX)) =  1 \ge  \operatorname{Re}(\cL(\cC(n)))$.
 
 From Lemma \ref{lem:cT} there exists positive semidefinite $n\times n$ matrices $T_1$ and $T_2$ of  trace norm one such that
$\sum_{p=1}^2 \tr(C_p T_p C_p^{\ast})- 2 \operatorname{Re}(\cL(\Cl^{\ast}Y\Cr)) \ge 0$  for each $m$, each pair of $m\times n$ contractions $\Cr,\Cl$, and each $Y\in\cC(m)$. Hence, by homogeneity,   for each $m$, each pair of $m\times n$ matrices $\Cl,\Cr$, and each $Y\in\cC(m)$,
\begin{equation}
\label{eq:absLMI}
 \sum_{p=1}^2 \tr(C_p T_p C_p^{\ast})- 2\operatorname{Re}(\cL(\Cl^{\ast}Y\Cr)) \ge 0
\end{equation}
 Note this inequality is sharp in the sense,
\begin{equation}
\label{eq:absLMI0}
\sum_{p=1}^2 \tr(T_p)- 2\operatorname{Re}(\cL(\oX)) = 0.
\end{equation}

 Let $\{\be_1,\dots,\be_g\}$ 
  denote the
 standard orthonormal basis for $\bbF^g.$ Thus, if $M$ is an $n\times n$ matrix, then $M\otimes \be_\ell=(M_1,\dots,M_g)\in M_n(\bbF)^g$ is the $g$-tuple with $M_j=0$ for $j\ne \ell$ and $M_\ell=M$. 
 Given $1\le \ell \le g$, define
 a bilinear form on $\bbF^n$ by
\[
  \cB_\ell(c,d)=\cL(cd^{\ast}\otimes \be_\ell)
\]
  for  $c,d\in\bbF^n$.
  There is a unique $n\times n$ matrix $B_\ell$  such that
$
  \cB_\ell(c,d)=\langle B_\ell c, d\rangle.
$

 Let $\Lambda_B$ denote the linear polynomial  $\Lambda_B(x)=\sum_1^g B_j x_j$.
 Fix a positive integer $m$ and let 
 $\{e_1,\dots,e_m\}$ denote the standard orthonormal basis for
  $\bbF^m$. 
 Let $Y=(Y_1,\dots,Y_g)\in\cC(m)$ be given and consider
  $\La_B(Y)$.
 Given vectors 
 $\gamma_p=\sum_{j=1}^m  \gamma_{p,j} \otimes e_j,$ for $p=1,2$,
 contained in $\bbF^n\otimes \bbF^m$, compute
\[
 \begin{split}
 \langle \Lambda_B(Y)\gamma_2,\gamma_1 \rangle
  & = \sum_{i,j}\sum_\ell \langle  B_\ell\gamma_{2,j},
           \gamma_{1,i}\rangle \langle Y_\ell e_j,e_i\rangle 
  =  \sum_{i,j} \sum_\ell \cL \big( \gamma_{2,j} \gamma_{1,i}^{\ast} \otimes \be_\ell \big)
                \langle Y_\ell e_j,e_i\rangle  \\
  & =  \cL \big( \sum_{\ell}  \sum_{i,j} \gamma_{2,i} \langle Y_\ell e_j,e_i\rangle \gamma_{1,j}^\ast \, 
            \otimes \be_\ell \big) 
  =   \cL \big( \sum_{\ell} \Gamma_2 Y_\ell \Gamma_1^{\ast}\otimes \be_\ell \big) 
   =  \cL(\Gamma_2  Y \Gamma_1^{\ast}),
 \end{split}
\]
 where  $\Gamma_p$ is the matrix with $j$-th column $\gamma_{p,j}$. 
 Using equation \eqref{eq:absLMI},
\[
 \begin{split}
 2 \operatorname{Re}(\cL(\Gamma_2 Y \Gamma_1^{\ast})) & \le  \tr(\Gamma_1^{\ast} T_1 \Gamma_1) +\tr(\Gamma_2^* T_2 \Gamma_2)  
  =  \sum_{p=1}^2 \sum_{j=1}^m \langle T_p \gamma_{p,j},\gamma_{p,j} \rangle  \\
   &= \sum_{p=1}^2  \langle (T_p\otimes I) \sum_j \gamma_{p,j}\otimes e_j,
       \sum_k \gamma_{p,k} \otimes e_k \rangle 
   = \sum_{p=1}^2 \langle (T_p\otimes I) \gamma_p,\gamma_p\rangle.
 \end{split}
\] 
 Thus, 
\begin{equation}
\label{eq:Tlmi}
 \Phi(Y) =\begin{pmatrix} T_1\otimes I & -\Lambda_B(Y) \\ -\Lambda_B(Y)^* & T_2\otimes I \end{pmatrix} \succeq 0
 \end{equation}
 for every $m$ and $Y\in\cC(m)$.

 Since $\cC$ contains the $\epsilon$-neighborhood of $0$,
  it contains $\pm \frac{\epsilon}{2} {\be_j}\in \bbF^g.$
  Hence, for each $j$,  
 \[
 \begin{split}
      0 & \preceq \Phi(\pm \frac{\epsilon}{2}{\be_j})  
      =  \begin{pmatrix}  T_1 & \pm \Lambda_B(\frac{\epsilon}{2}\be_j) \\ \pm \Lambda_B(\frac{\epsilon}{2} \be_j)^* & T_2 \end{pmatrix}   
      =  \begin{pmatrix}  T_1 & \pm \frac{\epsilon}{2} B_j \\ \pm \frac{\epsilon}{2} B_j^* &  T_2 \end{pmatrix}.
 \end{split}
 \] 
 Thus,  while the  $T_p$ need not be invertible, 
  it can be assumed (passing to subspaces of smaller dimension
  if necessary) that they are invertible. 
  Finally,
  multiplying left and right by $\oplus T_p^{-\frac12}$ produces the 
  linear polynomial  $\Lambda_Q(x) = \sum_j Q_j x_j$ (with $Q_j = T_1^{-\frac12} B_j T_2^{-\frac12}$)
  such that, with $\Psi$ denoting the monic symmetric linear pencil
\[
 \Psi(x) = \begin{pmatrix} I & -\Lambda_Q(x) \\ -\Lambda_Q(x)^* & I \end{pmatrix},
\]
 $\Psi(Y)\succeq 0$ if
  and only if $\Phi(Y) \succeq 0$. In particular, $\Psi$ is positive definite on $\cC$. Equivalently, $\|\Lambda_Q(Y)\|\le 1$ for all $Y\in \cC$.

 On the other hand,  computing as above, \eqref{eq:absLMI0}  becomes
\[
 \begin{split}
  \langle \Phi(\oX) e\oplus e,e\oplus e \rangle & =  \sum_{p=1}^2 \tr(T_p) - 2 \operatorname{Re}(\langle \Lambda_B(\oX)e,e\rangle ) \\
   &=  2-2 \sum_{\ell} \sum_{j,k} \operatorname{Re}( \langle B_\ell e_j,e_k \rangle \, \langle \oX_\ell e_j, e_k\rangle)\\
   & = 2 - 2 \sum_{\ell,j,k}\operatorname{Re}( \cL(e_je_k^*\otimes \be_\ell) \, \langle \oX_\ell e_j, e_k\rangle) \\
   &= 2 -2 \operatorname{Re} \Big(\cL [\sum_{\ell,j,k}  \langle \oX_\ell e_j, e_k\rangle\, (e_je_k^*\otimes \be_\ell)] \Big) = 2- 2 \operatorname{Re}(\cL(\oX)) =0,
 \end{split}
\]
 where  $e=\sum e_j\otimes e_j$.
 Since $\oX$ is in $\cC(n)$, it follows that
  $\Phi(\oX)\succeq 0$.
  Thus $\Phi(\oX)(e \oplus e)=0$,
  and since $(T_p \otimes I) e \ne 0,$ it follows
  that $\Psi(\oX)$ is singular too.  In particular, $\|\Lambda_Q(\oX)\|=1$.

 Finally, suppose $Y\in\cC$ and  $\|\Lambda_Q(Y)\|=1$. If $t>1$, then $\|\Lambda_Q(tY)\|>1$ and hence $tY\notin \cC$. Thus $Y$ is in the boundary of $\cC$. Hence  if $Y$ is in the interior of $\cC$, then $\|\Lambda_Q(Y)\|<1$.
\end{proof}

\begin{thm}[cf.~\protect{\cite[Proposition 3.5]{BVM}}]
\label{thm:OperatorBall}
 If $\cC \subseteq M(\bbF)^g$ is a closed matrix balanced, closed with respect to direct sums  and  $\cC$ contains a free $\epsilon$-neighborhood of $0,$ then 
$\cC$ is an operator pencil ball.
\end{thm}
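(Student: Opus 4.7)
The plan is to realize $\cC$ as an operator pencil ball by assembling the separating matrix pencils furnished by Proposition \ref{prop:2sharp} into a single operator pencil via direct sum. Concretely, for each $n$ and each $\oX\in\partial\cC(n)$, Proposition \ref{prop:2sharp} yields $Q^{\oX}\in M_n(\bbF)^g$ with $\|\Lambda_{Q^{\oX}}(Y)\|\le 1$ for all $Y\in\cC$ and $\|\Lambda_{Q^{\oX}}(\oX)\|=1$. The free $\epsilon$-neighborhood hypothesis provides a key uniform bound: since $\pm\tfrac{\epsilon}{2}\be_j\in\cC$, the inequality $\|\Lambda_{Q^{\oX}}(\pm\tfrac{\epsilon}{2}\be_j)\|\le 1$ forces $\|Q^{\oX}_j\|\le 2/\epsilon$, independently of $n$ and of $\oX$. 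Because each $M_n(\bbF)^g$ is separable, I fix a countable dense subset $\{X^{(k)}\}_k$ of $\bigcup_n\partial\cC(n)$, write $Q^{(k)}=Q^{X^{(k)}}$ of size $n_k$, set $\cH=\bigoplus_k\bbF^{n_k}$, and define $A_j=\bigoplus_k Q^{(k)}_j\in\mathcal{B}(\cH)$, so that $\|A_j\|\le 2/\epsilon$. Then $\Lambda_A(X)=\bigoplus_k\Lambda_{Q^{(k)}}(X)$, hence $\|\Lambda_A(X)\|=\sup_k\|\Lambda_{Q^{(k)}}(X)\|$ for every tuple $X$.

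It then remains to verify $\cC=\{X:\|\Lambda_A(X)\|\le 1\}$. One inclusion is immediate: if $X\in\cC$, each $\Lambda_{Q^{(k)}}(X)$ has norm at most one, hence so does $\Lambda_A(X)$. For the reverse, take $X\in M_n(\bbF)^g\setminus\cC$. Since $\cC(n)$ is convex with $0\in\inter\cC(n)$, there is $t\in(0,1)$ with $tX\in\partial\cC(n)$. Given any $\delta\in(0,1-t)$, density allows me to choose some $X^{(k)}\in\partial\cC(n)$ sufficiently close to $tX$; the uniform bound $\|Q^{(k)}_j\|\le 2/\epsilon$ makes $Y\mapsto \Lambda_{Q^{(k)}}(Y)$ Lipschitz with a constant independent of $k$, so $\|\Lambda_{Q^{(k)}}(tX)\|\ge \|\Lambda_{Q^{(k)}}(X^{(k)})\|-\delta = 1-\delta$. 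Homogeneity then gives $\|\Lambda_A(X)\|\ge\|\Lambda_{Q^{(k)}}(X)\|\ge (1-\delta)/t>1$, as desired.

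The principal obstacle — and the reason the $\epsilon$-neighborhood hypothesis is indispensable — is ensuring that the direct-sum operator $A$ is \emph{bounded}. Without the uniform estimate on $\|Q^{(k)}_j\|$, the identity $\|\Lambda_A(X)\|=\sup_k\|\Lambda_{Q^{(k)}}(X)\|$ would be problematic and the uniform Lipschitz control needed to pass from a nearby boundary point $X^{(k)}$ to the target $tX$ would break down; the estimate $\|Q^{\oX}_j\|\le 2/\epsilon$ extracted from evaluating the separating pencils at $\pm\tfrac{\epsilon}{2}\be_j$ is precisely what renders the entire construction well-posed.
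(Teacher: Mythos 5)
Your proposal is correct and follows essentially the same route as the paper: a countable dense set of boundary points at each level, a separating pencil for each from Proposition \ref{prop:2sharp}, a direct sum made bounded by the uniform estimate coming from the free $\epsilon$-neighborhood (the paper's Lemma \ref{lem:Qbounded}), and the reverse inclusion via scaling to the boundary plus density and the uniform Lipschitz bound. The only cosmetic difference is that the paper first concludes $\|\Lambda_Q(X)\|=1$ on all of $\partial\cC$ by continuity and then scales, whereas you interleave the approximation with the homogeneity step.
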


\begin{lemma}
 \label{lem:Qbounded}
   If $\cC \subseteq M(\bbF)^g$ is a matrix convex set and if $\cC(1)$ contains $0$ in its interior, then there exists a constant $\kappa$ such that if $Q\in M(\bbF)^g$ and $\|\Lambda_Q(X)\|\le 1$ for all $X\in \cC$, then $\|Q_j\|\le \kappa$ for each $1\le j\le g$.
\end{lemma}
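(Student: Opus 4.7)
The plan is to exploit the fact that $\cC(1) \subseteq \bbF^g$ contains a Euclidean ball around the origin, and to test the hypothesized norm bound on $\Lambda_Q$ against scalar tuples supported on a single coordinate axis. Matrix convexity does not enter the argument at all; only the scalar interior point condition is used.

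Concretely, I would first fix $\epsilon > 0$ with the property that every $x \in \bbF^g$ with $\|x\| < \epsilon$ lies in $\cC(1)$; this is possible by the hypothesis that $0$ is an interior point of $\cC(1)$. Then, for each $1 \le j \le g$ and any $0 < t < \epsilon$, the scalar tuple $x^{(j)} = t\, e_j$ (where $e_j$ is the $j$-th standard basis vector of $\bbF^g$) belongs to $\cC(1) \subseteq \cC$. At such a point the homogeneous pencil collapses to
\[
\Lambda_Q(x^{(j)}) \;=\; \sum_{k=1}^g Q_k\, (x^{(j)})_k \;=\; t\, Q_j \;\in\; M_d(\bbF),
\]
so the standing assumption $\|\Lambda_Q(X)\| \le 1$ on all of $\cC$ yields $t\,\|Q_j\| \le 1$, i.e., $\|Q_j\| \le 1/t$.

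Finally, taking for instance $t = \epsilon/2$ furnishes the uniform bound $\|Q_j\| \le 2/\epsilon$, independent of the size $d$ of $Q_j$ and of the particular tuple $Q$, so one may set $\kappa = 2/\epsilon$. There is no real obstacle: the whole argument is a one-line scalar evaluation of $\Lambda_Q$ on admissible coordinate directions, and the constant $\kappa$ depends only on how large a neighborhood of $0$ the set $\cC(1)$ contains.
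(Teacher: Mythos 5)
Your argument is correct and is essentially identical to the paper's proof: both evaluate $\Lambda_Q$ at a small scalar multiple of a coordinate basis vector $\be_j \in \cC(1)$ to extract $t\,\|Q_j\| \le 1$, and take $\kappa$ depending only on the radius of the interior ball. Your observation that matrix convexity plays no role is accurate and matches the paper, which likewise never invokes it in this proof.
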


\begin{proof}
 Let $\{\be_j:1\le j\le g\}$ denote the standard basis for $\bbF^g$.  By hypothesis, there is an $\epsilon>0$ such that the tuple $\epsilon \be_j \in \cD_Q(1)$. Hence, $1\ge \|\Lambda(\epsilon \be_j)\| = \epsilon \|Q_j\|$. Choosing $\kappa =\frac{1}{\epsilon}$ completes the proof.
\end{proof}

\begin{proof}[Proof of Theorem~\ref{thm:OperatorBall}]
For a fixed $n$, choose a countable set $K(n)\subseteq \partial C(n)$ with $\overline{K(n)}=\partial C(n)$. By assumption $\cC$ contains a free $\epsilon$-neighborhood of $0$, so Proposition \ref{prop:sharp} implies that for each $X \in K(n)$ there exists a tuple $Q_X\in M_n(\bbF)^g$ such that 
 $\|\Lambda_{Q_X} (Y)\|\le 1$ for all $m$ and
 $Y\in\cC(m)$  and such that  $\|\Lambda_{Q_X} (X)\|=1$. 

Set $K=\bigcup_n K(n)$ and define $Q=\bigoplus_{X \in K} Q_X
$. Since $K(n)$ is countable for each $n$, it follows that $K$ is also countable. Furthermore, 
$Q$ is a bounded operator by Lemma \ref{lem:Qbounded}.
 We will show $\cC=\{X \in M(\bbF)^g: \|\Lambda_Q(X)|| \leq 1 \}$.

By construction, $\|\Lambda_{Q_X} (Y)\| \leq 1$ for all $Y \in \cC.$ 
Hence $\cC \subseteq \{X\in M(\bbF)^g: \|\Lambda_Q(X)\| \le 1\}$. Moreover,
if $X \in K,$ then $\|\Lambda_{Q_X} (X) \|=1$.  Since $K$ is dense in $\partial \cC$ and $\Lambda_Q$ is continuous,  $\| \Lambda_Q (X) \|=1$ for all $X \in \partial \cC$.

Finally, suppose $Y \notin \cC$. Since $\cC$ contains a free $\epsilon$-neighborhood of $0$ there exists some  $t \in (0,1)$ such that $tY \in \partial \cC$. It follows that $\|\Lambda_Q (tY)\|=1$ and hence $\|\Lambda_Q (Y) \|=\frac{1}{t} >1$. Thus $\cC \supseteq\{X \subseteq M(\bbF)^g:\| \Lambda_Q (X) \|\leq 1\}$ and therefore, $\cC$ is the  operator pencil ball $\{X: \| \Lambda_Q(X)\|\le 1\}$.
\end{proof}

\end{document}